\crefname{theorem}{Theorem}{Theorems}
\crefname{lemma}{Lemma}{Lemmas}
\crefname{corollary}{Corollary}{Corollaries}
\crefname{proposition}{Proposition}{Propositions}
\crefname{conjecture}{Conjecture}{Conjectures}
\crefname{question}{Question}{Questions}
\crefname{definition}{Definition}{Definitions}
\crefname{example}{Example}{Examples}
\crefname{remark}{Remark}{Remarks}
\crefname{question}{Question}{Questions}
\crefname{enumi}{}{}
\crefname{equation}{}{}
\numberwithin{equation}{section}
\numberwithin{figure}{section}
  \theoremstyle{plain}
  \newtheorem{theorem}{\protect\theoremname}[section]
  \newtheorem{lemma}[theorem]{\protect\lemmaname}
  \newtheorem{proposition}[theorem]{\protect\propositionname}
	\newtheorem{question}[theorem]{\protect\questionname}
	\theoremstyle{definition}
	\theoremstyle{remark}
  \newtheorem{remark}[theorem]{\protect\remarkname}
  \theoremstyle{plain}
	\newtheorem*{theorem*}{\protect\theoremname}
  \newtheorem*{lemma*}{\protect\lemmaname}
  \newtheorem*{proposition*}{\protect\propositionname}
  \newtheorem*{corollary*}{\protect\corollaryname}
	\newtheorem{conjecture*}{\protect\conjecturename}
	\newtheorem{question*}{\protect\questionname}
	\theoremstyle{definition}
  \newtheorem*{definition*}{\protect\definitionname}
  \theoremstyle{remark}
  \newtheorem*{remark*}{\protect\remarkname}
	\providecommand{\theoremname}{Theorem}
  \providecommand{\lemmaname}{Lemma}
  \providecommand{\propositionname}{Proposition}
  \providecommand{\corollaryname}{Corollary}
  \providecommand{\conjecturename}{Conjecture}
	\providecommand{\questionname}{Question}
	\providecommand{\definitionname}{Definition}
  \providecommand{\remarkname}{Remark}
\global\long\def\dee{\mathrm{d}}
\global\long\def\DJ{D_J}
\global\long\def\DJad{D^*_J}
\global\long\def\RJ{R_J}
\DeclareFontFamily{U}{MnSymbolC}{}
\DeclareSymbolFont{MnSyC}{U}{MnSymbolC}{m}{n}
\DeclareFontShape{U}{MnSymbolC}{m}{n}{
    <-6>  MnSymbolC5
   <6-7>  MnSymbolC6
   <7-8>  MnSymbolC7
   <8-9>  MnSymbolC8
   <9-10> MnSymbolC9
  <10-12> MnSymbolC10
  <12->   MnSymbolC12}{}
\DeclareMathSymbol{\intprod}{\mathbin}{MnSyC}{'270}
\global\long\def\cE{\mathcal{E}}
\global\long\def\ka{\mbox{\large{$\kappa$}}}
\global\long\def\oneb{\bar{1}}
\global\long\def\Ab{\bar{A}}
\global\long\def\Bb{\bar{B}}
\global\long\def\thetah{\hat{\theta}}
\global\long\def\nablah{\hat{\nabla}}
\global\long\def\btheta{\boldsymbol{\theta}}
\global\long\def\bh{\boldsymbol{h}}
\global\long\def\Levi{\boldsymbol{L}}
\global\long\def\ups{\upsilon}
\global\long\def\Ups{\Upsilon}
\global\long\def\scrK{\mathscr{K}}
\global\long\def\fg{\mathfrak{g}}
\global\long\def\cT{\mathcal{T}}
\global\long\def\cA{\mathcal{A}}
\global\long\def\cG{\mathcal{G}}
\global\long\def\hook{\lrcorner}
\newcommand{\norm}[1]{\left\lVert#1\right\rVert}
\def\sideremark#1{\ifvmode\leavevmode\fi\vadjust{\vbox to0pt{\vss
 \hbox to 0pt{\hskip\hsize\hskip1em
 \vbox{\hsize3cm\tiny\raggedright\pretolerance10000
  \noindent #1\hfill}\hss}\vbox to8pt{\vfil}\vss}}}%
\begin{document}

\title[]{Bounded strictly pseudoconvex domains in $\mathbb{C}^2$ \\ with obstruction flat boundary}

\author{Sean N.\ Curry}
\author{Peter Ebenfelt}
\thanks{The second author was supported in part by the NSF grant DMS-1600701.}

\begin{abstract}
On a bounded strictly pseudoconvex domain in $\mathbb{C}^n$, $n>1$, the smoothness of the Cheng-Yau solution to Fefferman's complex Monge-Ampere equation up to the boundary is obstructed by a local curvature invariant of the boundary. For bounded strictly pseudoconvex domains in $\mathbb{C}^2$ which are diffeomorphic to the ball, we motivate and consider the problem of determining whether the global vanishing of this obstruction implies biholomorphic equivalence to the unit ball. In particular we observe that, up to biholomorphism, the unit ball in $\mathbb{C}^2$ is rigid with respect to deformations in the class of strictly pseudoconvex domains with obstruction flat boundary. We further show that for more general deformations of the unit ball, the order of vanishing of the obstruction equals the order of vanishing of the CR curvature. Finally, we give a generalization of the recent result of the second author that for an abstract CR manifold with transverse symmetry, obstruction flatness implies local equivalence to the CR $3$-sphere.
\end{abstract}
\subjclass[2010]{Primary 32V15, 32T15; Secondary 32H02, 32W20}

\maketitle

\section{Introduction}

Let $\Omega\subset \mathbb{C}^n$, $n>1$, be a bounded strictly pseudoconvex domain with smooth boundary $\partial \Omega$. It is well known that the domain $\Omega$ is determined up to biholomorphism by the CR geometry of its boundary $\partial \Omega$. There are several interrelated approaches to studying the CR geometry of $\partial \Omega$, and the biholomorphic geometry of $\Omega$. In \cite{Fefferman1976, Fefferman1979} Fefferman proposed the study of these geometries, and in particular of the CR boundary invariants, via the formal asymptotics of the Dirichlet problem
\begin{equation} \label{eqn:FeffermanComplexMongeAmpere}
\left\{ \begin{array}{l}
\mathcal{J}(u):= (-1)^n \,\mathrm{det} \left( \begin{array}{ c c} u & u_{z^{\bar{k}}}\\ u_{z^{j}} & u_{z^{j}z^{\bar{k}}}\end{array} \right) = 1\;\,\mathrm{in}\;\, \Omega,\\
u=0 \;\,\mathrm{on}\;\, \partial \Omega
\end{array}\right.
\end{equation}
with $u>0$ in $\Omega$. Fefferman's equation \cref{eqn:FeffermanComplexMongeAmpere} governs the existence of a complete K\"ahler-Einstein metric on $\Omega$, $-\log(u)$ being the K\"ahler potential. In \cite{ChengYau1980} Cheng and Yau proved the existence of a unique solution $u$ to Fefferman's equation with $u\in C^{\infty}(\Omega)\cap C^{n+\frac{3}{2}-\epsilon}(\overline{\Omega})$, $\epsilon>0$. Subsequently, Lee and Melrose \cite{LeeMelrose1982} showed that the Cheng-Yau solution $u$ has an asymptotic expansion of the form
\begin{equation}\label{eqn:Lee-Melrose-asymptotics}
u \sim \rho \sum_{k=0}^{\infty} \eta_k (\rho^{n+1}\log \rho)^k, \quad \eta_k \in C^{\infty}(\overline{\Omega})
\end{equation}
where $\rho$ is a smooth defining function for $\Omega$ satisfying $\mathcal{J}(\rho)=1+O(\rho^{n+1})$. Such a defining function $\rho$ always exists by \cite{Fefferman1976}, and is unique mod $O(\rho^{n+2})$; one refers to $\rho$ as a \emph{Fefferman defining function}. It follows that the Cheng-Yau solution $u$ is in $C^{n+2-\epsilon}(\overline{\Omega})$, $\epsilon > 0$. While the solution $u$, and hence each $\eta_k$ mod $O(\rho^{\infty})$, is globally uniquely determined, Graham \cite{Graham1987a, Graham1987b} showed that the coefficients $\eta_k$ mod $O(\rho^{n+1})$ are locally uniquely determined by $\partial \Omega$ (and independent of the choice of Fefferman defining function $\rho$). Moreover, he showed that if the coefficient $\eta_1$ of the first log term vanishes on $\partial \Omega$ then $\eta_k$ vanishes to infinite order at the boundary for all $k \geq 1$. Thus $\eta_1|_{\partial\Omega}$ is precisely the obstruction to boundary smoothness of the Cheng-Yau solution to Fefferman's equation. The local invariant $b\eta_1:=\eta_1|_{\partial\Omega}$ of the boundary $\partial \Omega$ is called the \emph{obstruction function}. 
The local invariant $b\eta_1$ of $\partial \Omega$ depends on the embedding in $\mathbb{C}^n$, but transforms as a density under local ambient biholomorphisms and so defines a weighted CR invariant (denoted in the abstract setting by $\mathcal{O}$). In particular, the vanishing of $b\eta_1$ is a CR invariant condition for a strictly pseudoconvex hypersurface $M$ in $\mathbb{C}^n$. If $M$ is a strictly pseudoconvex hypersurface for which the obstruction function vanishes, then $M$ is said to be \emph{obstruction flat}. If $M$ is locally CR equivalent to the unit sphere, then we say that $M$ is \emph{CR flat} (or \emph{CR spherical}). For $\Omega$ the unit ball in $\mathbb{C}^n$ the solution to Fefferman's equation is $u=1-\norm{z}^2$, which is smooth up to the boundary, hence $b\eta_1$ vanishes for the unit sphere $\mathbb{S}^{2n-1}\subset \mathbb{C}^n$, $n>1$. By \cite[Proposition 4.14]{Graham1987a} there are (local) real analytic strictly pseudoconvex hypersurfaces in $\mathbb{C}^n$, $n>1$, not locally CR equivalent to the sphere, for which the local invariant $b\eta_1$ vanishes identically. In this article we consider the problem of determining whether this is possible globally for the boundary of a smooth bounded strictly pseudoconvex domain in $\mathbb{C}^2$. The question of whether global obstruction flatness implies CR flatness is also an interesting problem in higher dimensions, but in the $\mathbb{C}^2$ case a full answer to this question seems at present more attainable. In the $\mathbb{C}^2$ case this problem is also closely connected with a well known conjecture concerning the weak singularity in the asymptotic expansion of the Bergman kernel (see \cref{sec:BergmanKernel}). A strong form of this conjecture asserts that every topologically trivial smooth bounded strictly pseudoconvex domain in $\mathbb{C}^2$ with obstruction flat boundary is biholomorphic to the unit ball. Here we give some preliminary results in this direction.

Our first main result is the following observation:
\begin{theorem}\label{thm:DeformationRigidity}
Let $\Omega_t$, $t\in [0,1]$, be a smooth family of smooth bounded strictly pseudoconvex domains in $\mathbb{C}^2$, with $\Omega_0$ the unit ball. If $\partial\Omega_t$ is obstruction flat for all $t$, then each $\Omega_t$ is biholomorphic to the unit ball $\Omega_0$.
\end{theorem}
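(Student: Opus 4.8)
The plan is to reduce the statement to a claim about the CR geometry of the boundaries and then run a connectedness argument in $t$. Since $\{\Omega_t\}$ is a smooth family over the compact interval $[0,1]$, a standard Thom-type isotopy argument shows that every $\Omega_t$ is diffeomorphic to $\Omega_0$, hence to the ball. Recall also that a bounded strictly pseudoconvex domain in $\mathbb{C}^2$ that is diffeomorphic to the ball and has CR spherical boundary is biholomorphic to the unit ball: such a boundary is a compact spherical CR manifold diffeomorphic to $\mathbb{S}^3$, so (being simply connected) its developing map is a global CR diffeomorphism onto $\mathbb{S}^3$, and a CR diffeomorphism between the smooth strictly pseudoconvex boundaries of two bounded domains extends to a biholomorphism of the domains. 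It therefore suffices to prove that $\partial\Omega_t$ is CR spherical for every $t$. Fixing smooth defining functions and pulling the CR structures back to a fixed copy of $\mathbb{S}^3$ gives a smooth curve $t\mapsto J_t$ of embeddable CR structures with $J_0$ standard; the Chern--Moser curvature $Q$ is a CR-covariant differential operator in $J_t$, so $t\mapsto Q(J_t)$ is continuous and $S:=\{t\in[0,1]:\partial\Omega_t\text{ is CR spherical}\}=\{t:Q(J_t)=0\}$ is closed, with $0\in S$.

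It remains to show $S$ is open. Fix $t_0\in S$; then $\Omega_{t_0}$ is biholomorphic to the ball, so after composing the family with this biholomorphism we may assume $t_0=0$, $\Omega_0$ is the unit ball, and $J_t$ is a one-parameter family of embeddable deformations of the standard CR structure on $\mathbb{S}^3$ given by a deformation tensor $\phi_t$ with $\phi_0=0$. The obstruction function, being a weighted CR invariant, is a nonlinear differential operator $\phi\mapsto\mathcal{O}(\phi)$ with $\mathcal{O}(0)=0$, and likewise $\phi\mapsto Q(\phi)$ with $Q(0)=0$; the hypothesis is $\mathcal{O}(\phi_t)\equiv 0$, and we want $Q(\phi_t)\equiv 0$ for small $t$. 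Here I would establish, and then apply, the following structural fact on $\mathbb{S}^3$ (which is essentially the order-of-vanishing statement announced in the introduction): $\mathcal{O}(\phi)=\mathcal{L}\big(Q(\phi)\big)+E(\phi)$ for a fixed linear CR-covariant operator $\mathcal{L}$ and a remainder $E$ vanishing to second order in $Q(\phi)$, where $\mathcal{L}$ is \emph{injective} on the space of Cartan tensors of embeddable deformations --- its noninjectivity being confined to Fourier modes under the Reeb circle action that are excluded by embeddability (and infinitesimal triviality). Granting this, for each small $t$ the relation $\mathcal{L}(Q(\phi_t))=-E(\phi_t)$, the injectivity of $\mathcal{L}$, and the smallness of $Q(\phi_t)$ (which holds by continuity since $Q(\phi_0)=0$) force $Q(\phi_t)=0$. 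Hence $\partial\Omega_t$ is spherical near $t_0$, so $S$ is open; being closed, open, and nonempty, $S=[0,1]$, and the reduction above completes the proof.

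The crux, and the main obstacle, is the structural relationship between $\mathcal{O}$ and $Q$ used in the last step: one must compute the linearization of the obstruction operator at the sphere, identify it with (a linear operator applied to) the linearization of the Cartan tensor, and verify that the surplus kernel of the former consists precisely of non-embeddable or infinitesimally trivial directions. That $\mathbb{C}^2$ corresponds to CR boundary dimension $3$ --- where $Q$ is the single lowest-weight local invariant and the computation reduces to the representation theory of the Reeb $S^1$-action on $\mathbb{S}^3$ --- is exactly what makes this feasible; the same analysis is much less tractable in higher dimensions, which is why the authors restrict to $\mathbb{C}^2$.
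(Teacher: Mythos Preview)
Your reduction to showing that each $\partial\Omega_t$ is CR spherical is fine, and the open--closed argument is a reasonable framework. The genuine gap is in your openness step.

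First, the structural decomposition you posit is not quite right. From \cref{eqn:obstruction-density} one has $\mathcal{O}=\frac{1}{3}(\nabla^1\nabla^1Q_{11}-iA^{11}Q_{11})$, so indeed $\mathcal{O}(\phi)=\mathcal{L}_\phi\bigl(Q(\phi)\bigr)$ for a $\phi$-dependent linear operator $\mathcal{L}_\phi$. Writing $\mathcal{L}_\phi=\mathcal{L}_0+(\mathcal{L}_\phi-\mathcal{L}_0)$, the remainder $E(\phi)=(\mathcal{L}_\phi-\mathcal{L}_0)Q(\phi)$ is \emph{first} order in $Q$ with coefficients of order $O(\phi)$; it is not second order in $Q$. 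Second, even with this corrected form, the inference ``$\mathcal{L}_0(Q)=-E$ small $\Rightarrow Q=0$'' needs a bounded left inverse for $\mathcal{L}_0$ in suitable norms, not mere injectivity; $\mathcal{L}_0$ is a second-order operator that loses derivatives, and you have not set up any Nash--Moser type scheme. Third, and most importantly, what you are actually sketching is the \emph{formal} statement: if $\mathcal{O}_t=O(t^k)$ then $Q_t=O(t^k)$. The paper proves exactly this (\cref{thm:FormalDeformations}), but also points out that it implies \cref{thm:DeformationRigidity} only for real-analytic dependence on $t$, not for smooth families. Your argument, even if the analytic issues were resolved, would at best show that $Q_t$ vanishes to infinite order at each $t_0\in S$, which does not make $S$ open in the smooth category.

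The paper's proof takes an entirely different, global route and avoids all of this: by \cref{lem:Burns-Epstein-infinitesimal-wiggle} the Burns--Epstein invariant $\mu(\partial\Omega_t)$ has $t$-derivative given by an integral against $\mathcal{O}$, hence is constant when each $\partial\Omega_t$ is obstruction flat; but by the Ch\^eng--Lee result (\cref{thm:ChengLee1995}) the CR $3$-sphere is a \emph{strict} local minimizer of $\mu$, so $\mu(\partial\Omega_t)\equiv\mu(S^3)$ forces $\partial\Omega_t$ to be CR equivalent to $S^3$ for all $t$. The Chern--Ji Riemann mapping theorem then gives the biholomorphism. This use of a global secondary invariant is the key idea you are missing.
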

\begin{remark*}
Moreover, using the slice theorem of Ch\^eng and Lee \cite{ChengLee1995}, one can show that in \cref{thm:DeformationRigidity} there exists in fact a smooth family of biholomorphisms $\Phi_t:\Omega_t\to\Omega_0$ for $t\in [0,1]$; see the remark following the proof of \cref{thm:DeformationRigidity}.
\end{remark*}
\cref{thm:DeformationRigidity} follows straightforwardly from the work of Ch\^eng and Lee on the Burns-Epstein invariant \cite{ChengLee1990, ChengLee1995}, using a recent observation of Hirachi \cite{Hirachi2014}. The Burns-Epstein invariant is discussed in \cref{sec:Burns-Epstein}, where \cref{thm:DeformationRigidity} is proved. The necessary background on pseudohermitian and CR invariants and on deformations of strictly pseudoconvex hypersurfaces in $\mathbb{C}^2$ are discussed in Sections \ref{sec:pseudohermitian} and \ref{sec:abstract-and-embedded-deformations} respectively.

A more direct approach to studying this problem is to analyze the variational properties of the CR curvature, namely Cartan's umbilicity tensor $Q$ \cite{Cartan1932a, Cartan1932b}, and of the obstruction density $\mathcal{O}$ under abstract and embedded deformations. At the linearized level, on the CR $3$-sphere $(M=S^3, H, J)$ there is a deformation complex
\begin{equation}\label{eqn:DeformationComplex}
0\longrightarrow TM/H \longrightarrow \mathcal{D}ef(M) \longrightarrow \mathcal{C}urv(M) \longrightarrow \mathcal{B}ian(M) \longrightarrow 0,
\end{equation}
governing abstract deformations of the CR structure for which the contact distribution $H$ is held fixed (there is no loss of generality in doing this by a theorem of Gray \cite{Gray1959}). Generically these abstract deformations will not be embeddable \cite{Nirenberg1974, JacobowitzTreves1982}. Given a CR hypersurface $M$ in a complex manifold $\Sigma$, abstract deformations of the induced CR structure $(M, H, J)$ which arise from a $1$-parameter family of strictly pseudoconvex embeddings $\psi_t : M \to \Sigma$ with $\psi_0 = \mathrm{id}$ (i.e. from a \emph{Kuranishi wiggle}) are referred to as \emph{stably embeddable deformations}. It turns out that by complexifying and type decomposing the complex \cref{eqn:DeformationComplex} on the CR $3$-sphere one obtains a bigraded complex in which the linearized operators governing stably embeddable deformations and the CR obstruction density appear. Working with this complex, and applying a sequence of normalizations of the parametrized deformation $\psi_t : S^3 \to \mathbb{C}^2$ we are able to prove:
\begin{theorem}\label{thm:FormalDeformations}
Let $(S^3,H,J_0)$ be the CR $3$-sphere and let $(S^3,H,J_t)$, $t\in[0,\epsilon)$, be a smooth family of stably embeddable deformations. If the CR obstruction density $\mathcal{O}_t$ of $(S^3,H,J_t)$ vanishes to order $k$ at $t=0$ then so does the CR curvature tensor $Q_t$.
\end{theorem}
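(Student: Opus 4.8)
The plan is to argue by contradiction, pushing the hypothesis $\mathcal{O}_t=O(t^k)$ down order by order in $t$ until it collides with an injectivity property of a linearized operator on $(S^3,H,J_0)$ that can be read off from the bigraded complex obtained from \cref{eqn:DeformationComplex}. Write $J_t=J_0+tJ^{(1)}+t^2J^{(2)}+\cdots$ and let $Q_{(j)}$, $\mathcal{O}_{(j)}$ denote the Taylor coefficients at $t=0$ of $Q_t$, $\mathcal{O}_t$; since $(S^3,H,J_0)$ is CR flat, $Q_{(0)}=\mathcal{O}_{(0)}=0$ automatically. If the conclusion fails, let $m$ be the smallest index with $Q_{(m)}\neq 0$, so $1\le m\le k-1$.

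The first step is to extract the leading term of $\mathcal{O}_t$. Since $\mathcal{O}$ is a CR-invariant density vanishing on CR flat structures, it is a universal polynomial in the Cartan tensor $Q$ and its CR-covariant derivatives in which every term involves at least one factor of $Q$ (this is the known structure of $\mathcal{O}$ in dimension three). Substituting the family $J_t$ and using $Q_{(j)}=0$ for $j<m$: any term that is a product of two or more $Q$-factors is $O(t^{2m})$, while in the terms linear in $Q$ every contribution arising from the $t$-dependence of the CR-covariant derivative is multiplied by some $Q_{(j)}$ with $j<m$ and so vanishes. Hence $\mathcal{O}_{(m)}=C(Q_{(m)})$, where $C$ is a fixed linear differential operator on $\mathcal{C}urv(S^3)$, namely the operator expressing the linearized obstruction through the linearized Cartan tensor. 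Because $m\le k-1$ and $\mathcal{O}_t$ vanishes to order $k$, this gives $C(Q_{(m)})=0$.

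It remains to pin down $Q_{(m)}$. The $t^m$-part of the Bianchi identity, again using $Q_{(j)}=0$ for $j<m$, reduces to the linearized Bianchi identity, so $Q_{(m)}$ lies in the image of the linearized Cartan operator $D:\mathcal{D}ef(S^3)\to\mathcal{C}urv(S^3)$; moreover each of $J^{(1)},\dots,J^{(m-1)}$ is CR-trivial (its image under $D$ vanishes, hence by exactness of \cref{eqn:DeformationComplex} it is tangent to the contact diffeomorphism group), so after absorbing these into a $t$-dependent reparametrization of $S^3$ and normalizing $\psi_t$ by ambient maps — operations that alter $Q_t$, $\mathcal{O}_t$ only by the natural pullback, preserve their orders of vanishing, and keep the family stably embeddable — we may assume $J_t=J_0+t^mE+O(t^{m+1})$ with $E$ a stably embeddable infinitesimal deformation and $Q_{(m)}=D[E]$ (the hypersurfaces $\psi_t(S^3)$ can be put in Moser normal form $\rho_0+t^m r+O(t^{m+1})$, and $E$ is then the Kuranishi wiggle of the honest family $s\mapsto\partial\{\rho_0+sr<0\}$). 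The whole argument then reduces to the claim that $C$ is injective on $D(\mathcal{D}ef^{\mathrm{se}})$, where $\mathcal{D}ef^{\mathrm{se}}\subset\mathcal{D}ef(S^3)$ denotes the stably embeddable infinitesimal deformations: granting this, $C(Q_{(m)})=0$ with $Q_{(m)}=D[E]\neq 0$ and $E\in\mathcal{D}ef^{\mathrm{se}}$ is a contradiction. Proving this injectivity is the heart of the matter and, I expect, the main obstacle: I would complexify and type-decompose \cref{eqn:DeformationComplex}, identify $\mathcal{D}ef^{\mathrm{se}}$ as an explicit collection of isotypic components for the $\mathrm{SU}(2)$ (and transverse $\mathrm{U}(1)$) action on $S^3$, compute the action of $D$ and of $C$ on each component, and check that $C$ has trivial kernel on the range of $D$ over every component meeting $\mathcal{D}ef^{\mathrm{se}}$. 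Embeddability is essential here — on all of $\mathrm{im}(D)$ the operator $C$ does have a nontrivial kernel, in accordance with the existence of non-spherical obstruction flat structures — so the crucial point is that those directions are excluded once one restricts to stably embeddable deformations; a further technical point requiring care is that the normalization indeed places $E$ in $\mathcal{D}ef^{\mathrm{se}}$ rather than merely in $\mathcal{D}ef(S^3)$.
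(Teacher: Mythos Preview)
Your overall architecture matches the paper closely: reduce inductively to a single order $m$ by contact reparametrizations and ambient CR/biholomorphic normalizations (the paper packages these as \cref{lem:deformation-trivial-at-order-k,lem:ambient-CR-normalization,lem:making-psi-k-imaginary}), so that $J_t=J_0+t^mE+O(t^{m+1})$ with $E$ an honest Kuranishi-wiggle deformation and $Q_{(m)}=\RJ E$, $\mathcal{O}_{(m)}=\tfrac{1}{3}\mathcal{D}^*(\RJ E)^{(2,0)}$; then the whole theorem reduces to the linearized implication $\mathcal{D}^*(\RJ E)^{(2,0)}=0\Rightarrow \RJ E=0$ for stably embeddable $E$. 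Your identification of this as the crux, and your care about whether the normalized $E$ remains stably embeddable, are both on point.

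Where you diverge is in how you propose to prove that linearized injectivity. You suggest decomposing $\mathcal{D}ef^{\mathrm{se}}$ into $\mathrm{SU}(2)\times\mathrm{U}(1)$ isotypic pieces and checking injectivity componentwise. The paper instead makes one further normalization (\cref{lem:making-psi-dot-imaginary}/\cref{lem:making-psi-k-imaginary}): take $f=\btheta(\psi^{(m)})$ purely imaginary, so $E=2(\mathcal{D}f-\bar{\mathcal{D}}f)$. Then in the bigraded complex $(\RJ E)^{(2,0)}=-4\mathcal{R}^+\bar{\mathcal{D}}f$, which in the standard coframe is simply $\tfrac{1}{3}f_{\oneb\oneb}{}^{\oneb\oneb}{}_1{}^{\oneb}\,\theta^1\otimes Z_{\oneb}$, and $\mathcal{O}_{(m)}=0$ reads $f_{\oneb\oneb}{}^{\oneb\oneb}{}_{11}{}^{11}=0$. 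Pairing with $f^{11}{}_{11}$ and integrating by parts twice gives $\int|f_{\oneb\oneb}{}^{\oneb\oneb}{}_{11}|^2=0$, hence $(\RJ E)^{(2,0)}=0$ and so $\RJ E=0$. This is a two-line $L^2$ argument replacing your proposed representation-theoretic computation; your route would work, but the paper's is substantially more elementary. What your formulation buys is a cleaner conceptual statement (``$C$ is injective on $D(\mathcal{D}ef^{\mathrm{se}})$''), while the paper's buys a short, coordinate-level proof that avoids any spectral decomposition.
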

\cref{thm:FormalDeformations} is in some sense a refinement of \cref{thm:DeformationRigidity}, though it only implies \cref{thm:DeformationRigidity} in the case of real analytic dependence on the deformation parameter $t$. We include this because it may be proved by more elementary and direct methods, and is also of independent interest. One consequence of \cref{thm:FormalDeformations} is that the real ellipsoids close to the sphere cannot be obstruction flat, and hence do not give rise to non-spherical examples of compact obstruction flat hypersurfaces (see \cref{sec:real-ellipsoids}). The deformation complex \cref{eqn:DeformationComplex} is discussed in detail in \cref{sec:DeformationComplex}, where \cref{thm:FormalDeformations} is proved. It should be mentioned here that the corresponding result fails to hold if the abstract deformation is not required to be stably embeddable \cite{CE2018-stably-embeddable-deformations}.

In recent work on this problem the second author \cite{Ebenfelt-arxiv2016} has shown that for compact abstract CR $3$-manifolds with transverse symmetry (which implies local embeddability) obstruction flatness implies CR flatness. Using a new approach, we extend this result by relaxing the transversality condition on the infinitesimal CR symmetry. We prove:
\begin{theorem}\label{thm:Symmetries}
Let $(M,H,J)$ be a compact CR $3$-manifold with infinitesimal CR symmetry. If $(M,H,J)$ is obstruction flat, then the CR structure is locally spherical.
\end{theorem}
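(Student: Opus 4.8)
The plan is to first handle the local structure near the symmetry, then globalize. Let $T$ be a real vector field on $M$ generating an infinitesimal CR automorphism of $(M,H,J)$. The first step is to stratify $M$ according to the behavior of $T$: on the open set $M' = \{p : T_p \notin H_p\}$ the symmetry is transverse, while on $Z = \{p : T_p \in H_p\}$ it is tangent to the contact distribution (and $Z$ includes the zero set of $T$). On $M'$ we are in the situation already handled by the second author \cite{Ebenfelt-arxiv2016}: a transverse symmetry together with obstruction flatness forces the CR structure to be spherical there. So the CR curvature tensor $Q$, which is a weighted CR invariant, vanishes identically on the open set $M'$.

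The second step is to show that $M' $ is dense, equivalently that $Z$ has empty interior, or else to argue directly on the components of $Z$. If $T$ were tangent to $H$ on an open set $U$, then on $U$ the flow of $T$ preserves both $H$ and $J$ and acts within the contact distribution; one shows using the contact condition (that $H$ is maximally non-integrable) that this is impossible unless $T$ vanishes on $U$, and if $T$ vanishes on an open set then, being a real-analytic object in CR-adapted coordinates where the structure is analytic — or more carefully, by unique continuation for the overdetermined system defining CR vector fields — $T$ vanishes on the connected component, contradicting that it is a nontrivial symmetry. (If the symmetry is genuinely trivial on a whole component of $M$ we instead note that component carries, a priori, no information and handle it separately; but compactness plus the global nature of $\mathcal{O}$ lets us reduce to components where $T\not\equiv 0$.) Hence $M'$ is open and dense, and by continuity of $Q$ we conclude $Q \equiv 0$ on all of $M$.

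The third step is the passage from $Q\equiv 0$ to local sphericity. In dimension three, Cartan's umbilicity tensor $Q$ is the complete local CR invariant: its identical vanishing is equivalent to the CR structure being locally equivalent to $(S^3,H,J_0)$. This is classical \cite{Cartan1932a,Cartan1932b}, so once $Q\equiv 0$ we are done. The reason this argument improves on \cite{Ebenfelt-arxiv2016} is precisely that we never need the symmetry to be transverse globally; we only use transversality on the dense open set where it holds for free, and then use continuity of the curvature.

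The main obstacle I expect is the second step — controlling the tangency locus $Z$. A priori $T$ could be tangent to $H$ along a hypersurface, or vanish on a lower-dimensional set, without this causing any contradiction, so the claim to establish is just that $\mathrm{int}(Z)=\emptyset$ when $T\not\equiv 0$ on the component. The cleanest route is: if $T$ is a nonzero CR vector field tangent to $H$ on an open set $U$, write $T = \mathrm{Re}(fL)$ for a local frame $L$ of $T^{1,0}M$ and a function $f$; the CR-invariance equations $\mathcal{L}_T J = 0$ together with the structure equations force $f$ (hence $T$) to satisfy an elliptic-type overdetermined system whose solutions have the unique continuation property, so $T$ tangent to $H$ and the Levi form nondegenerate are incompatible on an open set unless $T\equiv 0$ there. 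Alternatively one can invoke the fact that a CR vector field on a strictly pseudoconvex 3-manifold which is everywhere tangent to $H$ must be identically zero — this follows from examining the $\bar\partial_b$-type equation it satisfies. Either way the analytic input is unique continuation; the rest is bookkeeping with the structure equations of \cref{sec:pseudohermitian}.
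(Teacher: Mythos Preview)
There is a genuine gap in your first step. The result of \cite{Ebenfelt-arxiv2016} is not a local statement: it is proved for \emph{compact} CR $3$-manifolds with \emph{globally} transverse symmetry, and the argument relies essentially on working with the global contact form $\theta$ normalized by $\theta(X)=1$ (so that $X$ becomes the Reeb field, forcing the pseudohermitian torsion to vanish) and then integrating by parts over the closed manifold. On your open set $M'$ this normalized contact form blows up as one approaches the tangency locus $Z$, so the integration-by-parts argument produces uncontrolled boundary terms; there is no reason the conclusion should survive on a noncompact piece. The paper addresses precisely this point, remarking that it is highly unclear how to obtain \cref{thm:Symmetries} by modifying the approach of \cite{Ebenfelt-arxiv2016}, since the latter depends on the pseudohermitian calculus associated with a global contact form normalized by $\theta(X)=1$. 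So the reduction you propose is the difficulty to be overcome, not a way around it.

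The paper's proof avoids this obstruction by working with the CR tractor (Cartan) connection, which is CR-invariant and does not single out any contact form. The two inputs are (i) a reformulation of obstruction flatness as the vanishing of a tractor divergence, $\nabla^{\oneb}\kappa_{\oneb 0 A}{}^B=0$ (\cref{lem:obstruction-flat-as-divergence}), and (ii) \v{C}ap's prolongation (\cref{lem:Cap08}): an infinitesimal CR symmetry $X$ corresponds to a section $s$ of the adjoint tractor bundle with $\nabla s = -X\hook\kappa$. One integrates the weight $(-2,-2)$ density $s_A{}^C(\nabla^{\oneb}\kappa_{\oneb 0 B}{}^A)s_C{}^B$ over $M$ --- this is well defined with respect to \emph{any} contact form, so nothing blows up --- and integrates by parts using (i) and (ii). An explicit matrix computation with the form of $\kappa$ collapses the integrand to $-iu|Q|^2$ with $u=\btheta(X)$, yielding $\int_M u^2|Q|^2=0$. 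Your second step, that $u$ cannot vanish on an open set for a nontrivial symmetry (via the finite-type prolonged system), is correct and is exactly what finishes the argument from here.
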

\cref{thm:Symmetries} covers a broad class of domains in $\mathbb{C}^2$ not covered by the main theorem in \cite{Ebenfelt-arxiv2016}. For example, if $\Omega\subset \mathbb{C}^2$ is the strictly pseudoconvex domain given by
\begin{equation*}
|w|^2 + f(z,\bar{z}) < c,
 \end{equation*}
 where $f$ is a proper strictly subharmonic function and $c>\mathrm{min}f$ is a constant, then $X=\mathrm{Re}\,(iw\frac{\partial}{\partial w})$ is an infinitesimal CR symmetry of $M=\partial \Omega$, which is transverse to the CR contact distribution on $M$ except along the curve $M\cap\{w=0\}$. Moreover, it is highly unclear how one could obtain the result in \cref{thm:Symmetries} by modifying the approach of \cite{Ebenfelt-arxiv2016}, since the latter relies on being able to work with the pseudohermitian calculus associated with a global contact form $\theta$ for $H$ normalized by $\theta(X) = 1$. We instead develop a new approach to the problem based on the CR invariant calculus associated with the CR Cartan connection. This makes essential use of the work of \v{C}ap \cite{Cap2008} on infinitesimal symmetries and deformations of parabolic geometries (of which hypersurface type CR geometries are an example). These ideas are developed in \cref{sec:TractorCalculus}, where \cref{thm:Symmetries} is proved. We believe that this approach will be highly useful in further work on this problem.

\begin{remark}
It is worthwhile here to point out an analogy with conformal geometry in four dimensions. It is well known that a CR $3$-manifold is obstruction flat if and only if its Fefferman space (a conformal Lorentzian $4$-manifold) is Bach flat. While it is easy to produce examples of compact, Bach flat $4$-manifolds, e.g., any (conformally) Einstein $4$-manifold is Bach flat, it is also known that a $4$-dimensional Fefferman space which is locally conformally Einstein is necessarily locally conformally flat (meaning that the underlying CR structure is locally spherical). Thus, this observation by itself does not provide examples of compact, obstruction flat CR $3$-manifolds that are not locally spherical. In fact, the authors are not aware of any such examples.

The analogy with conformal geometry is useful in the proof of \cref{thm:Symmetries}. In particular, in $4$-dimensional conformal geometry the Bach tensor can be interpreted as the Yang-Mills current for the Cartan/tractor curvature (see, e.g., \cite{BransonGover2001, NurowskiSparling2003}). In our proof of \cref{thm:Symmetries}, we make use of an analogous interpretation for the obstruction density of a CR $3$-manifold (see \cref{lem:obstruction-flat-as-divergence}).
\end{remark}

\subsection*{Acknowledgements} The authors would like to thank Mike Eastwood, Rod Gover, Robin Graham, Kengo Hirachi, Bernhard Lamel, Jack Lee, Pawel Nurowski and Paul Yang for helpful conversations. Part of this work was carried out while the first author was visiting the Banach Centre at IMPAN in Warsaw for the Simons Semester `Symmetry and Geometric Stuctures' (BCSim-2017-s06).

\section{The log term in the asymptotic expansion of the Bergman kernel}\label{sec:BergmanKernel}

Let $\Omega$ be a domain in $\mathbb{C}^n$. The Bergman kernel of $\Omega$ is the integral kernel $K(z,w)$ for the orthogonal projection operator from $L^2(\Omega)$ to the Hardy space $\mathcal{A}^2(\Omega)$ of $L^2$ holomorphic functions on $\Omega$. Given any orthonormal basis $\{h_j\}_{j=1}^{\infty}$ of $\mathcal{A}^2(\Omega)$ the Bergman kernel may be written as $K(z,w)=\sum_{j=1}^{\infty}h_j(z)\overline{h_j(w)}$. When $\Omega$ is the unit ball in $\mathbb{C}^n$ the Bergman kernel is given by $K(z,w)=\frac{n!}{\pi^n}(1-z\cdot \bar{w})^{-(n+1)}$. For $\Omega$ a smooth strictly pseudoconvex domain in $\mathbb{C}^n$ it was shown in \cite{Fefferman1974} that the Bergman kernel along the diagonal may be written as
\begin{equation}
K = \frac{\phi}{\rho^{n+1}} + \psi \log \rho
\end{equation}
where $\rho$ is a defining function for the boundary, and $\phi,\psi\in C^{\infty}(\overline{\Omega})$. Moreover, taking $\rho$ to be a Fefferman defining function, $\phi$ mod $O(\rho^{n+1})$ and $\psi$ mod $O(\rho^{\infty})$ are locally uniquely determined by $\partial \Omega$ (and independent of the choice of Fefferman defining function $\rho$). For the unit ball one may take $\rho = u = 1-\norm{z}^2$, so that $\phi=\frac{n!}{\pi^n}$ and $\psi=0$. A problem posed by many is that of classifying those strictly pseudoconvex domains for which the so-called `weak singularity' $\psi$ mod $O(\rho^{\infty})$ in the asymptotic expansion of the Bergman kernel vanishes. In \cite{Ramadanov1981}, Ramadanov conjectured that if $\psi$ vanishes to infinte order on the boundary of $\Omega$ then $\Omega$ must be biholomorphic to the unit ball. In $\mathbb{C}^2$ a local version of this conjecture holds by work of Graham (who attributes the result to Burns). In \cite{Graham1987b} (cf. \cite{BoutetdeMonvel1988}) Graham expanded $\psi$ in powers of a Fefferman defining function $\rho$, in the $\mathbb{C}^2$ case, to obtain
\begin{equation}\label{eqn:log-term-expansion}
\psi = -\frac{6}{\pi^2}\eta_1 + k|Q|^2\rho + O(\rho^2)
\end{equation}
where $\eta_1$ is as in \cref{eqn:Lee-Melrose-asymptotics}, $Q$ is the Cartan umbilicity tensor of the boundary, and $k$ is a nonzero constant (explicitly computed in \cite{HirachiKomatsuNakazawa1993}).
Using that $\eta_1|_{\partial\Omega}=0$ implies $\eta_1 = O(\rho^\infty)$, it follows from \cref{eqn:log-term-expansion} that if $\psi=O(\rho^2)$ then $Q$ must vanish identically on $\partial \Omega$ (i.e. $\partial \Omega$ must be locally CR spherical); the argument here is local, so that if one only has $\psi=O(\rho^2)$ in the neighborhood of some point in the boundary, then one may still conclude that the boundary is locally CR spherical in that neighborhood. If the domain $\Omega\subset\mathbb{C}^2$ is taken to be simply connected with connected boundary then one may apply the Riemann mapping theorem of \cite{ChernJi1996} to obtain the result that the vanishing of $\psi$ (to second order on the boundary) implies $\Omega$ is biholomorphic to the unit ball. Such topological assumptions are in fact necessary for biholomorphic equivalence to the unit ball to hold, due to examples of bounded strictly pseudoconvex domains, not biholomorphic to the unit ball but with locally spherical boundary, constructed by Burns and Shnider \cite{BurnsShnider1976}. So the conjecture is resolved in the case of $\mathbb{C}^2$. In higher dimensions the conjecture remains open (though see \cref{rem:Hirachi-IMPAN-Nov2017}); there are some negative results for a natural generalization of this conjecture to complex manifolds \cite{EnglisZhang2010, LoiMossaZuddas2017}, highlighting the global nature of this problem.

Closely related to Ramadanov's conjecture is the following question:

\begin{question}\label{question:log-term-coefficient-on-boundary}
Let $\Omega\subset \mathbb{C}^n$, $n>1$, be a smooth bounded strictly pseudoconvex domain with $\psi|_{\partial\Omega}$ identically zero. Does this imply $\partial\Omega$ is locally CR spherical?
\end{question}
In constrast to the above local resolution for the $\mathbb{C}^2$ case of Ramadanov's conjecture, in any dimension the answer to the local version of \cref{question:log-term-coefficient-on-boundary} is no; this is a global problem. In the $\mathbb{C}^2$ case, by \cref{eqn:log-term-expansion}, \cref{question:log-term-coefficient-on-boundary} is equivalent to the question of whether (global) obstruction flatness of the boundary implies local CR flatness.  In this case the question has been taken up already, e.g., in \cite{BoichuCoeure1983, Nakazawa1994} where bounded Reinhardt domains are considered, for which the answer to the question is positive, and in \cite{Ebenfelt-arxiv2016} where compact CR $3$-manifolds with transverse symmetry are considered, for which obstruction flatness is shown to imply local CR flatness. Our goal in what follows is to prove some further results in this direction.
\begin{remark}
By work of Boutet de Monvel and Sj\"ostrand \cite{BoutetdeMonvelSjostrand1976}, for strictly pseudoconvex domains the Szeg\H{o} kernel enjoys an asymptotic expansion similar to that of the Bergman kernel. Questions analogous to those stated above for the Bergman kernel have also been posed for the Szeg\H{o} kernel, taken with respect to a suitably chosen (CR invariant) surface measure on the boundary \cite{HirachiKomatsuNakazawa1993}. In the $\mathbb{C}^2$ case the coefficient of the log term in the Szeg\H{o} kernel has an expansion similar to the expansion \cref{eqn:log-term-expansion} \cite{HirachiKomatsuNakazawa1993}, which again leads naturally to the question of whether global obstruction flatness implies local CR flatness (see also the discussion in \cite{Ebenfelt-arxiv2016}).
\end{remark}
\begin{remark}\label{rem:Hirachi-IMPAN-Nov2017}
Recently Kengo Hirachi has announced a positive answer to \cref{question:log-term-coefficient-on-boundary} for domains in $\mathbb{C}^n$, $n\geq 3$, whose boundaries are sufficiently near the unit sphere (in his talk at the conference on `Symmetry and Geometric Structures' at IMPAN, Warsaw, November 12-18, 2017). In particular, this implies that the conjecture of Ramadanov is true for small perturbations of the unit ball.
\end{remark}

\section{Pseudohermitian calculus}\label{sec:pseudohermitian}

In this section we recall some standard background material on pseudohermitian and CR structures, and the associated Tanaka-Webster calculus.

Let $M$ be a smooth oriented $3$-manifold. A \emph{contact structure} on $M$ is a rank $2$ subbundle $H\subset TM$ which is nondegenerate in the sense that if $H$ is locally given as the kernel of some $1$-form $\theta$, then $\theta\wedge\dee\theta$ is nowhere vanishing. A \emph{CR structure} on $(M,H)$ is given by a smooth endomorphism $J:H\to H$ such that $J^2=-\mathrm{id}$. We refer to $(M,H,J)$ as a \emph{CR $3$-manifold}. The partial complex structure $J$ on $H\subset TM$ defines an orientation of $H$, and therefore defines an orientation on the annihilator subbundle $H^{\perp}:=\mathrm{Ann}(H)\subset T^*M$. A nowhere vanishing section $\theta$ of $H^{\perp}$ is called a \emph{contact form} for $H$. A contact form $\theta$ is positively oriented if $\dee \theta|_{H}$ is compatible with the orientation of $H$, equivalently, if $\dee \theta( \,\cdot\, , J\,\cdot\,)$ is positive definite on $H$. A CR structure $(M,H,J)$ together with a choice of positively oriented contact form $\theta$ is referred to as a \emph{pseudohermitian structure}. The \emph{Reeb vector field} of a contact form $\theta$ is is the vector field $T$ uniquely determined by $\theta(T)=1$ and $T\intprod \dee\theta =0$.

Given a CR manifold $(M,H,J)$ we decompose the complexified contact distribution $\mathbb{C}\otimes H$
as $T^{1,0}\oplus T^{0,1}$, where $J$ acts by $i$ on $T^{1,0}$ and by $-i$ on $T^{0,1}=\overline{T^{1,0}}$. Let $\theta$ be an oriented contact form on $M$. Let $Z_1$ be a local frame for the \emph{holomorphic tangent bundle} $T^{1,0}$ and $Z_{\oneb}=\overline{Z_1}$, so that $\{T,Z_1,Z_{\oneb}\}$ is a local frame for $\mathbb{C}\otimes TM$. Then the dual frame $\{\theta,\theta^1,\theta^{\oneb}\}$ is referred to as an \emph{admissible coframe} and one has
\begin{equation}\label{eqn:h11-definition}
\dee \theta = i h_{1\oneb} \theta^1 \wedge \theta^{\oneb}
\end{equation}
for some positive smooth function $h_{1\oneb}$. The function $h_{1\oneb}$ is the component of the Levi form $\mathrm{L}_{\theta}(U,\overline{V})=-2i\dee\theta(U,\overline{V})$ on $T^{1,0}$, that is
\begin{equation*}
\mathrm{L}_{\theta}(U^1Z_1,V^{\oneb}Z_{\oneb}) = h_{1\oneb}U^1V^{\oneb}.
\end{equation*}
It is sometimes convenient to scale $Z_1$ so that $h_{1\oneb}=1$, but we will not assume this unless otherwise specified. We write $h^{1\oneb}$ for the multiplicative inverse of $h_{1\oneb}$. The Tanaka-Webster connection associated to $\theta$ is given in terms of such a local frame $\{T,Z_1,Z_{\oneb}\}$ by
\begin{equation*}
\nabla Z_1 = \omega_1{}^{1}\otimes Z_1, \quad \nabla Z_{\oneb} = \omega_{\oneb}{}^{\oneb}\otimes Z_{\oneb}, \quad \nabla T =0
\end{equation*}
where the connection $1$-forms $\omega_1{}^{1}$ and $\omega_{\oneb}{}^{\oneb}$ satisfy
\begin{equation}\label{eqn:pseudohermitian-connection1}
\dee \theta^1 = \theta^1\wedge \omega_1{}^{1} + A^1{}_{\oneb}\,\theta\wedge\theta^{\oneb}, \text{ and}
\end{equation}
\begin{equation}\label{eqn:pseudohermitian-connection2}
\omega_1{}^{1} + \omega_{\oneb}{}^{\oneb} =h^{1\oneb}\dee h_{1\oneb},
\end{equation}
for some function $A^1{}_{\oneb}$. The uniquely determined function $A^1{}_{\oneb}$ is known as the \emph{pseudohermitian torsion}. Components of covariant derivatives will be denoted by adding $\nabla$ with an appropriate subscript, so, e.g., if $u$ is a function then $\nabla_1 u = Z_1 u$ and $\nabla_0\nabla_1 u = T Z_1 u - \omega_1{}^{1}(T)Z_1u$. We may also use $h_{1\oneb}$ and $h^{1\oneb}$ to raise and lower indices, so that $A_{\oneb\oneb}= h_{1\oneb}A^1{}_{\oneb}$ and $A_{11}=h_{1\oneb}A^{\oneb}{}_{1}$, with $A^{\oneb}{}_{1} =\overline{A^1{}_{\oneb}}$.

The \emph{pseudohermitian (scalar) curvature} $R$ is defined by the structure equation
\begin{equation*}
\dee \omega_1{}^{1} = Rh_{1\oneb} \theta^1\wedge \theta^{\oneb} + (\nabla^1 A_{11})\,\theta^1\wedge\theta - (\nabla^{\oneb} A_{\oneb\oneb})\,\theta^{\oneb}\wedge\theta.
\end{equation*}
The torsion of the Tanaka-Webster connection (as an affine connection) is captured by the following formulae, for a smooth function $f$,
\begin{equation*}
\nabla_1\nabla_{\oneb}f-\nabla_{\oneb}\nabla_{1}f  =-i h_{1\oneb}\nabla_{0}f,\quad \text{ and }\quad  \nabla_{1}\nabla_{0}f-\nabla_{0}\nabla_{1}f  =A^{\oneb}{_{1}}\nabla_{\oneb}f.
\end{equation*}
The pseudohermitian curvature $R$ may therefore equivalently be defined by the Ricci identity
\begin{equation}\label{eqn:Ricci-identity}
\nabla_1\nabla_{\oneb} V^1 -\nabla_{\oneb}\nabla_1 V^1 + i h_{1\oneb}\nabla_0 V^1 = Rh_{1\oneb} V^1
\end{equation}
for any local section $V^1Z_1$ of $T^{1,0}$. Commuting $0$ and $1$ (or $\oneb$) derivatives on $V^1Z_1$ gives torsion according to the following formulae
\begin{equation}\label{eqn:Tanaka-Webster-commuting-10-derivatives}
\nabla_1\nabla_{0} V^1 -\nabla_{0}\nabla_1 V^1 - A^{\oneb}{_{1}}\nabla_{\oneb}V^1 = (\nabla^1 A_{11}) V^1, \;\;\text{and}
\end{equation}
\begin{equation*}
\nabla_{\oneb}\nabla_{0} V^1 -\nabla_{0}\nabla_{\oneb} V^1 - A^{1}{_{\oneb}}\nabla_{1}V^1 = (\nabla^{\oneb} A_{\oneb\oneb}) V^1.
\end{equation*}
In dimension $3$, the Bianchi identities of \cite[Lemma 2.2]{Lee1988} reduce to
\begin{equation}\label{eqn:pseudohermitian-Bianchi}
\nabla_0 R = 2\mathrm{Re}\,(\nabla^1 \nabla^1 A_{11}).
\end{equation}

The local calculus on CR manifolds associated with the CR Cartan connection is discussed in more detail in \cref{sec:TractorCalculus}. For now it suffices to recall some basic definitions and formulae in terms of pseudohermitian calculus. The \emph{Cartan umbilical tensor} $Q$ of $(M,H,J)$ is a (weighted) CR invariant, whose vanishing is necessary and sufficient for $(M,H,J)$ to be locally equivalent to the induced CR structure on the unit sphere in $\mathbb{C}^2$. As in \cite{ChengLee1990}, given a choice of contact form $\theta$ we interpret the umbilical tensor $Q$ as an endomorphism of $H$, written locally as
\begin{equation}\label{eqn:Q-endomorphism}
Q=iQ_1{}^{\oneb}\theta^1\otimes Z_{\oneb} - iQ_{\oneb}{}^1\theta^{\oneb}\otimes Z_1.
\end{equation}
By \cite[Lemma 2.2]{ChengLee1990} the component $Q_{11}$ of Cartan's tensor is given by
\begin{equation}\label{eqn:Cartan-umbilical-tensor}
Q_{11} =-\frac{1}{6}\nabla_1\nabla_1 R - \frac{i}{2}RA_{11} + \nabla_0 A_{11} +\frac{2i}{3} \nabla_1\nabla^1 A_{11},
\end{equation}
where we have taken the opposite sign convention. If $\thetah=e^{\Ups}\theta$ is another contact form, then $\hat{Q}=e^{-2\Ups}Q$, so that $Q$ may be thought of more invariantly as a weighted section of $\mathrm{End}(H)$. More precisely, $Q$ may be thought of as a CR invariant section of $\mathrm{End}(H)\otimes (TM/H)^{-2}$, the dependency on the contact form $\theta$ only being introduced when we use $\theta$ to trivialize $TM/H$. The Bianchi identity for the curvature of the CR Cartan connection (see \cref{sec:TractorCalculus}) is equivalent to the following Bianchi identity for $Q$, expressed locally as
\begin{equation}\label{eqn:Q11-Bianchi}
\mathrm{Im}(\nabla^1\nabla^1Q_{11}-iA^{11}Q_{11})=0,
\end{equation}
which may also be seen as a direct consequence of \cref{eqn:pseudohermitian-Bianchi}. The \emph{CR obstruction density} is given locally by
\begin{equation}\label{eqn:obstruction-density}
\mathcal{O}=\frac{1}{3}(\nabla^1\nabla^1Q_{11}-iA^{11}Q_{11}).
\end{equation}
The CR obstruction density $\mathcal{O}$ is again a (weighted) CR invariant. If $\thetah=e^{\Ups}\theta$ is another contact form, then $\hat{\mathcal{O}}=e^{-3\Ups}\mathcal{O}$, so that $\mathcal{O}$ defines a CR invariant section of $(TM/H)^{-3}$. Our convention here has been chosen so that, for a strictly pseudoconvex domain $\Omega\subset\mathbb{C}^2$ we have $b\eta_1=\frac{1}{4}\mathcal{O}$, consistent with \cite{HirachiMarugameMatsumoto2017}. Here $b\eta_1$ is also thought of as a density; to obtain the function which arises as the boundary restriction of $\eta_1$ in the expansion \cref{eqn:Lee-Melrose-asymptotics} one should compute $b\eta_1$ with respect to the contact form $\theta = \mathrm{Re}(i\partial\rho)|_{TM}$ induced by a Fefferman defining function $\rho$ for $\Omega$. Since we are only concerned with obstruction flatness, we will allow ourselves to compute with respect to any contact form and work with the CR obstruction density $\mathcal{O}$.

\begin{remark}\label{remark:obstruction-formula}
The CR invariance of the right hand side of \cref{eqn:obstruction-density} will be made clear in \cref{sec:TractorCalculus}. While it is well known by weight considerations (\cite{Graham1987b}) that one therefore has $\mathcal{O}=c(\nabla^1\nabla^1Q_{11}-iA^{11}Q_{11})$ for some nonzero real constant $c$, and there are various ways to determine the constant $c$ by combining references from the literature, it is hard to find a single self-contained reference for the formula \cref{eqn:obstruction-density}. Here we outline a method for deriving this formula. For the computation of general formulae for local CR invariants there is no loss of generality in restricting to the real analytic case. One may therefore compute the formula for $\mathcal{O}$ by considering a real hypersurface $M$ in $\mathbb{C}^2$, taken to be in Chern-Moser normal form \cite{ChernMoser1974}. Letting $(z,w)$ be coordinates for $\mathbb{C}^2$ one takes the normalized defining function
\begin{equation*}
\rho = 2\mathrm{Im}\,w - |z|^2 - \sum_{k,l\geq 2, j\geq 0}\sum A^j_{k\bar{l}}z^j \bar{z}^l(\mathrm{Re}\,w)^j,
\end{equation*}
 with $A^j_{2\bar{2}}=
A^j_{2\bar{3}}=A^j_{3\bar{3}}=0$ for all $j$, and defines the contact form $\theta = \mathrm{Re}(i\partial\rho)|_{TM}$. Taking $\theta^1=\dee z$ one may then solve \cref{eqn:h11-definition}, \cref{eqn:pseudohermitian-connection1} and \cref{eqn:pseudohermitian-connection2} for $h_{11}$, $\omega_1{}^1$ and $A^1{}_{\oneb}$. It is then a straightforward but tedious exercise to confirm that $\nabla^1\nabla^1Q_{11}-iA^{11}Q_{11} = \frac{1}{48}A^0_{4\bar{4}}$ at the origin. By \cite[Proposition 2.2]{Graham1987b} $b\eta_1=4A^0_{4\bar{4}}$, and since we have taken $\mathcal{O}:=4b\eta_1$ this gives \cref{eqn:obstruction-density}. We shall later also see that the constant in \cref{eqn:obstruction-density} makes results of \cite{ChengLee1990} consistent with \cite{HirachiMarugameMatsumoto2017}.
\end{remark}

The bundle $TM/H$ will play an important role in what follows, and should be thought of as a fundamental \emph{density} bundle on $(M,H)$. Let $\mathcal{M}\subset H^{\perp}$ be the bundle of oriented contact forms, thought of as an $\mathbb{R}_+$ bundle over $M$ in the obvious way, and let $\Theta$ be the tautological $1$-form on $\mathcal{M}$ defined by $\Theta_{\theta}=\theta\circ \pi_*$ where $\pi:\mathcal{M}\to M$ is the natural projection. Then $(\mathcal{M}, \dee \Theta)$ is a symplectic manifold, called the \emph{symplectization} of $(M,H)$. Sections of $TM/H$ may be identified with functions which are homogeneous of degree $1$ on $\mathcal{M}$, and sections of $(TM/H)^w$ with functions homogeneous of degree $w$. Consistent with \cref{sec:TractorCalculus}, we introduce the notation $\cE_{\mathbb{R}}(w,w)$ for $(TM/H)^w$, and $\cE(w,w)$ for the corresponding complex line bundle $\mathbb{C}\otimes\cE_{\mathbb{R}}(w,w)$. (For the case of $\cE_{\mathbb{R}}(1,1)$ we will often still write $TM/H$.) The CR obstruction density $\mathcal{O}$ is an invariant section of $\cE_{\mathbb{R}}(-3,-3)$; we say that $\mathcal{O}$ is a CR density of weight $(-3,-3)$. The term `density' is further justified by the observation that $\cE_{\mathbb{R}}(-2,-2)$ may be canonically identified with the bundle $\Lambda^3$ of top-forms on the oriented manifold $M$. To see this, note that the bundle $\cE_{\mathbb{R}}(-1,-1)=(TM/H)^*$ may be naturally identified with $H^{\perp}$, so that $\cE_{\mathbb{R}}(-2,-2)$ may be identified with $H^{\perp}\otimes H^{\perp}$. The canonical identification of $\cE_{\mathbb{R}}(-2,-2)$ with $\Lambda^3$ is then given by the map $H^{\perp}\otimes H^{\perp}\ni \theta\otimes\theta \mapsto \theta\wedge\dee \theta \in \Lambda^3$. We write $\btheta$ for the tautological section of $T^*M\otimes \cE_{\mathbb{R}}(1,1)$ given by the map $TM\to TM/H=\cE_{\mathbb{R}}(1,1)$.

\section{Infinitesimal symmetries and deformations of CR structures}\label{sec:abstract-and-embedded-deformations}

Here we collect some basic results on infinitesimal symmetries and abstract deformations of CR $3$-manifolds, and on infinitesimal deformations of strictly pseudoconvex hypersurfaces in complex surfaces. The relation between abstract and embedded deformations of CR $3$-manifolds has been much studied, particularly in connection with the realizability problem for abstract CR $3$-manifolds \cite{Bland1994, BlandEpstein1996, BlandDuchamp2011, BurnsEpstein1990b, CaseChanilloYang2016, Epstein1992, Epstein1998, EpsteinHenkin2000, Lempert1992, Lempert1994}. The results we present are well known. See, e.g., \cite{ChengLee1990} for an excellent reference on infinitesimal abstract deformations of CR $3$-manifolds. Our approach to infinitesimal deformations of strictly pseudoconvex hypersurfaces in complex surfaces is based on \cite{BlandEpstein1996}. 

\subsection{Contact Hamiltonian vector fields and infinitesimal CR symmetries}

It is well known that the space $\Gamma(TM/H)$ parametrizes the infinitesimal contact diffeomorphisms of $(M,H)$. Given a section $f$ of $TM/H$ there is a vector field $V_f$ on $M$ uniquely determined by the conditions that $V_f\;\mathrm{mod}\; H = f$ and that the Lie derivative $\mathcal{L}_{V_f}$ preserves $\Gamma(H)$, i.e. that $V_f$ be an infinitesimal contact diffeomorphism. The vector field $V_f$ is referred to as the \emph{contact Hamiltonian vector field} with potential $f$. Often we will fix a background contact form for $H$, and thereby think of $f$ as a smooth function on $M$. By Cartan's formula for $\mathcal{L}_{V_f}\theta$ one then has $V_f = fT + H_f$ where $H_f\in \Gamma(H)$ is determined by $H_f\,\hook\,\dee\theta \equiv -\dee f \;\;(\mathrm{mod}\;\theta)$. Moreover, on a CR manifold $(M,H,J)$ we then have the following local formula:
\begin{lemma}
Let $\theta$ be a contact form for $H$ and $Z_1$ a local frame for $T^{1,0}$. The contact Hamiltonian vector field with potential $f$ is given locally by
\begin{equation}
V_f = fT + i f^1 Z_1 - i f^{\oneb}Z_{\oneb}
\end{equation}
where $f^1=\nabla^1f$ and $f^{\oneb}=\nabla^{\oneb}f$.
\end{lemma}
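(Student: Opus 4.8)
The plan is to fix the background contact form $\theta$, write $V_f = fT + H_f$ as in the discussion preceding the lemma (where $H_f\in\Gamma(H)$ is determined by $H_f\,\hook\,\dee\theta\equiv -\dee f\ (\mathrm{mod}\ \theta)$), and then solve this defining relation directly in the admissible coframe $\{\theta,\theta^1,\theta^{\oneb}\}$.

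First I would expand $\dee f = (\nabla_0 f)\,\theta + (\nabla_1 f)\,\theta^1 + (\nabla_{\oneb}f)\,\theta^{\oneb}$, so that $-\dee f \equiv -(\nabla_1 f)\,\theta^1 - (\nabla_{\oneb}f)\,\theta^{\oneb}$ modulo $\theta$. Since $H_f$ is a real section of $H$, write $H_f = a^1 Z_1 + a^{\oneb}Z_{\oneb}$ with $a^{\oneb} = \overline{a^1}$. Using $\dee\theta = i h_{1\oneb}\,\theta^1\wedge\theta^{\oneb}$ together with the duality relations $\theta^1(Z_1) = \theta^{\oneb}(Z_{\oneb}) = 1$ and $\theta^1(Z_{\oneb}) = \theta^{\oneb}(Z_1) = 0$, one computes $H_f\,\hook\,\dee\theta = i h_{1\oneb}\big(a^1\,\theta^{\oneb} - a^{\oneb}\,\theta^1\big)$.

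Equating the coefficients of $\theta^1$ and $\theta^{\oneb}$ on the two sides of $i h_{1\oneb}(a^1\theta^{\oneb} - a^{\oneb}\theta^1) \equiv -(\nabla_1 f)\theta^1 - (\nabla_{\oneb}f)\theta^{\oneb}$ then yields $a^1 = i h^{1\oneb}\nabla_{\oneb}f$ and $a^{\oneb} = -i h^{1\oneb}\nabla_1 f$; with the raising/lowering conventions fixed in \cref{sec:pseudohermitian} this is precisely $a^1 = i f^1$ and $a^{\oneb} = -i f^{\oneb}$, where $f^1 = \nabla^1 f = h^{1\oneb}\nabla_{\oneb}f$ and $f^{\oneb} = \nabla^{\oneb}f = h^{1\oneb}\nabla_1 f$. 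Hence $H_f = i f^1 Z_1 - i f^{\oneb}Z_{\oneb}$ and $V_f = fT + i f^1 Z_1 - i f^{\oneb}Z_{\oneb}$, which is the asserted formula.

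There is no genuine obstacle here: once the contact condition has been transcribed, the computation is a one-line piece of linear algebra in the coframe. The only points demanding any care are the bookkeeping with the barred/unbarred index conventions (so that $f^1$ and $f^{\oneb}$ are not inadvertently interchanged), and verifying at the outset that $H_f\,\hook\,\dee\theta\equiv -\dee f\ (\mathrm{mod}\ \theta)$ is the correct rendering of $\mathcal{L}_{V_f}\theta\equiv 0\ (\mathrm{mod}\ \theta)$ — which follows from Cartan's formula $\mathcal{L}_{V_f}\theta = \dee(\theta(V_f)) + V_f\,\hook\,\dee\theta$ together with $\theta(V_f) = f$ and $T\,\hook\,\dee\theta = 0$.
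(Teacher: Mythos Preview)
Your proof is correct and follows essentially the same approach as the paper: both reduce to checking the defining relation $H_f\,\hook\,\dee\theta \equiv -\dee f\ (\mathrm{mod}\ \theta)$ in the admissible coframe using $\dee\theta = i h_{1\oneb}\,\theta^1\wedge\theta^{\oneb}$. The only cosmetic difference is that the paper plugs in the claimed $H_f$ and verifies, whereas you solve for the coefficients $a^1,a^{\oneb}$; the underlying computation is identical.
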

\begin{proof}
It suffices to check that $H_f:=i f^1 Z_1 - i f^{\oneb}Z_{\oneb}$ satisfies $H_f\,\hook\,\dee\theta \equiv -\dee f \;\;(\mathrm{mod}\;\theta)$. By \cref{eqn:h11-definition} we have
$
H_f \,\hook\,\dee\theta = -f_{\oneb}\theta^{\oneb} - f_1\theta^1 = -\dee f\;\; \mathrm{mod}\; \theta,
$
as required.
\end{proof}
An \emph{infinitesimal CR symmetry} of $(M,H,J)$ is a vector field $V$ whose flow consists of (local) CR diffeomorphisms of $M$. In particular, such a $V$ must be an infinitesimal contact diffeomorphism. An infinitesimal contact diffeomorphism $V=V_f$ is a CR symmetry if and only if $\mathcal{L}_V J =0$ (this being defined since the flow of $V$ preserves $H$). With this in mind we recall:
\begin{lemma}[\cite{ChengLee1990}]
Let $\theta$ be a contact form for $H$ and $Z_1$ a local frame for $T^{1,0}$. If $V=V_f$ is a contact Hamiltonian vector field, then the Lie derivative $\mathcal{L}_V J\in \Gamma(\mathrm{End}(H))$ is given locally by
\begin{equation*}
\mathcal{L}_V J = -2(\nabla_1\nabla^{\oneb}f+iA_1{}^{\oneb}f)\theta^1\otimes Z_{\oneb} - 2(\nabla_{\oneb}\nabla^{1}f-iA_{\oneb}{}^{1}f)\theta^{\oneb}\otimes Z_{1}.
\end{equation*}
\end{lemma}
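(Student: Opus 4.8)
The plan is to compute $\mathcal{L}_{V_f}J$ directly from the Lie bracket, feeding in the explicit expression $V_f=fT+if^1 Z_1-if^{\oneb}Z_{\oneb}$ just obtained and using the structure equations \cref{eqn:h11-definition}, \cref{eqn:pseudohermitian-connection1} and \cref{eqn:pseudohermitian-connection2}.

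First I would reduce everything to a single scalar. Because $V_f$ is an infinitesimal contact diffeomorphism, $\mathcal{L}_{V_f}$ preserves $\Gamma(H)$; hence $[V_f,Z_1]\in\Gamma(\mathbb{C}\otimes H)$ has no $T$-component, and $(\mathcal{L}_{V_f}J)(Z_1):=[V_f,JZ_1]-J[V_f,Z_1]=i[V_f,Z_1]-J[V_f,Z_1]$ is a well-defined section of $\mathbb{C}\otimes H$. Differentiating $J^2=-\id$ shows $\mathcal{L}_{V_f}J$ anticommutes with $J$, so it is off-diagonal, of the form $P\,\theta^1\otimes Z_{\oneb}+\bar P\,\theta^{\oneb}\otimes Z_1$, the second term being the complex conjugate of the first since $\mathcal{L}_{V_f}J$ is real. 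Writing $[V_f,Z_1]=b^1 Z_1+b^{\oneb}Z_{\oneb}$ one gets $i[V_f,Z_1]-J[V_f,Z_1]=2i\,b^{\oneb}Z_{\oneb}$, so the whole lemma reduces to evaluating $b^{\oneb}=\theta^{\oneb}\big([V_f,Z_1]\big)$, after which $P=2i\,b^{\oneb}$.

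Next I would expand
\[
[V_f,Z_1]=f[T,Z_1]-(Z_1 f)T+[\,if^1 Z_1,\,Z_1\,]+[\,-if^{\oneb}Z_{\oneb},\,Z_1\,].
\]
The term $[if^1 Z_1,Z_1]=-(Z_1(if^1))Z_1$ has no $Z_{\oneb}$-part, so only $f[T,Z_1]$, $-if^{\oneb}[Z_{\oneb},Z_1]$ and $i(Z_1 f^{\oneb})Z_{\oneb}$ survive in the $\theta^{\oneb}$-slot. The needed frame brackets come from contracting the structure equations against pairs of frame vectors via $\dee\alpha(X,Y)=X\alpha(Y)-Y\alpha(X)-\alpha([X,Y])$: the conjugate of \cref{eqn:pseudohermitian-connection1}, namely $\dee\theta^{\oneb}=\theta^{\oneb}\wedge\omega_{\oneb}{}^{\oneb}+A^{\oneb}{}_1\,\theta\wedge\theta^1$, evaluated at $(T,Z_1)$ gives $\theta^{\oneb}([T,Z_1])=-A^{\oneb}{}_1$, and at $(Z_1,Z_{\oneb})$ gives $\theta^{\oneb}([Z_1,Z_{\oneb}])=\omega_{\oneb}{}^{\oneb}(Z_1)$. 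Substituting, the $Z_{\oneb}$-component collects to
\[
b^{\oneb}=i\big(Z_1 f^{\oneb}+\omega_{\oneb}{}^{\oneb}(Z_1)\,f^{\oneb}\big)-fA^{\oneb}{}_1=i\,\nabla_1 f^{\oneb}-fA^{\oneb}{}_1=i\big(\nabla_1\nabla^{\oneb}f+iA_1{}^{\oneb}f\big),
\]
the connection term being exactly what promotes $Z_1 f^{\oneb}$ to the covariant derivative $\nabla_1 f^{\oneb}$ of the $T^{0,1}$-valued quantity $f^{\oneb}Z_{\oneb}$, and $-fA^{\oneb}{}_1=i(iA_1{}^{\oneb}f)$. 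Then $P=2i\,b^{\oneb}=-2(\nabla_1\nabla^{\oneb}f+iA_1{}^{\oneb}f)$, and taking the complex conjugate for the other component yields the stated formula.

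I do not expect a genuine obstacle here; this is a routine frame computation. The only points demanding care are the sign and index conventions — in particular keeping $A^1{}_{\oneb}$ distinct from its conjugate $A^{\oneb}{}_1$ and correctly placing $\omega_{\oneb}{}^{\oneb}$ in the covariant derivative of $f^{\oneb}Z_{\oneb}$ — together with the two structural reductions that both the $T$-part and the $T^{1,0}$-part of $[V_f,Z_1]$ drop out, which is what keeps the computation short.
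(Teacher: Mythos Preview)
Your computation is correct. The paper does not actually prove this lemma; it is quoted from \cite{ChengLee1990} without argument. What you have supplied is exactly the kind of direct frame computation that establishes the formula: reduce to the $\theta^{\oneb}$-component of $[V_f,Z_1]$ via the anticommutation of $\mathcal{L}_{V_f}J$ with $J$, read off the needed brackets from the conjugate of \cref{eqn:pseudohermitian-connection1}, and recognize $Z_1 f^{\oneb}+\omega_{\oneb}{}^{\oneb}(Z_1)f^{\oneb}=\nabla_1 f^{\oneb}$. All signs and index placements check out, in particular $A^{\oneb}{}_1=A_1{}^{\oneb}$ in the one-index setting, so the final identification $b^{\oneb}=i(\nabla_1\nabla^{\oneb}f+iA_1{}^{\oneb}f)$ and hence $P=2ib^{\oneb}=-2(\nabla_1\nabla^{\oneb}f+iA_1{}^{\oneb}f)$ is right.
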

Following \cite{ChengLee1995} we define a CR invariant second order operator $\DJ: TM/H \to \mathrm{End}(H)$ given by $\DJ f = -\frac{1}{2}\mathcal{L}_{V_f} J$. Choosing a contact form $\theta$, by which we identify $f$ with a smooth function on $M$, and a local frame $Z_1$ for $T^{1,0}$, we have
\begin{equation}\label{eqn:DJ-formula}
\DJ f = (\nabla_1\nabla^{\oneb}f+iA_1{}^{\oneb}f) \theta^1\otimes Z_{\oneb} + (\nabla_{\oneb}\nabla^{1}f-iA_{\oneb}{}^{1}f)\theta^{\oneb}\otimes Z_{1}.
\end{equation}
An infinitesimal contact diffeomorphism $V=V_f$ is a CR symmetry if and only if $\DJ f=0$, we refer to this as the \emph{CR infinitesimal automorphism equation}.

\subsection{Abstract infinitesimal deformations of CR 3-manifolds}\label{subsec:Abstract-deformations}

Here we consider the space of infinitesimal deformations of a compact CR $3$-manifold $(M,H,J)$ up to equivalence, where two infinitesimal deformations of $(M,H,J)$ are equivalent if they are related by the linearized action of the diffeomorphism group of $M$. It is well known that it suffices to consider only deformations preserving the contact distribution $H$ on $M$, due to a famous result known as Gray's stability theorem:
\begin{lemma}[\cite{Gray1959}]\label{thm:Gray}
Let $(M,H_t)$, $t\in [0,1]$, be a family of contact structures on a compact manifold $M$, smooth in the sense that there is a smooth family of $1$-forms $\theta_t$ with $\ker\theta_t = H_t$. Then there exists a smooth path of diffeomorphisms $\varphi_t$ of $M$ such that $\varphi_0=\mathrm{id}$ and $\varphi_t:(M,H) \to (M,H_t)$ is a contact diffeomorphism for all $t\in[0,1]$.
\end{lemma}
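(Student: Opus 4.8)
The plan is to run the classical Moser deformation argument. I seek the path $\varphi_t$ as the flow of a time-dependent vector field $X_t$, arranged so that $\varphi_t^*\theta_t=\lambda_t\,\theta_0$ for a smooth one-parameter family of positive functions $\lambda_t$ on $M$ with $\lambda_0\equiv 1$. Since $\ker(\lambda_t\theta_0)=\ker\theta_0=H$, this identity forces $d\varphi_t(H)=H_t$, i.e.\ $\varphi_t:(M,H)\to(M,H_t)$ is a contact diffeomorphism, which is exactly the desired conclusion.

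To produce $X_t$ I would differentiate the target identity. Writing $\tfrac{\dee}{\dee t}\varphi_t=X_t\circ\varphi_t$ with $\varphi_0=\id$, Cartan's formula gives $\tfrac{\dee}{\dee t}(\varphi_t^*\theta_t)=\varphi_t^*\big(X_t\hook\dee\theta_t+\dee(\theta_t(X_t))+\dot\theta_t\big)$, so it suffices to find $X_t\in\Gamma(H_t)$ and a smooth family of functions $\mu_t$ on $M$ with $X_t\hook\dee\theta_t+\dot\theta_t=\mu_t\,\theta_t$ (then $\theta_t(X_t)=0$, and one recovers $\lambda_t=\exp\big(\int_0^t(\mu_s\circ\varphi_s)\,\dee s\big)$ by solving the resulting scalar linear ODE along the flow). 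I would solve this equation of $1$-forms pointwise using the contact condition: pairing it with an arbitrary $Y\in H_t$ annihilates the $\theta_t$ term and leaves $\dee\theta_t(X_t,Y)=-\dot\theta_t(Y)$, which by nondegeneracy of $\dee\theta_t|_{H_t}$ has a unique solution $X_t\in H_t$ depending smoothly on $t$ and on the base point; pairing instead with the Reeb vector field $T_t$ of $\theta_t$ and using $T_t\hook\dee\theta_t=0$ together with $X_t\in H_t$ forces $\mu_t=\dot\theta_t(T_t)$, so I define $\mu_t$ by this formula. Since $H_t$ and $\mathbb{R}T_t$ together span $TM$ at every point, the $1$-form identity then holds identically.

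Finally, since $M$ is compact the time-dependent vector field $X_t$ is complete, so its flow $\varphi_t$ exists for all $t\in[0,1]$, satisfies $\varphi_0=\id$, and depends smoothly on $t$; setting $\alpha_t:=\varphi_t^*\theta_t$ one computes $\dot\alpha_t=(\mu_t\circ\varphi_t)\,\alpha_t$ with $\alpha_0=\theta_0$, hence $\alpha_t=\lambda_t\theta_0$ with $\lambda_t$ as above. This completes the argument. The only step where anything could fail is the completeness of the flow, which is precisely where compactness of $M$ is used; everything else is the pointwise linear algebra dictated by the nondegeneracy of $\dee\theta_t$ on $H_t$, so I do not anticipate a real obstacle.
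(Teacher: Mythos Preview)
Your argument is correct: this is the standard Moser-trick proof of Gray's stability theorem, and all the steps (the reduction to finding $X_t\in\Gamma(H_t)$ with $X_t\hook\dee\theta_t+\dot\theta_t=\mu_t\theta_t$, the pointwise solution via nondegeneracy of $\dee\theta_t|_{H_t}$, the identification $\mu_t=\dot\theta_t(T_t)$, and completeness from compactness) are right.

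Note, however, that the paper does not give its own proof of this lemma at all; it simply states the result with a citation to Gray's original paper. So there is no ``paper's proof'' to compare against---you have supplied a proof where the authors were content to quote the literature.
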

We therefore restrict our consideration to the space of infinitesimal deformations of the CR manifold $(M,H,J)$ arising from a smooth $1$-parameter family of CR structures $J_t$ on $(M,H)$ with $J_0=J$. Let $(M,H,J_t)$, $t\in[0,\epsilon)$, be a such a smooth family of CR structures on $M$. Denoting $\left.\frac{d}{dt}\right|_{t=0}J_t$ by $\dot{J}$, differentiating the equation $J_t^2=-\mathrm{id}_{H}$ at $t=0$ we obtain
\begin{equation*}
\dot{J}J+J\dot{J}=0.
\end{equation*}
We let $\mathcal{D}ef(M)\subset \mathrm{End}(H)$ denote the bundle of consisting of endomorphisms of $H$ which anticommute with $J$; note that $\mathcal{D}ef(M)$ depends on $(M,H,J)$. The space of infinitesimal deformations of $(M,H,J)$, with $H$ held fixed, is then the space of smooth sections of $\mathcal{D}ef(M)$. That is, if $E$ is a section of $\mathcal{D}ef(M)$, then there is a path $J_t$ of CR structures on $(M,H)$ with $J_0=J$ such that $J_t = J + tE + O(t^2)$. In fact, if we write $E$ locally as $E_{1}{}^{\oneb}\theta^1\otimes Z_{\oneb} + E_{\oneb}{}^1\theta^{\oneb}\otimes Z_1$ then such a path is given by
\begin{equation*}
J_t = (1 + t^2 |E|^2)^{1/2} J + tE
\end{equation*}
where $|E|^2=E_1{}^{\oneb}E_{\oneb}{}^1$ \cite{ChengLee1990}. We refer to a section $E$ of $\mathcal{D}ef(M)$ as an \emph{infinitesimal deformation tensor} for $(M,H,J)$.

Given a smooth family $(M,H,J_t)$, $t\in[0,\epsilon)$, of CR structures we write $\mathbb{C}\otimes H = {}^t T^{1,0}\oplus {}^t T^{0,1}$ where $J_t$ acts on $^t T^{1,0}$ by $i$ and on $^t T^{0,1}$ by $-i$. If $Z_1$ is a local frame for $T^{1,0}= {}^0 T^{1,0}$, then (for sufficiently small $t$) there is a local frame $Z^t_1$ for $^t T^{1,0}$ given by
\begin{equation*}
Z^t_1 = Z_1 + \varphi_1{}^{\oneb}(t)Z_{\oneb}.
\end{equation*}
If we fix a contact form $\theta$ for $H$, and take the coframe $\{\theta, \theta^1_t, \theta^{\oneb}_t\}$ dual to $\{T,Z_1^t, Z_{\oneb}^t\}$ then
\begin{equation*}
\theta^1_t = \frac{1}{1-|\varphi(t)|^2}\left(\theta^1 - \varphi_{\oneb}{}^{1}(t)\theta^{\oneb}\right),
\end{equation*}
where $\varphi_{\oneb}{}^1(t)=\overline{\varphi_1{}^{\oneb}(t)}$ and $|\varphi(t)|^2=\varphi_1{}^{\oneb}(t)\varphi_{\oneb}{}^1(t)$. Writing $J_t = i\theta^1_t\otimes Z_1^t -i \theta^{\oneb}_t\otimes Z_{\oneb}^t$ and
\begin{equation*}
\varphi_1{}^{\oneb}(t) = t\varphi_1{}^{\oneb} + O(t^2)
\end{equation*}
one easily sees that the corresponding infinitesimal deformation tensor $E=\left.\frac{d}{dt}\right|_{t=0}J_t$ is given locally by
\begin{equation}\label{eqn:deformation-tensor-local}
E= 2i\varphi_{1}{}^{\oneb}\theta^1\otimes Z_{\oneb} - 2i\varphi_{\oneb}{}^1\theta^{\oneb}\otimes Z_1.
\end{equation}

The linearization at $(M,H,J)$ of the action of contact diffeomorphisms (by pullback) on the space of CR structures on $(M,H)$ is given by the map which sends an infinitesimal contact diffeomorphism $V$ and an infinitesimal deformation tensor $E$ to the infinitesimal deformation tensor $E+\mathcal{L}_V J$. We say that  pair of infinitesimal deformation tensors $E, E'$ are \emph{equivalent} if $E'-E$ lies in the image of $\DJ$ (recall that $\DJ f = -\frac{1}{2}\mathcal{L}_{V_f} J$, for $f\in \Gamma(TM/H)$). If $E$ lies in the image of $\DJ$ we call $E$ a \emph{trivial} infinitesimal deformation tensor.

For later use we observe that, in a weak sense, an equivalence between infinitesimal deformations can be integrated. Let $J_t, J'_t$, $t\in[0,\epsilon)$, be a pair of smooth paths of CR structures on $(M,H)$ with $J_0=J'_0=J$. We say that $J'_t$ is a \emph{contact reparametrization} of $J_t$ if there exists a smooth path $\varphi_t$, $t\in[0,\epsilon)$, of contact diffeomorphisms of $(M,H)$ such that $\varphi_0=\mathrm{id}$ and $\varphi_t^* J_t = J'_t$, $t\in[0,1)$.
\begin{lemma}\label{lem:contact-reparametrization-Jdot}
Let $J_t, J'_t$, $t\in[0,\epsilon)$, be a pair of smooth paths of CR structures on a compact contact manifold $(M,H)$ with $J_0=J'_0=J$. Suppose that the initial infinitesimal deformations $\dot{J}$ and $\dot{J}'$ are equivalent. Then there is a contact reparametrization $J_t''$ of $J_t$ such that $\dot{J}''=\dot{J}'$.
\end{lemma}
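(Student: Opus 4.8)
The plan is to realize the equivalence $\dot{J}'-\dot{J}=\DJ f$ geometrically, by taking the reparametrizing family $\varphi_t$ to be the flow of a single, $t$-independent contact Hamiltonian vector field. Recall that $\DJ g = -\tfrac{1}{2}\mathcal{L}_{V_g}J$ for $g\in\Gamma(TM/H)$, where $V_g$ is the contact Hamiltonian vector field with potential $g$, and that $\DJ$ is linear.

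First I would fix, using the hypothesis, a section $f\in\Gamma(TM/H)$ with $\dot{J}'-\dot{J}=\DJ f$, and set $X:=V_{-f/2}$. Since $M$ is compact, $X$ is complete, so its flow $\varphi_t$ is a one-parameter group of contact diffeomorphisms of $(M,H)$, depending smoothly on $t\in\mathbb{R}$, with $\varphi_0=\mathrm{id}$. I would then define $J''_t:=\varphi_t^*J_t$. Because each $\varphi_t$ is a contact diffeomorphism, $\varphi_t^*J_t$ is again an $\mathbb{R}$-linear endomorphism of $H$ squaring to $-\mathrm{id}_H$, so $J''_t$ is a smooth path of CR structures on $(M,H)$ with $J''_0=\varphi_0^*J_0=J$; thus $J''_t$ is a contact reparametrization of $J_t$ by construction, with reparametrizing path $\varphi_t$.

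The one computation to carry out is that $\dot{J}''=\dot{J}'$. Differentiating the map $(s,t)\mapsto\varphi_s^*J_t$ along the diagonal at the origin and using bilinearity of the derivative in its two slots gives $\dot{J}''=\mathcal{L}_X J+\dot{J}$; this is legitimate since $X$ is a contact vector field, so $\mathcal{L}_X J\in\Gamma(\mathrm{End}(H))$ is defined. By the definition and linearity of $\DJ$,
\[
\mathcal{L}_X J=\mathcal{L}_{V_{-f/2}}J=-2\,\DJ\!\left(-\tfrac{1}{2}f\right)=\DJ f,
\]
so that $\dot{J}''=\DJ f+\dot{J}=\dot{J}'$, as required.

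There is no serious obstacle here: the lemma is essentially a formal consequence of the identification of $\operatorname{im}\DJ$ with the tangent space at $J$ to the orbit of $J$ under the group of contact diffeomorphisms, the flow of $X$ furnishing the integration of this infinitesimal statement. The only points demanding (mild) care are the completeness of $X$, which is where compactness of $M$ enters, and bookkeeping of the factor $-\tfrac{1}{2}$ relating $\DJ$ to $\mathcal{L}_{V_f}J$; notably, no matching of higher-order terms in $t$ is needed, since only $\dot{J}''$ is constrained.
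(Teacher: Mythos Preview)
Your proof is correct and follows essentially the same approach as the paper: both take the reparametrizing family to be the flow of a single contact vector field $V$ with $\mathcal{L}_V J=\dot{J}'-\dot{J}$, set $J''_t=\varphi_t^*J_t$, and differentiate along the diagonal. The only cosmetic difference is that the paper invokes the equivalence directly as $\dot{J}'-\dot{J}=\mathcal{L}_V J$, whereas you route through $\DJ f$ and then undo the $-\tfrac12$ to recover $V=V_{-f/2}$; your added remarks on completeness and the bookkeeping of constants are accurate.
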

\begin{proof}
Since $\dot{J}$ and $\dot{J}'$ are equivalent, there exists an infinitesimal contact diffeomorphism $V$ for which  $\dot{J}'-\dot{J}=\mathcal{L}_V J$. Let $\varphi_t$ denote the flow of $V$, and let $J_t''=\varphi_t^*J_t$. Then
\begin{equation*}
\left.\frac{d}{dt}\right|_{t=0} J_t''=\left.\frac{d}{dt}\right|_{t=0} \varphi_t^*J_t = \varphi_0^*\left.\frac{d}{dt}\right|_{t=0} J_t + \left.\frac{d}{dt}\right|_{t=0} \varphi_t^*J_0= \dot{J} + \mathcal{L}_V J = \dot{J}'
\end{equation*}
as required.
\end{proof}

\subsection{Infinitesimal deformations of strictly pseudoconvex hypersurfaces}\label{subsec:Infinitesimal-hyerpsurface-deformations}

Let $M$ be a strictly pseudoconvex hypersurface in a complex surface $\Sigma$. Then $M$ carries an \emph{induced CR structure} $(M,H,J)$, where $H_p$ is the maximal complex subspace in $T_p M\subset T_p \Sigma$ for each $p\in M$ and $J$ is induced from the standard complex structure on $\Sigma$.
Since the considerations of this section will be local (and biholomorphically invariant) we will simply consider the case of a strictly pseudoconvex hypersurface in $\mathbb{C}^2$.
It is also no loss of generality to assume that our deformations are parametrized. Let $M$ be a strictly pseudoconvex hypersurface in $\mathbb{C}^2$, with induced CR structure $(M,H,J)$. We say that a smooth family of embeddings $\psi_t:M\to \mathbb{C}^2$, $t\in[0,\epsilon)$, is a \emph{parametrized deformation} of $M$ if $\psi_0 = \mathrm{id}_M$ and $\psi_t(M)$ is strictly pseudoconvex for all $t$. By pulling back the induced CR stuctures on $\psi_t(M)$ by $\psi_t$ for each $t$, one obtains a smooth family of CR structures $(M,H_t,J_t)$ on $M$ with $(M,H_0,J_0)=(M,H,J)$. We say that a parametrized deformation $\psi_t$ of $M$ is \emph{contact parametrized} if the induced family of CR structures $(M,H_t,J_t)$ on $M$ satisfies $H_t=H$ for all $t$, equivalently if $\psi_t:M\to \psi_t(M)$ is a contact diffeomorphism for all $t$, where the contact structure on $\psi_t(M)\subset \mathbb{C}^2$ comes from the induced CR structure. By Gray's stability theorem (\cref{thm:Gray}) any parametrized deformation may be reparametrized by a $1$-parameter family of diffeomorphisms of $M$ so that it becomes a contact parametrized deformation.

Given a strictly pseudoconvex hypersurface $M\subset\mathbb{C}^2$ it is usual to identify the real tangent space $T\mathbb{C}^2|_{M}$ with the space $T_{(1,0)}:=\mathbb{C}TM / T^{0,1}$ defined intrinsically in terms of the CR structure of $M$. Locally this identification is given by the map
\begin{equation*}
T_{(1,0)} \ni V^0\, T + V^1 Z_1 \; \mathrm{mod}\; T^{0,1}
\mapsto (\mathrm{Re}\, V^0)T + (\mathrm{Im}\, V^0) JT + 2\mathrm{Re}(V^1 Z_1) \in T\mathbb{C}^2|_{M},
\end{equation*}
where here $J$ denotes the standard complex structure on $\mathbb{C}^2$.
If $\psi_t$ is a contact parametrized deformation of $M$ then $\left.\frac{d}{dt}\right|_{t=0}\psi_t$ defines a section of $T\mathbb{C}^2|_M$ called the \emph{variational vector field}. We usually think of the variational vector field as a section of $T_{(1,0)}$ and denote it by $\dot{\psi}$. A contact Hamiltonian vector field $V$ on $M$ (taken mod $T^{0,1}$) is a trivial example of a variational vector field, since the flow of $V$ may be thought of as a (trivial) contact parametrized deformation of $M$. The variational vector field of a general contact parametrized deformation is in some sense a complex analog of a contact Hamiltonian vector field, as shown by the following lemma.
\begin{lemma}[\cite{BlandEpstein1996}]\label{lem:variational-vector}
Let $\psi_t:M\to \mathbb{C}^2$, $t\in[0,\epsilon)$, be a contact parametrized deformation of the strictly pseudoconvex hypersurface $M\subset \mathbb{C}^2$. The variational vector field $\dot{\psi}\in \Gamma(T_{(1,0)})$ is given locally with respect to an admissible coframe $\{\theta,\theta^1,\theta^{\oneb}\}$ by
\begin{equation*}
\dot{\psi} = fT + if^1Z_1 \; \mathrm{mod}\; T^{0,1}
\end{equation*}
where $f$ is the complex function $\theta(\dot{\psi})$ and $f^1=\nabla^1 f$. Moreover, if $(M,H, J_t)$ is the smooth family of CR structures on $M$ arising from $\psi_t$ then the initial infinitesimal deformation tensor $E=\dot{J}$ is given locally by \cref{eqn:deformation-tensor-local} with
\begin{equation*}
\varphi_{\oneb}{}^1 = -i(\nabla_{\oneb}\nabla^{1} f - i A_{\oneb}{}^{1}f), \quad \varphi_{1}{}^{\oneb} = \overline{\varphi_{\oneb}{}^1 }= i(\nabla_1\nabla^{\oneb} \bar{f} + i A_{1}{}^{\oneb}\bar{f}).
\end{equation*}
\end{lemma}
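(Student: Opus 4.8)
Following \cite{BlandEpstein1996}, the plan is to differentiate at $t=0$ the two constraints carried by a contact parametrized deformation: the condition that $\psi_t$ is contact (which gives the first assertion), and the fact that the holomorphic tangent line ${}^tT^{1,0}$ of $(M,H,J_t)$ is the kernel of $P^{0,1}\circ d\psi_t$, where $P^{1,0},P^{0,1}$ are the type projections of $\mathbb{C}\otimes T\mathbb{C}^2$ (which gives the formula for $E$). I would work along $M$ with the flat connection $D$ of $\mathbb{C}^2$ and the frame $\{Z_1,Z_{\oneb},\xi,\bar\xi\}$ of $\mathbb{C}\otimes T\mathbb{C}^2|_M$, where $\xi:=(1-iJ)T=T-iJT$ is the $(1,0)$ field along $M$ transverse to $T^{1,0}M$ determined by the Reeb field $T$ of $\theta$, so that $T=\tfrac12(\xi+\bar\xi)$. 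Every real field $X$ along $M$ equals $\mathrm{Re}\,W$ for the unique $(1,0)$ field $W=2P^{1,0}X$, and unwinding the isomorphism $T\mathbb{C}^2|_M\cong T_{(1,0)}$ one checks that if $\dot\psi=\mathrm{Re}(W^{\xi}\xi+W^{1}Z_1)$ then $\dot\psi\equiv W^{\xi}T+\tfrac12W^{1}Z_1$ modulo $T^{0,1}$; in particular $f=\theta(\dot\psi)=W^{\xi}$ (legitimate since $\theta$ kills $T^{0,1}M$), and the first assertion is exactly the identity $W^{1}=2if^{1}$.

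The only external input needed is the classical comparison of $D$ with the Tanaka--Webster connection of $M\subset\mathbb{C}^2$, which expresses the Levi form and the pseudohermitian torsion as components of the second fundamental form of $M$: taking $\theta=\mathrm{Re}(i\partial r)$ and $\xi$ as above, and reading off \cref{eqn:h11-definition}, \cref{eqn:pseudohermitian-connection1} and \cref{eqn:pseudohermitian-connection2}, one obtains $D_{Z_1}Z_{\oneb}\equiv\omega_{\oneb}{}^{\oneb}(Z_1)Z_{\oneb}-\tfrac{i}{2}h_{1\oneb}\,\bar\xi$ and $D_{Z_1}\bar\xi\equiv 2A_{1}{}^{\oneb}Z_{\oneb}$, both modulo $Z_1$- and $\xi$-terms. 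Getting these two constants right --- they depend on the normalizations of $\theta$ and $\xi$ --- is the one genuinely delicate point; the rest is a short differentiation.

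For the first assertion, $\psi_t$ is contact exactly when ${}^tT^{1,0}=\ker(P^{0,1}\circ d\psi_t)$ lies in $\mathbb{C}\otimes H$, i.e.\ when $P^{0,1}\circ d\psi_t\colon\mathbb{C}\otimes H\to T^{0,1}\mathbb{C}^2$ is singular for all $t$. In the frames $\{Z_1,Z_{\oneb}\}$ and $\{Z_{\oneb},\bar\xi\}$ this map equals $\bigl(\begin{smallmatrix}0&1\\0&0\end{smallmatrix}\bigr)$ at $t=0$, whose adjugate is $\bigl(\begin{smallmatrix}0&-1\\0&0\end{smallmatrix}\bigr)$, so the derivative of its determinant is, up to sign, the $\bar\xi$-component of $\tfrac{d}{dt}\big|_{0}P^{0,1}(d\psi_tZ_1)=P^{0,1}(D_{Z_1}\dot\psi)=\tfrac12D_{Z_1}\overline{W}$. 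Expanding $\overline{W}=\bar f\,\bar\xi+\overline{W^{1}}Z_{\oneb}$ and using the comparison formulas, vanishing of this component reads $Z_1\bar f-\tfrac{i}{2}h_{1\oneb}\overline{W^{1}}=0$, i.e.\ $\overline{W^{1}}=-2i\nabla^{\oneb}\bar f$, i.e.\ $W^{1}=2if^{1}$, as claimed. For the formula for $E$, differentiate $P^{0,1}(d\psi_tZ_1^{t})=0$ at $t=0$ for a frame $Z_1^{t}=Z_1+\varphi_1{}^{\oneb}(t)Z_{\oneb}$ of ${}^tT^{1,0}$: since $\tfrac{d}{dt}\big|_{0}d\psi_t(Z_1)=D_{Z_1}\dot\psi$, this gives $\varphi_1{}^{\oneb}Z_{\oneb}=-\tfrac12D_{Z_1}\overline{W}$, whose $\bar\xi$-component vanishes by the first assertion, so $\varphi_1{}^{\oneb}$ is its $Z_{\oneb}$-component. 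Substituting $W^{1}=2if^{1}$ and the comparison formulas, and rewriting $Z_1(\nabla^{\oneb}\bar f)=\nabla_1\nabla^{\oneb}\bar f-\omega_{\oneb}{}^{\oneb}(Z_1)\nabla^{\oneb}\bar f$ so that the connection-form terms cancel, leaves $\varphi_1{}^{\oneb}=i\nabla_1\nabla^{\oneb}\bar f-A_1{}^{\oneb}\bar f=i(\nabla_1\nabla^{\oneb}\bar f+iA_1{}^{\oneb}\bar f)$; conjugating gives $\varphi_{\oneb}{}^{1}=-i(\nabla_{\oneb}\nabla^{1}f-iA_{\oneb}{}^{1}f)$, and \cref{eqn:deformation-tensor-local} then yields the stated $E$.

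There is a built-in sanity check: when $f$ is real, $W=f\xi+2if^{1}Z_1$ has $\mathrm{Re}\,W=fT+if^{1}Z_1-if^{\oneb}Z_{\oneb}=V_f$, the contact Hamiltonian field with potential $f$, so the deformation is then infinitesimally a contact reparametrization and the formula collapses to $E=\mathcal{L}_{V_f}J=-2\DJ f$, consistent with \cref{eqn:DJ-formula}. In short, the one real obstacle is fixing the constants $-\tfrac{i}{2}h_{1\oneb}$ and $2A_1{}^{\oneb}$ in the connection comparison --- the former produces the factor $i$ in $f^{1}=\nabla^{1}f$, the latter the torsion term in $\varphi_1{}^{\oneb}$ --- after which both assertions drop out of the single differentiation above.
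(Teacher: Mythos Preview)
Your argument is correct, and the underlying computation is essentially the same one the paper does---you are differentiating the CR condition on the moving $(1,0)$ line and reading off the transverse and tangential components---but you package it differently. The paper works with the ambient coordinate functions: it differentiates $Z_{\oneb}^{t}\psi_t^{k}=0$ for $k=1,2$, expands $Z_{\oneb}\dot\psi^{k}$ using the Lie bracket formulas $[Z_{\oneb},Z_1]=ih_{1\oneb}T+\omega_1{}^1(Z_{\oneb})Z_1-\omega_{\oneb}{}^{\oneb}(Z_1)Z_{\oneb}$ and $[Z_{\oneb},T]=A^1{}_{\oneb}Z_1-\omega_{\oneb}{}^{\oneb}(T)Z_{\oneb}$, and then invokes the nondegeneracy of the embedding via the nonvanishing of $\det\bigl(\begin{smallmatrix} Z_1\psi^1 & Z_1\psi^2\\ T\psi^1 & T\psi^2\end{smallmatrix}\bigr)$ to split the resulting system into the two conclusions. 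You instead stay with vectors: you differentiate $P^{0,1}(d\psi_t Z_1^{t})=0$ directly in $T^{0,1}\mathbb{C}^2$, use the flat connection $D$ and its comparison with the Tanaka--Webster connection (your formulas $D_{Z_1}Z_{\oneb}=\omega_{\oneb}{}^{\oneb}(Z_1)Z_{\oneb}-\tfrac{i}{2}h_{1\oneb}\bar\xi$ and $D_{Z_1}\bar\xi=2A_1{}^{\oneb}Z_{\oneb}$, which are indeed exact, not merely modulo $(1,0)$ terms, since $D$ preserves type), and split by $\bar\xi$- and $Z_{\oneb}$-components rather than by coordinate index. Your route avoids the explicit $2\times 2$ determinant argument and would carry over verbatim to a hypersurface in any complex surface; the paper's route is slightly more elementary in that it uses only Lie brackets, without needing the second-fundamental-form identities you call ``the one genuinely delicate point''---though, as you note, those identities are themselves a one-line consequence of the same brackets via torsion-freeness of $D$.
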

\begin{proof}
We write $\psi_t:M\to \mathbb{C}^2$ in components as $\psi_t=(\psi^1_t,\psi^2_t)$. Since, by definition, $\psi_t:(M,H_t,J_t)\to \mathbb{C}^2$ is a CR embedding for each $t\in [0,\epsilon)$, it follows that the component functions $\psi^1_t$ and $\psi^2_t$ are CR functions for $(M,H_t,J_t)$. Let $Z_1$ be a local frame for $T^{1,0}= {}^0 T^{1,0}$. Then (for sufficiently small $t$) there is a local frame for $^t T^{0,1}$ given by $Z_{\oneb}^t = Z_{\oneb} + \varphi_{\oneb}{}^1(t)Z_1$, with $\varphi_{\oneb}{}^1(t) = t\varphi_{\oneb}{}^1 + O(t^2)$. The fact that $\psi^1_t$ and $\psi^2_t$ are CR functions for $(M,H_t,J_t)$ is expressed by the equations $Z_{\oneb}^t\psi^k_t=0$, $k=1,2$. Differentiating these equations at $t=0$ we obtain
\begin{equation}\label{eqn:infinitesimal-deformation-equation}
Z_{\oneb}\dot{\psi}^k + \varphi_{\oneb}{}^1 Z_1 \psi^k=0, \quad k=1,2
\end{equation}
where $\dot{\psi}^k=\left.\frac{d}{dt}\right|_{t=0}\psi^k_t$, and $\psi^k=\psi^k_0$ (the $k^{th}$ component of the initial embedding). Writing $\dot{\psi}= f T + V^1 Z_1 \; \mathrm{mod}\; T^{0,1} \in \Gamma(T_{(1,0)})$, as a section of $T\mathbb{C}^2|_M$ we have
\begin{equation*}
\dot{\psi} = (\mathrm{Re}\, f)T + (\mathrm{Im}\, f) JT + V^1 Z_1 + V^{\oneb}Z_{\oneb}.
\end{equation*}
Letting $(z^1,z^2)$ denote the coordinates for $\mathbb{C}^2$ and evaluating $\dee z^k$ on the above display (noting that $\dee z^k(JT)=i\dee z^k(T)$ and $\dee z^k(Z_{\oneb})=0$) we obtain $\dot{\psi}^k = f \dee z^k(T) + V^1 \dee z^k(Z_1)$, $k=1,2$. If we think of the coordinates $z^k$ as maps from $\mathbb{C}^2$ to $\mathbb{C}$, then restricting to $M$ we have $\psi^k=z^k: M\to \mathbb{C}$. For a (real or complex) vector field $V$ tangent to $M$ we therefore have $\dee z^k(V)= V\psi^k$, and thus
\begin{equation*}
\dot{\psi}^k = f T\psi^k + V^1 Z_1\psi^k, \quad k=1,2.
\end{equation*}
Using that $Z_{\oneb}\psi^k=0$, $k=1,2$, we therefore have
\begin{align}\label{eqn:Z1bar-on-psik}
Z_{\oneb} \dot{\psi}^k &= (Z_{\oneb}f)T\psi^k + f Z_{\oneb}T\psi^k + (Z_{\oneb}V^1)Z_1\psi^k + V^1 Z_{\oneb}Z_1\psi^k\\
\nonumber &= (Z_{\oneb}f)T\psi^k + f [Z_{\oneb},T]\psi^k + (Z_{\oneb}V^1)Z_1\psi^k + V^1 [Z_{\oneb},Z_1]\psi^k.
\end{align}
From the structure equations \cref{eqn:h11-definition} and \cref{eqn:pseudohermitian-connection1} it is straightforward to compute that
\begin{equation*}
[Z_{\oneb},Z_1]= ih_{1\oneb}T + \omega_1{}^1(Z_{\oneb})Z_1 - \omega_{\oneb}{}^{\oneb}(Z_1)Z_{\oneb},
\quad \mathrm{and} \quad
[Z_{\oneb},T] = A^1{}_{\oneb}Z_1 - \omega_{\oneb}{}^{\oneb}(T)Z_{\oneb}.
\end{equation*}
Substituting \cref{eqn:Z1bar-on-psik} into \cref{eqn:infinitesimal-deformation-equation} we therefore obtain
\begin{equation*}
(Z_{\oneb}f + ih_{1\oneb}V^1)T\psi^k + (Z_{\oneb}V^1 + f A^1{}_{\oneb} + V^1\omega_1{}^1(Z_{\oneb}) + \varphi_{\oneb}{}^1)Z_1\psi^k =0,\quad k=1,2.
\end{equation*}
Recall that $\psi=(\psi^1,\psi^2)$ is a CR embedding, so that
\begin{equation*}
\det \begin{pmatrix} Z_1\psi^1 &Z_1 \psi^2\\ T\psi^1 & T\psi^2
\end{pmatrix}\neq 0
\end{equation*}
on $M$.
It follows that $Z_{\oneb}f + ih_{1\oneb}V^1=0$, so that $V^1=ih^{1\oneb}Z_{\oneb}f=i\nabla^1f$, which proves the first statement of the lemma. Similarly, it follows that
$Z_{\oneb}V^1 + f A^1{}_{\oneb} + V^1\omega_1{}^1(Z_{\oneb}) + \varphi_{\oneb}{}^1=0$ and hence
\begin{equation*}
\varphi_{\oneb}{}^1 = -\nabla_{\oneb}V^1 -fA^1{}_{\oneb} = -i\nabla_{\oneb}\nabla^1 f -fA^1{}_{\oneb}.
\end{equation*}
Conjugating this gives the formula for $\varphi_{1}{}^{\oneb}$, as claimed.
\end{proof}
The following lemma shows that the real part of $\theta(\dot{\psi})$ depends only on the contact parametrization of the smooth family $\psi_t(M)$ of strictly pseudoconvex hypersurfaces in $\mathbb{C}^2$, and can be taken to be zero.
\begin{lemma}\label{lem:making-psi-dot-imaginary}
Let $\psi_t:M\to \mathbb{C}^2$, $t\in[0,\epsilon)$, be a contact parametrized deformation of the compact strictly pseudoconvex hypersurface $M\subset \mathbb{C}^2$. Then there is a smooth family $\varphi_t$, $t\in[0,\epsilon)$, of contact diffeomorphisms of $M$ which reparametrizes $\psi_t$ to $\psi'_t = \psi_t\circ \varphi_t$ such that $\theta(\dot{\psi'})$ is imaginary, where $\theta$ is any pseudohermitian structure $\theta$ on $M$.
\end{lemma}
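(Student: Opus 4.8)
The plan is to absorb the real part of $\theta(\dot\psi)$ into a reparametrization obtained by flowing along an appropriately chosen contact Hamiltonian vector field. Set $u:=\mathrm{Re}\,\big(\theta(\dot\psi)\big)$, a real-valued smooth function on the compact manifold $M$, and let $V_u$ be the contact Hamiltonian vector field with potential $u$. Since $V_u$ is an infinitesimal contact diffeomorphism of $(M,H)$ and $M$ is compact, the flow $\varphi_t$ of $-V_u=V_{-u}$ is defined for all $t\in\bbR$ (in particular on $[0,\epsilon)$), satisfies $\varphi_0=\mathrm{id}$, and consists of contact diffeomorphisms of $(M,H)$. Put $\psi'_t:=\psi_t\circ\varphi_t$. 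Then $\psi'_t(M)=\psi_t(M)$ and $\psi'_t:M\to\psi'_t(M)$ is a contact diffeomorphism onto a strictly pseudoconvex hypersurface, so $\psi'_t$ is again a contact parametrized deformation of $M$; indeed the CR structure it induces on $M$ at time $t$ is $\varphi_t^*J_t$, whose contact distribution is $\varphi_t^*H=H$.

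Next I would differentiate at $t=0$. Using $\psi_0=\mathrm{id}_M$, so that $d\psi_0$ is the inclusion $TM\hrarrow T\bbC^2|_M$, together with $\left.\tfrac{d}{dt}\right|_{t=0}\varphi_t=-V_u$, the chain rule gives
\begin{equation*}
\dot{\psi'}=\dot\psi+(d\psi_0)\!\left(\left.\tfrac{d}{dt}\right|_{t=0}\varphi_t\right)=\dot\psi-V_u
\end{equation*}
as sections of $T\bbC^2|_M$, and hence also after projecting to $T_{(1,0)}=\bbC TM/T^{0,1}$. It remains to apply $\theta$. Under the identification of $T\bbC^2|_M$ with $T_{(1,0)}$ recalled above, a real vector field on $M$ is sent to its class mod $T^{0,1}$; combined with the local formula $V_u=uT+iu^1Z_1-iu^{\oneb}Z_{\oneb}$ this gives $V_u\equiv uT+iu^1Z_1\;\;(\mathrm{mod}\;T^{0,1})$, hence $\theta(V_u)=u$. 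Therefore
\begin{equation*}
\theta(\dot{\psi'})=\theta(\dot\psi)-u=\theta(\dot\psi)-\mathrm{Re}\,\big(\theta(\dot\psi)\big)=i\,\mathrm{Im}\,\big(\theta(\dot\psi)\big),
\end{equation*}
which is purely imaginary. Reading the same computation backwards shows, more generally, that any contact reparametrization alters $\mathrm{Re}\,\theta(\dot\psi)$ by the freely chosen real potential of $\left.\tfrac{d}{dt}\right|_{t=0}\varphi_t$, so $\mathrm{Re}\,\theta(\dot\psi)$ is pure gauge, which is the ``depends only on the contact parametrization'' assertion preceding the lemma.

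There is no real obstacle here: the argument is a short bookkeeping exercise built on the two preceding lemmas (the local form of contact Hamiltonian vector fields and the Bland--Epstein formula \cref{lem:variational-vector} for $\dot\psi$). The one point that must be handled with a little care is keeping the identification $T\bbC^2|_M\cong T_{(1,0)}$ straight when $V_u$ is viewed first as a vector field on $M$ and then as a section of $T\bbC^2|_M$, so that $\theta(V_u)$ comes out equal to the \emph{real} function $u$; this is exactly where $\mathrm{Im}\,u=0$ enters, as it forces the $JT$-component in the identification to vanish. Once that identification is pinned down, the conclusion $\theta(\dot{\psi'})=i\,\mathrm{Im}\,\theta(\dot\psi)$ is immediate.
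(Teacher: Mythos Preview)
Your proof is correct and follows essentially the same approach as the paper: both take $\varphi_t$ to be the flow of the contact Hamiltonian vector field with potential $-\mathrm{Re}\,\theta(\dot\psi)$ and then compute $\dot{\psi'}=\dot\psi-V_u$, from which $\theta(\dot{\psi'})=i\,\mathrm{Im}\,\theta(\dot\psi)$ is immediate. The paper's version is slightly terser, working modulo $\mathbb{C}\otimes H$ rather than tracking the $T_{(1,0)}$ identification explicitly, but the content is the same.
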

\begin{proof}
Fix any pseudohermitian structure $\theta$ for the initial CR structure $(M,H,J)$, and let $f=\theta(\dot{\psi})$.
Then $\left.\frac{d}{dt}\right|_{t=0}\psi_t =  (\mathrm{Re}\, f)T + (\mathrm{Im}\, f) JT \; \mathrm{mod}\; \mathbb{C}\otimes H$,
where here $J$ denotes the standard complex structure on $\mathbb{C}^2$. Let $V$ be the contact Hamiltonian vector field with potential $-\mathrm{Re}\,f$, and let $\varphi_t$ be the flow of $V$. Then $\left.\frac{d}{dt}\right|_{t=0}\varphi_t = -(\mathrm{Re}\, f)T \; \mathrm{mod}\; H$. Since $\psi_0=\varphi_0=\mathrm{id}_M$ we have $\left.\frac{d}{dt}\right|_{t=0}\psi'_t = \left.\frac{d}{dt}\right|_{t=0}\psi_t+\left.\frac{d}{dt}\right|_{t=0}\varphi_t$ and the result follows.
\end{proof}

\section{Variation of the Burns-Epstein invariant}\label{sec:Burns-Epstein}

In \cite{BurnsEpstein1988} Burns and Epstein defined a global invariant of compact CR $3$-manifolds whose holomorphic tangent bundle is trivial, by analogy with the Chern-Simons invariant for a (conformal) Riemannian $3$-manifold. Let $\mathcal{G}\to M$ denote the CR Cartan structure bundle of $(M,H,J)$.  ($\mathcal{G}$ is denoted $Y$ in \cite{ChernMoser1974}.) Let $\omega \in \Omega^1(\mathcal{G},\mathfrak{su}(2,1))$ denote the CR Cartan connection, and let $K=\dee \omega + \omega\wedge \omega$ denote its curvature. The Chern form $c_2(K)=\frac{1}{8\pi^2}\mathrm{tr}(K\wedge K)$ is a closed, basic $4$-form on $\mathcal{G}$. If
\begin{equation*}
Tc_2(\omega)=\frac{1}{8\pi^2}(\omega\wedge K + \frac{1}{3}\omega\wedge\omega\wedge\omega)
\end{equation*}
then from the definition of $K$ one has $\dee\, Tc_2(\omega) = c_2(K)$. But $c_2(K)$ vanishes since $\dim M=3$. So $Tc_2(\omega)$ is a closed, CR invariant $3$-form on $\mathcal{G}$. The bundle $\mathcal{G}\to M$ is a trivial extension of the frame bundle $\mathcal{G}_0\to M$ of the holomorphic tangent bundle $T^{1,0}$, $\mathcal{G}\cong \mathcal{G}_0\times \mathbb{H}_1$, where $\mathbb{H}_1$ is the real $3$-dimensional Heisenberg group. It follows that $\mathcal{G}\to M$ admits global sections if and only if the holomorphic tangent bundle $T^{1,0}$ is trivial. In \cite{BurnsEpstein1988} Burns and Epstein showed that if one pulls $Tc_2(\omega)$ back to $M$ via two different sections of $\mathcal{G}$ corresponding to global admissible coframes with $h_{1\oneb}=1$ as in \cite{Webster1978}, then the resulting $3$-forms on $M$ differ by an exact form. Given any such section $\varsigma:M\to\mathcal{G}$, one may define
\begin{equation*}
\mu = \mu(M) := \int_M \varsigma^* Tc_2(\omega),
\end{equation*}
which is then a global CR invariant, known as the \emph{Burns-Epstein invariant} of $(M,H,J)$. As remarked in \cite{BurnsEpstein1988}, if one only assumes that $c_1(T^{1,0})$ is zero in $H^2(M,\mathbb{R})$ (i.e. one allows $c_1(T^{1,0})$ to be a torsion class) then there is some $k\in\mathbb{N}$ such that $(T^{1,0})^k$ is trivial, so one may take a $k$-fold multi-section of $\mathcal{G}$ and integrate $\frac{1}{k}Tc_2(\omega)$ over the image to define $\mu$. For further extensions of this invariant, including to higher dimensions, see \cite{BiquardHerzlich2005, BurnsEpstein1990a, ChengLee1990, Marugame2016}.

In $3$-dimensions, the total $Q'$-curvature of a pseudo-Einstein CR manifold is a scalar multiple of the Burns-Epstein invariant. In this case, the pseudohermitian structure given by $\theta$ is said to be \emph{pseudo-Einstein} if $\nabla_1 R - i \nabla^1A_{11}=0$; if $\theta$ is a pseudo-Einstein contact form then
$Q'=\Delta_b R+\frac{1}{2}R^2-2|A|^2$,
where $\Delta_b$ is the sub-Laplacian, $|A|^2=A_{11}A^{11}$, and the Burns-Epstein invariant is given by \cite{CaseYang2013, Hirachi2014}
\begin{equation*}
\mu=-\frac{1}{8\pi^2}\int_M Q'\,\theta\wedge\dee\theta.
 \end{equation*} The total $Q'$-curvature, $\overline{Q'}=\int_M Q'\,\theta\wedge\dee\theta$, is known to give a different generalization of the Burns-Epstein invariant to higher dimensions.

As with the contact distribution, there is no loss of generality in holding the Cartan structure bundle $\mathcal{G}\to M$ fixed when considering deformations of CR structures on $M$. A smooth family $(M,H,J_t)$, $t\in [0,\epsilon)$, of CR structures on $M$ gives rise to a corresponding family $\omega_t$ of Cartan connections on $\mathcal{G}$. Letting $\mu_t$ denote the Burns-Epstein invariant corresponding to $J_t$ and $\dot{\omega}=\left.\frac{d}{dt}\right|_{t=0} \omega_t$, from the definition of $Tc_2$ and of $K$ one obtains \cite[Proposition 3.3]{BurnsEpstein1988}
\begin{equation*}
\left.\frac{d}{dt}\right|_{t=0}\mu_t = -\frac{1}{4\pi^2}\int_M \mathrm{tr}\,(\dot{\omega}\wedge K),
\end{equation*}
where $K$ is the curvature of $\omega=\omega_0$ (and we are implicitly pulling back the integrand by a section of $\mathcal{G}\to M$). Using the framework of \cite{Webster1978}, Burns and Epstein then compute (see also \cite[Proposition 2.6]{ChengLee1990}) that if $E=\left.\frac{d}{dt}\right|_{t=0} J_t$ is given locally in terms of $\varphi_1{}^{\oneb}$ by \cref{eqn:deformation-tensor-local} then
\begin{equation}\label{eqn:Burns-Epstein-first-variation}
\left.\frac{d}{dt}\right|_{t=0}\mu_t = \frac{1}{4\pi^2}\int_M (\varphi_1{}^{\oneb}Q_{\oneb}{}^1 + \varphi_{\oneb}{}^1Q_1{}^{\oneb})\,\theta\wedge \dee\theta.
\end{equation}
In particular we see that the only critical points of $\mu$ are the locally spherical CR structures.

For a compact strictly pseudoconvex hypersurface in $\mathbb{C}^2$, the real first Chern class of the holomorphic tangent bundle is the zero class, so the Burns-Epstein invariant is always defined. As in \cite{Hirachi2014} (cf.\ \cite[Theorem 1.2]{HirachiMarugameMatsumoto2017}) we observe:
\begin{lemma}\label{lem:Burns-Epstein-infinitesimal-wiggle}
Let $\psi_t:M\to \mathbb{C}^2$, $t\in[0,\epsilon)$, be a contact parametrized deformation of the compact strictly pseudoconvex hypersurface $M\subset \mathbb{C}^2$, and let $\mu_t$ be the Burns-Epstein invariant of $\psi_t(M)\subset \mathbb{C}^2$. Then
\begin{equation*}
\left.\frac{d}{dt}\right|_{t=0}\mu_t = \frac{3}{2\pi^2}\,\mathrm{Im}\int_M f\mathcal{O}
\end{equation*}
where $f=\btheta(\dot{\psi})$, and the integrand is regarded as a density.
\end{lemma}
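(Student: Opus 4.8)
The plan is to combine the general first-variation formula for the Burns-Epstein invariant, namely \cref{eqn:Burns-Epstein-first-variation}, with the explicit description of the infinitesimal deformation tensor of an embedded (contact parametrized) deformation provided by \cref{lem:variational-vector}, and then integrate by parts to recognize the obstruction density $\mathcal{O}$ via \cref{eqn:obstruction-density}. Fix a pseudohermitian structure $\theta$ on $M$ and write $f = \btheta(\dot\psi) = \theta(\dot\psi)$, a complex function. By \cref{lem:variational-vector}, the initial deformation tensor $E=\dot J$ of the family $(M,H,J_t)$ induced by $\psi_t$ is given by \cref{eqn:deformation-tensor-local} with $\varphi_{\oneb}{}^1 = -i(\nabla_{\oneb}\nabla^1 f - iA_{\oneb}{}^1 f)$ and $\varphi_1{}^{\oneb} = \overline{\varphi_{\oneb}{}^1} = i(\nabla_1\nabla^{\oneb}\bar f + iA_1{}^{\oneb}\bar f)$. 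Substituting these into \cref{eqn:Burns-Epstein-first-variation} gives
\begin{equation*}
\left.\frac{d}{dt}\right|_{t=0}\mu_t = \frac{1}{4\pi^2}\int_M \Big( i(\nabla_1\nabla^{\oneb}\bar f + iA_1{}^{\oneb}\bar f)\,Q_{\oneb}{}^1 - i(\nabla_{\oneb}\nabla^1 f - iA_{\oneb}{}^1 f)\,Q_1{}^{\oneb}\Big)\,\theta\wedge\dee\theta.
\end{equation*}
Note the two terms are complex conjugates of each other (using $Q_{11}$ symmetric, $\overline{Q_{11}} = Q_{\oneb\oneb}$, and $\overline{A_{11}} = A_{\oneb\oneb}$), so the integrand is $2\,\mathrm{Re}$ of one of them, or equivalently the whole expression can be written as an imaginary part; this is where the $\mathrm{Im}$ in the statement will come from.

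Next I would integrate by parts twice on the term $\int_M (\nabla_{\oneb}\nabla^1 f)\,Q_1{}^{\oneb}\,\theta\wedge\dee\theta$ to move both covariant derivatives off $f$ and onto $Q$. The relevant divergence formula is that for a one-form (or appropriately weighted tensor) $\int_M (\nabla_{\oneb} X^{\oneb})\,\theta\wedge\dee\theta = 0$ on a closed $M$ (and similarly for the $\nabla_1$ divergence), valid because $\theta\wedge\dee\theta$ is the natural volume form and $\nabla$ is the Tanaka-Webster connection; one must be careful to carry the extra torsion terms that arise when commuting derivatives and when the integrand is not a genuine density, but these are bookkeeping. After integrating by parts, the $\int_M (\nabla_{\oneb}\nabla^1 f)Q_1{}^{\oneb}$ term becomes $\int_M f\,\nabla^1\nabla_{\oneb}Q_1{}^{\oneb}\,\theta\wedge\dee\theta$ up to curvature/torsion corrections, and combining with the $-iA_{\oneb}{}^1 f\,Q_1{}^{\oneb}$ term and raising/lowering indices should produce exactly $f\,(\nabla^1\nabla^1 Q_{11} - iA^{11}Q_{11})$ inside the integral, which by \cref{eqn:obstruction-density} equals $3 f\,\mathcal{O}$. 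Tracking constants, $\frac{1}{4\pi^2}\cdot(\text{factor } 3) = \frac{3}{4\pi^2}$, and the $\mathrm{Re}$/$\mathrm{Im}$ accounting together with the factor $2$ from the two conjugate terms should yield $\frac{3}{2\pi^2}\mathrm{Im}\int_M f\,\mathcal{O}$.

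The main obstacle I expect is getting the integration-by-parts bookkeeping exactly right: the quantities $\varphi_1{}^{\oneb}$, $f$, $Q_{11}$, $\mathcal{O}$ all carry CR weights, so $\varphi_1{}^{\oneb}Q_{\oneb}{}^1$ needs to be weight $(-2,-2)$ for $\int_M(\cdot)\,\theta\wedge\dee\theta$ to be CR invariant, and one must use the right weighted divergence identities (and keep track of the $A$-torsion terms generated by the commutators $[\nabla_1,\nabla_{\oneb}]$ and by moving derivatives past the curvature) so that the stray terms cancel precisely against the $A$-dependent pieces of $\DJ$-type expressions. A clean way to organize this is to first verify the identity at the level of densities independent of $\theta$ (so that one may choose $\theta$ conveniently, e.g. with $h_{1\oneb}=1$), then note that by \cref{lem:making-psi-dot-imaginary} one may even assume $f$ is purely imaginary, which simplifies the $\mathrm{Re}$/$\mathrm{Im}$ manipulation (though the final formula, being stated with an explicit $\mathrm{Im}$, holds without this reduction). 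The Bianchi identity \cref{eqn:Q11-Bianchi}, which says $\mathrm{Im}(\nabla^1\nabla^1 Q_{11} - iA^{11}Q_{11}) = 0$, i.e. $\mathcal{O}$ is in fact real, may be invoked to confirm consistency and to simplify the real-part/imaginary-part accounting in the last step.
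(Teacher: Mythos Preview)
Your approach is correct and essentially identical to the paper's: substitute the expression for $\varphi_{\oneb}{}^1$ from \cref{lem:variational-vector} into \cref{eqn:Burns-Epstein-first-variation}, integrate by parts twice, and identify $\mathcal{O}$ via \cref{eqn:obstruction-density}. One simplification: the integration by parts is entirely clean---there are no torsion or commutator corrections to track. After raising indices so that the integrand reads $\varphi^{11}Q_{11}=-i(\nabla^1\nabla^1 f - iA^{11}f)Q_{11}$, the standard divergence identity $\int_M (\nabla^1 X_1)\,\theta\wedge\dee\theta=0$ (Lee's (2.18)) moves each $\nabla^1$ from $f$ to $Q_{11}$ with no extra terms, giving directly $-if(\nabla^1\nabla^1 Q_{11}-iA^{11}Q_{11})=-3if\mathcal{O}$; then $\frac{1}{2\pi^2}\mathrm{Re}(-3i\int f\mathcal{O})=\frac{3}{2\pi^2}\mathrm{Im}\int f\mathcal{O}$.
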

\begin{proof}
Let $(M,H,J_t)$ denote the CR structure on $M$ obtained by pulling back the CR structure on the strictly pseudoconvex hypersurface $\psi_t(M)\subset \mathbb{C}^2$ via $\psi_t$. Let $(\theta,\theta^1,\theta^{\oneb})$ be an admissible coframe, which we can assume, without loss of generality, to be global. Let $E=\left.\frac{d}{dt}\right|_{t=0}J_t$ be given in terms of $\varphi_1{}^{\oneb}$ by \cref{eqn:deformation-tensor-local}. Then by \cref{lem:variational-vector} we have $\varphi_{\oneb}{}^{1} = -i(\nabla_{\oneb}\nabla^{1}  f - i A_{\oneb}{}^{1} f)$, where $f$ is thought of as a function using the trivialization of $\cE_{\mathbb{R}}(1,1)$ induced by $\theta$.
Then, using \cref{eqn:Burns-Epstein-first-variation} and integrating by parts (using (2.18) of \cite{Lee1988}, a well known consequence of Stokes' theorem)
\begin{align*}
\left.\frac{d}{dt}\right|_{t=0}\mu_t &= \frac{1}{2\pi^2}\, \mathrm{Re}\int_M \varphi^{11}Q_{11}\,\theta\wedge \dee\theta\\
&= \frac{1}{2\pi^2}\, \mathrm{Re}\int_M -i(\nabla^1\nabla^1 f - i A^{11}f)Q_{11}\,\theta\wedge \dee\theta\\
&= \frac{1}{2\pi^2}\, \mathrm{Re}\int_M -if(\nabla^1\nabla^1 Q_{11}- i A^{11}Q_{11})\,\theta\wedge \dee\theta\\
&= \frac{3}{2\pi^2}\, \mathrm{Im}\int_M f\mathcal{O}\,\theta\wedge \dee\theta,
\end{align*}
as required.
\end{proof}
\begin{remark}
Note that, since $\mathcal{O}$ is real, the formula for $\left.\frac{d}{dt}\right|_{t=0}\mu_t$ in \cref{lem:Burns-Epstein-infinitesimal-wiggle} depends only on the imaginary part of $f$. This makes sense, as $\mathrm{Re}\,f$ depends on the particular contact parametrization of the deformation (cf.\ \cref{lem:making-psi-dot-imaginary}), whereas $\mu_t$ depends only on the family of strictly pseudoconvex hypersurfaces $\psi_t(M)\subset \mathbb{C}^2$.
\end{remark}

In order to prove \cref{thm:DeformationRigidity} we will use:
\begin{theorem}[\cite{ChengLee1995}]\label{thm:ChengLee1995}
The CR 3-sphere is a strict local minimizer for the Burns-Epstein invariant.
\end{theorem}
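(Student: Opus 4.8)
The plan is to exhibit the standard CR structure $J_0$ on $S^3$ as a critical point of $\mu$ which is nondegenerate in the directions transverse to the trivial (gauge) deformations, and then to convert the resulting second-order lower bound into a strict local minimum. For criticality: the round sphere is spherical, so its Cartan umbilical tensor $Q$ vanishes identically, and hence \cref{eqn:Burns-Epstein-first-variation} gives $\left.\frac{d}{dt}\right|_{t=0}\mu_t=0$ for every smooth family $(S^3,H,J_t)$ with $J_0$ the standard structure (as already noted after \cref{eqn:Burns-Epstein-first-variation}, the critical points of $\mu$ are precisely the locally spherical structures).

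\emph{The Hessian.} Given $E\in\Gamma(\mathcal{D}ef(S^3))$, written via \cref{eqn:deformation-tensor-local} in terms of $\varphi_1{}^{\oneb}$, take the family $J_t=(1+t^2|E|^2)^{1/2}J_0+tE$. Differentiating \cref{eqn:Burns-Epstein-first-variation} at $t=0$ and using $Q\equiv 0$ at $J_0$, so that the only surviving contribution comes from the variation of $Q_t$ itself, gives
\begin{equation*}
\left.\frac{d^2}{dt^2}\right|_{t=0}\mu_t \;=\; \frac{1}{2\pi^2}\,\mathrm{Re}\int_{S^3}\varphi^{11}\,\dot{Q}_{11}\,\theta\wedge\dee\theta ,
\end{equation*}
where $\dot{Q}_{11}=\left.\frac{d}{dt}\right|_{t=0}(Q_t)_{11}$. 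Linearizing \cref{eqn:Cartan-umbilical-tensor} at $J_0$, where $A_{11}=0$ and $R$ is constant, collapses it to $\dot{Q}_{11}=-\frac{1}{6}\nabla_1\nabla_1\dot{R}-\frac{i}{2}R\,\dot{A}_{11}+\nabla_0\dot{A}_{11}+\frac{2i}{3}\nabla_1\nabla^1\dot{A}_{11}$, and linearizing the structure equations \cref{eqn:h11-definition}--\cref{eqn:pseudohermitian-connection2} (as in \cite{ChengLee1990}) writes $\dot{A}_{11}$ and $\dot{R}$ as explicit second-order differential expressions in $\varphi_1{}^{\oneb}$ and $\varphi_{\oneb}{}^1$. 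Hence $\mathcal{H}(\varphi):=\mathrm{Re}\int_{S^3}\varphi^{11}\dot{Q}_{11}[\varphi]\,\theta\wedge\dee\theta$ is an explicit fourth-order quadratic form, and since trivial deformations leave $Q$ unchanged it depends only on the class of $E$ in $\mathcal{D}ef(S^3)/\mathrm{im}(\DJ)$.

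\emph{Positivity.} The form $\mathcal{H}$ is invariant under the $\mathrm{SU}(2,1)$-action on $S^3$, hence diagonalizes on the decomposition of $\Gamma(\mathcal{D}ef(S^3))$ into joint eigenspaces of the Kohn Laplacian $\square_b$ and the Reeb derivative (the ``spin-weighted'' spherical harmonics of $S^3$), with eigenvalues computable from Folland's spectrum of $\square_b$ on the sphere. The goal is to recognize $\dot{Q}_{11}[\varphi]$, after lowering indices, as $c\,\mathcal{L}\varphi$ for a non-negative formally self-adjoint operator $\mathcal{L}$ on $\mathcal{D}ef(S^3)$ whose kernel is exactly $\mathrm{im}(\DJ)$ and which has a spectral gap off its kernel; then $\mathcal{H}(\varphi)=c\langle\varphi,\mathcal{L}\varphi\rangle_{L^2}\ge\lambda_1\|\varphi\|_{L^2}^2$ on any fixed complement of $\mathrm{im}(\DJ)$, with equality only when $E$ is a trivial infinitesimal deformation.

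\emph{Conclusion and main obstacle.} Granting the positivity above, one fixes a slice for the action of contact diffeomorphisms (e.g.\ via the slice theorem of Ch\^eng and Lee \cite{ChengLee1995}), so that CR structures on $(S^3,H)$ near $J_0$ are parametrized by a neighborhood of $0$ in a complement of $\mathrm{im}(\DJ)$; a Taylor expansion of $\mu$ about $J_0$, together with elliptic estimates to absorb the cubic-and-higher remainder against the coercive quadratic term, then gives $\mu(J)\ge\mu(J_0)$ for $J$ near $J_0$, with equality forcing $J$ to be CR equivalent to $J_0$. The crux---and the main obstacle---is the positivity step: beyond the tedious but mechanical computation of the linearized Cartan tensor, one must verify that $\mathcal{H}$ has no null directions other than $\mathrm{im}(\DJ)$, i.e.\ that no obstructed infinitesimal deformation (a nonzero class in the first cohomology of the complex \cref{eqn:DeformationComplex}) lies in its kernel; were such a direction present, one would have to expand $\mu$ to higher order along it and use the rigidity of the spherical structure on the simply connected $S^3$ to rule out genuine locally spherical deformations. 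The explicit spectral computation on $S^3$ is precisely what rules this out.
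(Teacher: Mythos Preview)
The paper does not give its own proof of this theorem; it is quoted from \cite{ChengLee1995}, and the paragraph following the statement only summarizes the strategy: positive definiteness of the second variation orthogonal to the contact-diffeomorphism orbit was established in \cite{ChengLee1990}, and in \cite{ChengLee1995} a local slice theorem was proved and used to promote the Hessian estimate to a strict local minimum. Your sketch follows exactly this outline---criticality from $Q\equiv 0$, the Hessian as a fourth-order quadratic form in the deformation tensor, positivity via the spherical harmonic decomposition on $S^3$, and the slice-theorem endgame---so it is aligned with the paper's summary and with the original Ch\^eng--Lee argument.

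One remark on a subtlety you flag but slightly misstate: you worry about ``obstructed infinitesimal deformations (a nonzero class in the first cohomology of \cref{eqn:DeformationComplex})'' possibly lying in the kernel of $\mathcal{H}$. On the CR $3$-sphere this cohomology group vanishes (the complex has the cohomology $H^*(S^3,\mathbb{R})\otimes\mathfrak{su}(2,1)$, cf.\ \cref{eqn:exactness-in-deformation-complex}), so there are no such classes; the genuine content of the positivity step is rather the spectral gap of the linearized curvature operator off $\mathrm{im}\,\DJ$, which is what the explicit spherical-harmonic computation in \cite{ChengLee1990} supplies. With that correction, your sketch is an accurate synopsis of the cited proof.
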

\cref{thm:ChengLee1995} represents the culmination of the work of Ch\^eng and Lee in \cite{ChengLee1990, ChengLee1995}. In \cite{ChengLee1990} it was established that the second variation of the Burns-Epstein invariant at the standard CR 3-sphere is positive definite for infinitesimal deformations orthogonal to the orbit of the contact diffeomorphism group (as here the deformations are taken to fix the underlying contact structure). In \cite{ChengLee1995} a local slice theorem for the space of CR structures on $(M,H)$ under the action of the contact diffeomorphism group was established and used to prove that (in the slice) the second variation of the Burns-Epstein invariant at the sphere gives a suitably good approximation to the Burns-Epstein invariant to imply that the sphere is a strict local minimizer.

\begin{proof}[Proof of \cref{thm:DeformationRigidity}]
Let $\Omega_t$, $t\in [0,1]$, be a smooth family of smooth bounded strictly pseudoconvex domains in $\mathbb{C}^2$, with $\Omega_0$ being the unit ball. Assume $\partial\Omega_t$ is obstruction flat for all $t$. By \cref{lem:Burns-Epstein-infinitesimal-wiggle}, the Burns-Epstein invariant $\mu(\partial\Omega_t)$  then remains constant for all $t$ and, hence, $\mu(\partial\Omega_t)=\mu(\partial\Omega_0)$. But the Burns-Epstein invariant is a strict local minimizer for the CR $3$-sphere $\partial\Omega_0$ by the Ch\^eng and Lee result \cref{thm:ChengLee1995}. Hence $\partial\Omega_t$ must be globally CR equivalent to the unit sphere $\partial\Omega_0$, for all $t$. Since $\Omega_0$ is simply connected with connected boundary, by continuity the same must be true for each $\Omega_t$. It then follows by the Riemann mapping theorem of \cite{ChernJi1996} that each $\Omega_t$ is biholomorphic to the unit ball $\Omega_0$.
\end{proof}

\begin{remark}
As remarked after the statement of \cref{thm:DeformationRigidity}, the conclusion of the theorem can be easily improved to state that there exists a smooth family $\Phi_t:\Omega_t\to\Omega_0$ of biholomorphisms, $t\in [0,1]$. To see this, note that one may contact parametrize the boundary deformation and pull the boundary CR structures back to $S^3$, giving a smooth family of CR structures with fixed underlying contact distribution. Since each of these CR structures must be spherical, the slice theorem of Ch\^eng and Lee \cite{ChengLee1995} says that one may find a smooth family of contact diffeomorphism of $S^3$ parametrizing them. One can then use these to reparametrize the contact parametrization of the deformation so that it becomes a parametrization by CR diffeomorphisms. This parametrization then extends to the domains as a parametrization by biholomorphisms.
\end{remark}

\section{A deformation complex on the CR 3-sphere}\label{sec:DeformationComplex}

Here we describe in detail the deformation complex \cref{eqn:DeformationComplex} on the CR $3$-sphere, and use it to prove \cref{thm:FormalDeformations}.

\subsection{The linearized curvature operator on the sphere}\label{subsec:Linearized-Q11-at-sphere}

Let $(M,H,J)$ be a compact CR $3$-manifold. We have already introduced the operator $-2\DJ:\cE_{\mathbb{R}}(1,1)\to \mathcal{D}ef(M)$ which gives the infinitesimal deformation tensor $E=-2\DJ f=\mathcal{L}_{V_f} J$ arising from pulling back the CR structure by the flow of the contact Hamiltonian vector field $V_f$ with potential $f$. Being a weight $(-2,-2)$ CR invariant, by \cref{eqn:Q-endomorphism} the Cartan curvature $Q$ of $(M,H,J)$ is a section of the bundle $\mathcal{C}urv(M)= \mathcal{D}ef(M)\otimes \cE_{\mathbb{R}}(-2,-2)$. If $(M,H,J_t)$, $t\in[0,\epsilon)$, is a smooth family of CR structures on $M$ with $J_0=J$, then $\dot{Q}=\left.\frac{d}{dt}\right|_{t=0}Q_t$ will not in general be a section of $\mathcal{C}urv(M)$, but only of the larger bundle $\mathrm{End}(H)\otimes \cE_{\mathbb{R}}(-2,-2)$. If $(M,H,J)$ is locally spherical, however, then $\dot{Q}\in \Gamma(\mathcal{C}urv(M))$ since the vanishing of $Q$ implies locally
\begin{equation*}
\dot{Q}=i\dot{Q}_1{}^{\oneb}\theta^1\otimes Z_{\oneb} - i\dot{Q}_{\oneb}{}^1\theta^{\oneb}\otimes Z_1.
\end{equation*}
For the CR $3$-sphere, we denote the CR invariant linearized curvature operator $\mathcal{D}ef(M)\to\mathcal{C}urv(M)$ by $\RJ$. We will explicitly compute this operator in terms of the standard pseudohermitian structure on $S^3$.

Let $S^3$ denote the unit sphere in $\mathbb{C}^2$. Let $(z,w)$ be the standard coordinates on $\mathbb{C}^2$ and let $u=1-|z|^2-|w|^2$. The standard pseudohermitian structure on $S^3$ is given by taking $\theta = i\partial u |_{TS^3}$. A global framing of the holomorphic tangent bundle of $S^3$ is given by $Z_1 = \bar{w}\frac{\partial}{\partial z} - \bar{z}\frac{\partial}{\partial w}$, giving $\theta^1=w\dee z - z\dee w$. All forms written in ambient coordinates are implicitly pulled back to $S^3$. The contact form $\theta$ may be written as $i(z\dee \bar{z} + w\dee \bar{w})$ and we have $\dee \theta = i(\dee z \wedge \dee\bar{z} + \dee w \wedge \dee\bar{w})$. One then sees that $\dee\theta = i\theta^1\wedge\theta^{\oneb}$, e.g., by computing the two $2$-forms with respect to the frame $\{T,Z_1,Z_{\oneb}\}$. Solving the structure equation \cref{eqn:pseudohermitian-connection1} gives $\omega_1{}^1=-2i\theta$, and $A^1{}_{\oneb}=0$. Therefore $\dee \omega_1{}^1 = -2i\dee \theta = 2\theta^1\wedge \theta^{\oneb}$, so $R=2$. We now consider a smooth $1$-parameter family of CR structures on $S^3$, given in terms of the vector field
\begin{equation*}
Z_1^t = \frac{1}{\sqrt{1-|\varphi(t)|^2}}(Z_1 + \varphi_1{}^{\oneb}(t)Z_{\oneb})
\end{equation*}
spanning $^t T^{1,0}$, with $\varphi_1{}^{\oneb}(0)=0$. Then
\begin{equation*}
\theta^1_t = \frac{1}{\sqrt{1-|\varphi(t)|^2}}(\theta^1 - \varphi_{\oneb}{}^1(t)\theta^{\oneb})
\end{equation*}
where $\varphi_{\oneb}{}^1(t)=\overline{\varphi_1{}^{\oneb}(t)}$, and because of our choice of normalization we have $\theta^1_t\wedge\theta^{\oneb}_t = \theta^1\wedge\theta^{\oneb}$, so that $h_{1\oneb}(t)=1$. We write $\varphi_{\oneb}{}^1(t) = t\varphi_{\oneb}{}^1 + O(t^2)$ so that $\dot{\theta}^1=\left.\frac{d}{dt}\right|_{t=0}\theta^1_t=-\varphi_{\oneb}{}^1\theta^{\oneb}$. Writing \cref{eqn:pseudohermitian-connection1} for each $t$ as
\begin{equation*}
\dee\theta^1_t = \theta^1_t \wedge \omega_{1}{}^1(t) + A^1{}_{\oneb}(t)\theta\wedge\theta^{\oneb}_t
\end{equation*}
and differentiating with respect to $t$ at $t=0$ we have
\begin{equation*}
\dee \dot{\theta^1} = \dot{\theta}^1 \wedge \omega_1{}^1 + \theta^1 \wedge \dot{\omega}_1{}^1 + \dot{A}^1{}_{\oneb}\theta\wedge\theta^{\oneb}.
\end{equation*}
Since $\dot{\theta^1}=-\varphi_{\oneb}{}^1\theta^{\oneb}$, we also have $\dee \dot{\theta^1} = -(\dee \varphi_{\oneb}{}^1)\wedge\theta^1-\varphi_{\oneb}{}^1\dee\theta^{\oneb}$, and equating this with the above display we obtain that
\begin{equation}
\dot{A}^1{}_{\oneb}=-\nabla_0\varphi_{\oneb}{}^1
\end{equation}
and $\dot{\omega}_1{}^1 = -(\nabla_1\varphi_{\oneb}{}^1)\theta^{\oneb} \;\mathrm{mod}\; \theta^1$. But $\dot{\omega}_1{}^1$ is imaginary since $h_{1\oneb}(t)=1$ for all $t$, so
\begin{equation}
\dot{\omega}_1{}^1 = (\nabla_{\oneb}\varphi_1{}^{\oneb})\theta^1-(\nabla_1\varphi_{\oneb}{}^1)\theta^{\oneb}.
\end{equation}
Since $\dee \dot{\omega}_1{}^1 = \dot{R}\theta^1\wedge\theta^{\oneb}\;\mathrm{mod}\; \theta^1\wedge\theta, \theta^{\oneb}\wedge\theta$, we therefore obtain
\begin{equation}
\dot{R} = \nabla^1\nabla^1\varphi_{11} - \nabla^{\oneb}\nabla^{\oneb}\varphi_{\oneb\oneb}.
\end{equation}
We may now easily compute $\dot{Q}=\left.\frac{d}{dt}\right|_{t=0} Q_t$. Since $R=2$ we have
\begin{align*}
\left.\frac{d}{dt}\right|_{t=0} \nabla^t_1 \nabla^t_1 R_t & = \left.\frac{d}{dt}\right|_{t=0} (Z^t_1-Z^t_1\,\hook\,\omega_1{}^1(t))Z_1^t R_t\\
& = Z_1Z_1 \dot{R} + Z_1(\varphi_1{}^{\oneb}Z_{\oneb}R) + (\varphi_1{}^{\oneb}Z_{\oneb} - Z_1\,\hook\,\dot{\omega}_1{}^1)Z_1 R\\
& = \nabla_1\nabla_1 \dot{R}.
\end{align*}
Similarly, since $A_{11}=0$ we have
\begin{align*}
\left.\frac{d}{dt}\right|_{t=0} \nabla^t_0 A_{11}(t) &= \nabla_0 \dot{A}_{11} = -\nabla_0\nabla_0 \varphi_{11}\,; \\
\left.\frac{d}{dt}\right|_{t=0} \nabla_1^t\nabla^1_t A_{11}(t) &= \nabla_1\nabla^1 \dot{A}_{11}=-\nabla_1\nabla^1\nabla_0 \varphi_{11}.
\end{align*}
From \cref{eqn:Cartan-umbilical-tensor} we therefore obtain (cf.\ \cite{ChengLee1990})
\begin{equation}
\dot{Q}_{11}
= -\frac{1}{6}(\varphi_{11,}{}^{11}{}_{11} - \varphi_{\oneb\oneb,}{}^{\oneb\oneb}{}_{11})
 - \varphi_{11,00} - \frac{2i}{3} \varphi_{11, 0}{}^1{}_1 + \frac{i}{2}R\,\varphi_{11,0}
\end{equation}
where indices placed after a comma denote covariant derivatives, so, e.g., $\varphi_{11,}{}^{11}{}_{11}$ denotes $\nabla_1\nabla_1\nabla^1\nabla^1\varphi_{11}$. While $R=2$ here, we have retained $R$ in the expression to emphasize that the pseudohermitian curvature shows up in the last term. The fact that there is only one term in the above expression involving $\varphi_{\oneb\oneb}$ (as opposed to its conjugate $\varphi_{11}$) will be exploited in the proof of \cref{thm:FormalDeformations}. The above computation shows that if $E=2i\varphi_1{}^{\oneb}\theta^1\otimes Z_{\oneb}-2i\varphi_{\oneb}{}^1\theta^{\oneb}\otimes Z_1$ is an infinitesimal deformation tensor, then applying the CR invariant linearized curvature operator $\RJ\colon \mathcal{D}ef(M)\to\mathcal{C}urv(M)$ we obtain
\begin{equation}\label{eqn:RJ-formula-part1}
\RJ E = iF_1{}^{\oneb}\theta^1\otimes Z_{\oneb}-iF_{\oneb}{}^1\theta^{\oneb}\otimes Z_1
\end{equation}
where
\begin{equation}\label{eqn:RJ-formula-part2}
F_{11}
= -\frac{1}{6}(\varphi_{11,}{}^{11}{}_{11} - \varphi_{\oneb\oneb,}{}^{\oneb\oneb}{}_{11}) - \varphi_{11,00} - \frac{2i}{3} \varphi_{11, 0}{}^1{}_1 + \frac{i}{2}R\,\varphi_{11,0}.
\end{equation}
Recalling that for every infinitesimal deformation tensor $E$ there is a family $J_t$ with $J_0=J$ and $\dot{J}=E$, we see that if $F=\RJ E$ for some $E$ then there is a family $J_t$ with $J_0=J$ and $\dot{Q}=F$.

\subsection{Deformations and the Bianchi identity}\label{subsec:Deformations-Bianchi}

Let $(M,H,J)$ be a compact CR $3$-manifold. The last operator we need to consider is simply the adjoint of the operator $\DJ:\cE_{\mathbb{R}}(1,1)\to \mathcal{D}ef(M)$. It is natural to define the adjoint with respect to the tautological weight $(2,2)$ volume form on $(M,H,J)$ coming from the identification of $\cE_{\mathbb{R}}(-2,-2)$ with $\Lambda^3$. Fixing a contact form $\theta$ for $H$ one may also take the adjoint of $\DJ$ with respect to the volume form $\theta\wedge\dee\theta$, and this gives the same result after we trivialize the relevant density bundles using $\theta$. The advantage of the CR invariant construction is that it turns out to give a CR invariant operator
\begin{equation*}
\DJad :\mathcal{C}urv(M)\to \cE_{\mathbb{R}}(-3,-3).
\end{equation*}
Moreover, the Bianchi identity \cref{eqn:Q11-Bianchi} turns out to be equivalent to $\DJad Q =0$ (cf.\ \cite{ChengLee1990}). This explains the use of the notation $\mathcal{B}ian(M)$ for $\cE_{\mathbb{R}}(-3,-3)$ in \cref{eqn:DeformationComplex}.

To formalize these observations, we define a local pairing between sections of $\mathcal{D}ef(M)$ and $\mathcal{C}urv(M)$ by
\begin{equation*}
\langle E, F\rangle = E^{11}F_{11} + E_{11}F^{11} = 2\mathrm{Re} ( E^{11}F_{11} )
\end{equation*}
where $E = E_1{}^{\oneb}\theta^1\otimes Z_{\oneb}+E_{\oneb}{}^1\theta^{\oneb}\otimes Z_1$ and $F= F_1{}^{\oneb}\theta^1\otimes Z_{\oneb}+F_{\oneb}{}^1\theta^{\oneb}\otimes Z_1$. Note that $\langle E, F\rangle$ has weight $(-2,-2)$ and may therefore be integrated. We define a natural CR invariant global pairing between sections of $\mathcal{D}ef(M)$ and $\mathcal{C}urv(M)$ by
\begin{equation*}
(E,F) = \int_M \langle E, F\rangle.
\end{equation*}
We define $\DJad:\mathcal{C}urv(M)\to \cE_{\mathbb{R}}(-3,-3)$ to be the adjoint of $\DJ$ with respect to this global pairing. Since in a local frame the component $E^{11}$ of $E=\DJ f$ is given by $\nabla^1\nabla^1f-iA^{11}f$, integrating by parts gives
\begin{equation}\label{eqn:DJad-formula}
\DJad F = 2\mathrm{Re}(\nabla^1\nabla^1F_{11}-iA^{11}F_{11})
\end{equation}
where $F$ is given locally by $F_1{}^{\oneb}\theta^1\otimes Z_{\oneb}+F_{\oneb}{}^1\theta^{\oneb}\otimes Z_1$. Recalling \cref{eqn:Q-endomorphism} we see that the Bianchi identity \cref{eqn:Q11-Bianchi} is equivalent to $\DJad Q =0$.

Now let $(S^3,H,J)$ be the standard CR structure on $S^3$ and let $J_t$, $t\in[0,\epsilon)$, be a smooth family of CR structures on $(S^3,H)$ with $J_0=J$. For each $t$ the Cartan umbilicity tensor $Q_t$ of $J_t$ satisfies the Bianchi identity $D^*_{J_t} Q_t =0$. Let $\dot{Q}=\left.\frac{d}{dt}\right|_{t=0}Q_t$. Differentiating the Bianchi identity $D^*_{J_t} Q_t =0$ with respect to $t$ at $t=0$ we obtain
\begin{equation}\label{eqn:linearized-Bianchi}
\DJad \dot{Q} =0,
\end{equation}
which can be seen as a Bianchi identity for $\dot{Q}$.

\subsection{The deformation complex}\label{subsec:deformation-complex}

The discussions in \cref{subsec:Abstract-deformations,subsec:Linearized-Q11-at-sphere,subsec:Deformations-Bianchi} allow us to conclude that on the CR $3$-sphere the following sequence of differential operators is a differential complex:
\begin{equation}\label{eqn:DeformationComplex2}
0\longrightarrow \cE_{\mathbb{R}}(1,1) \overset{\DJ}{\longrightarrow} \mathcal{D}ef(M) \overset{\RJ}{\longrightarrow} \mathcal{C}urv(M) \overset{\DJad}{\longrightarrow} \mathcal{B}ian(M) \longrightarrow 0.
\end{equation}
That $\RJ \DJ=0$ follows from the diffeomorphism invariance of the Cartan umbilicity tensor $Q$, and the fact that $Q$ vanishes for the standard CR $3$-sphere. That $\DJad\,\RJ=0$ follows from \cref{eqn:linearized-Bianchi}. A fact which is not obvious from the previous discussion is that the above complex is locally exact. This is because, for natural reasons \cite{Cap2008}, \cref{eqn:DeformationComplex2} turns out to be the Bernstein-Gelfand-Gelfand (BGG) complex on $S^3=\mathrm{SU}(2,1)/P$ corresponding to the adjoint representation of $\mathrm{SU}(2,1)$, which gives a fine resolution of the sheaf of constant sections of the homogeneous vector bundle $\cA= \mathrm{SU}(2,1)/P \times_P \mathfrak{su}(2,1) \to S^3$. The rank $8$ vector bundle $\cA\to S^3$ is flat and trivial, and the global constant sections are in one to one correspondence with the infinitesimal symmetries of the CR $3$-sphere. Moreover, the cohomology of \cref{eqn:DeformationComplex2} is equal to the cohomology of the de Rham complex on $S^3$ twisted by $\mathcal{A}$, which is $H^*(S^3,\mathbb{R})\otimes \mathfrak{su}(2,1)$. In particular,
\begin{equation}\label{eqn:exactness-in-deformation-complex}
\mathrm{ker}\, \RJ = \mathrm{im}\, \DJ \quad \text{ and } \quad \mathrm{ker}\, \DJad = \mathrm{im}\, \RJ .
\end{equation}

Let $\mathcal{E}\to S^3$ denote the trivial complex line bundle on $S^3$. Complexifying and type decomposing \cref{eqn:DeformationComplex2} one obtains the bigraded complex
\begin{equation}\label{eqn:bigraded-deformation-complex}
\xymatrix@C+0.5pc@R-1pc{
& (T^{1,0})^*\otimes T^{0,1} \ar[r]^(.5){\mathcal{R}^{\natural}} \ar[rdd]_<<<{\mathcal{R}^{-}} & (T^{1,0})^*\otimes T^{0,1} \ar[rd]^(.65){\mathcal{D}^*} & \\
\cE \ar[ru]^(.35){\mathcal{D}} \ar[rd]_(.35){\bar{\mathcal{D}}} & & & \cE\\
& (T^{0,1})^*\otimes T^{1,0} \ar[ruu]^<<<{\mathcal{R}^+} \ar[r]_(.5){\bar{\mathcal{R}}^{\natural}} &(T^{0,1})^*\otimes T^{1,0}\ar[ru]_(.65){\bar{\mathcal{D}}^*} &
}
\end{equation}
where we have suppressed the density weights in the notation. The initial bundle is really $\cE(1,1)$, the final bundle is $\cE(-3,-3)$, and in their second appearance the bundles $(T^{1,0})^*\otimes T^{0,1}$ and $(T^{0,1})^*\otimes T^{1,0}$ should be tensored with $\cE(-2,-2)$. By \cref{eqn:DJ-formula} the operators $\mathcal{D}$ and $\bar{\mathcal{D}}$ are given locally with respect to any admissible coframe $\{\theta,\theta^1,\theta^{\oneb}\}$ by
\begin{equation}\label{eqn:calD-formula}
\mathcal{D} f = (\nabla_1\nabla^{\oneb}f+iA_1{}^{\oneb}f) \theta^1\otimes Z_{\oneb},
\quad \text{and} \quad
\bar{\mathcal{D}}f = (\nabla_{\oneb}\nabla^{1}f-iA_{\oneb}{}^{1}f)\theta^{\oneb}\otimes Z_{1}.
\end{equation}
By definition we have $\DJ= \mathcal{D}+\bar{\mathcal{D}}$. By \cref{eqn:DJad-formula} the operators $\mathcal{D}^*$ and $\bar{\mathcal{D}}^*$ are given locally with respect to any admissible coframe by
\begin{equation}\label{eqn:calD-adjoint-formula}
\mathcal{D}^*(F_1{}^{\oneb}\theta^1\otimes Z_{\oneb}) = \nabla^1\nabla_{\oneb}F_1{}^{\oneb} -iA^1{}_{\oneb}F_1{}^{\oneb},
\quad \text{and} \quad
\bar{\mathcal{D}}^*(F_{\oneb}{}^{1}\theta^1\otimes Z_{\oneb}) = \nabla^{\oneb}\nabla_{1}F_{\oneb}{}^{1} +iA^{\oneb}{}_{1}F_{\oneb}{}^{1}.
\end{equation}
By definition $\DJad = \mathcal{D}^*+\bar{\mathcal{D}}^*$, where $\mathcal{D}^*$ is extended to act by zero on $(T^{0,1})^*\otimes T^{1,0}\otimes \cE(-2,-2)$, and similarly $\bar{\mathcal{D}}^*$ is extended in the obvious way. In \cref{subsec:Linearized-Q11-at-sphere} we derived a fairly simple formula for $\RJ$ with respect to standard pseudohermitian structure $\theta$ on the CR $3$-sphere, extended to a standard admissible coframe $\{\theta,\theta^1,\theta^{\oneb}\}$. The simplification comes from the fact that the pseudohermitian curvature $R$ is then constant, and the pseudohermitian torsion vanishes. By \cref{eqn:RJ-formula-part1} and \cref{eqn:RJ-formula-part2}, with respect to the standard admissible coframe on $S^3$ we have
\begin{align}
\mathcal{R}^{\natural}(2i\varphi_1{}^{\oneb}\theta^1\otimes Z_{\oneb}) &= i\left (-\frac{1}{6}\varphi_{1}{}^{\oneb}{}_{,}{}^{11}{}_{11} - \varphi_{1}{}^{\oneb}{}_{,00} - \frac{2i}{3} \varphi_{1}{}^{\oneb}{}_{,0}{}^1{}_1 + \frac{i}{2}R\,\varphi_{1}{}^{\oneb}{}_{,0}\right)\,\theta^1\otimes Z_{\oneb}\,;
\label{eqn:calR-natural}\\
\mathcal{R}^+(2i\varphi_{\oneb}{}^{1}\theta^{\oneb}\otimes Z_1) &= -\frac{i}{6}\varphi_{\oneb\oneb,}{}^{\oneb\oneb}{}_{1}{}^{\oneb}\,\theta^1\otimes Z_{\oneb}.
\label{eqn:calR-plus}
\end{align}
The remaining operators are conjugates of these, $\bar{\mathcal{R}}^{\natural} = \overline{\mathcal{R}^{\natural}}$ and $\mathcal{R}^-=\overline{\mathcal{R}^+}$. By definition we have $R_J = \mathcal{R}^{\natural} + \bar{\mathcal{R}}^{\natural} + \mathcal{R}^+ + \mathcal{R}^-$. Since \cref{eqn:DeformationComplex2} is a complex we have
\begin{equation}\label{eqn:bigraded-complex-meaning}
\mathcal{R}^{\natural}\,\mathcal{D} + \mathcal{R}^+ \bar{\mathcal{D}} = 0
\quad \text{ and }\quad
\mathcal{D}^* \,\mathcal{R}^{\natural} + \bar{\mathcal{D}}^*\, \mathcal{R}^- = 0.
\end{equation}

Next we observe that the linearized operators governing stably embeddable deformations and the CR obstruction density arise naturally from this picture. Let $(M,H,J)$ be a CR $3$-manifold. We may define $\mathcal{D}$, $\mathcal{D}^*$ and their conjugates on $M$ by the same local formulae \cref{eqn:calD-formula} and \cref{eqn:calD-adjoint-formula} as we used for the CR $3$-sphere. Given a section $E$ of $\mathbb{C}\otimes \mathcal{D}ef(M) = (T^{1,0})^*\otimes T^{0,1} \oplus (T^{0,1})^*\otimes T^{1,0}$ we write $E^{(2,0)}$ for the $(T^{1,0})^*\otimes T^{0,1}$ part and $E^{(0,2)}$ for the $(T^{0,1})^*\otimes T^{1,0}$ part, and similarly for sections of $\mathbb{C}\otimes \mathcal{C}urv(M)$. Then $(\DJ f)^{(2,0)}=\mathcal{D}f$, and $(\DJ f)^{(0,2)}=\bar{\mathcal{D}}f$.
Now if $M$ is a strictly pseudoconvex hypersurface in $\mathbb{C}^2$, then it follows from \cref{lem:variational-vector} that $E$ is the infinitesimal deformation tensor of a contact parametrized deformation of $M$ in $\mathbb{C}^2$ if and only if $E^{(0,2)}=-2\bar{\mathcal{D}}f$ for some complex density $f$. The $-2$ is chosen to match with our previous conventions. Let $E=-2(\mathcal{D}\bar{f}+\bar{\mathcal{D}}f)$ be such an infinitesimal deformation tensor. If $f$ is real, then $E=-2(\mathcal{D}f+\bar{\mathcal{D}}f)$ and $E$ is the trivial infinitesimal deformation arising from the flow of the contact Hamiltonian vector field $V_f$. If $f=iv$ is imaginary then $E=2(\mathcal{D}f-\bar{\mathcal{D}}f)$, and any contact parametrized deformation $\psi_t$ of $M\subset\mathbb{C}^2$ inducing $E$ satisfies $\left.\frac{d}{dt}\right|_{t=0} \psi_t = J V_v$, where here $J$ is the standard complex structure on $\mathbb{C}^2$.

To see how the linearized obstruction density appears in \cref{eqn:bigraded-deformation-complex}, we first observe that for a general CR $3$-manifold the CR obstruction density \cref{eqn:obstruction-density} may be written as
\begin{equation*}
\mathcal{O}=\frac{1}{3}\mathcal{D}^* Q^{(2,0)}.
\end{equation*}
Let $\psi_t:S^3\to \mathbb{C}^2$, $t\in[0,\epsilon)$, be a contact parametrized deformation of the unit sphere $S^3\subset \mathbb{C}^2$, and let $(S^3,H,J_t)$ denote the corresponding family of induced CR structures on $S^3$. Let $\mathcal{O}_t$ denote the CR obstruction density of $(S^3,H,J_t)$, and let $\dot{\mathcal{O}}=\left.\frac{d}{dt}\right|_{t=0}\mathcal{O}_t$. Then, since $Q^{(2,0)}=0$ for the unit sphere, differentiating $\mathcal{O}_t=\frac{1}{3}\mathcal{D}_t^* Q_t^{(2,0)}$ at $t=0$ gives
\begin{equation}\label{eqn:dotO-for-sphere-deformations}
\dot{\mathcal{O}} = \frac{1}{3}\mathcal{D}^* \dot{Q}^{(2,0)},
\end{equation}
where $\dot{Q}^{(2,0)}=\left.\frac{d}{dt}\right|_{t=0}Q_t^{(2,0)}$.

\subsection{First order obstruction flatness and CR flatness of deformations}\label{subsec:first-order-obstruction-flatness}
We now apply the deformation complex to prove \cref{thm:FormalDeformations} for the case where $\mathcal{O}_t$ vanishes to second order in the deformation parameter $t$, see \cref{thm:obstruction-dot-zero-implies-Q-dot-zero} below. We first observe that, for deformations of the unit sphere, $\dot{\mathcal{O}}$ and $\dot{Q}$ are independent of the choice of contact parametrization of the deformation. Given a smooth family $M_t$, $t\in[0,\epsilon)$, of compact strictly pseudoconvex hypersurfaces in $\mathbb{C}^2$, we call a contact parametrized deformation $\psi_t:M_0\to \mathbb{C}^2$ with $\psi_t(M_0)=M_t$, for all $t$, a \emph{contact parametrization} of the family $M_t$.
\begin{lemma}\label{lem:contact-parametrization-independence}
Let $M_t\subset \mathbb{C}^2$, $t\in[0,\epsilon)$, be a smooth family of compact strictly pseudoconvex hypersurfaces with $M_0$ the unit sphere. Let $\psi_t:S^3\to \mathbb{C}^2$ be a contact parametrization of this family, and let $(S^3,H,J_t)$ denote the corresponding family of CR structures on $S^3$. Let $\dot{\mathcal{O}}=\left.\frac{d}{dt}\right|_{t=0}\mathcal{O}_t$ and $\dot{Q}=\left.\frac{d}{dt}\right|_{t=0}Q_t$. Then $\dot{Q}$ and $\dot{\mathcal{O}}$ are independent of the choice of contact parametrization $\psi_t$.
\end{lemma}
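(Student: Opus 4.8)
The plan is to reduce the lemma to the identity $\RJ\DJ=0$ in the deformation complex \cref{eqn:DeformationComplex2}. Suppose $\psi_t,\psi'_t\colon S^3\to\mathbb{C}^2$ are two contact parametrizations of the given family $M_t$, and let $(S^3,H,J_t)$ and $(S^3,H,J'_t)$ be the induced families of CR structures. Since $\psi_t$ and $\psi'_t$ are diffeomorphisms of $S^3$ onto the same hypersurface $M_t$, the map $\varphi_t:=\psi_t^{-1}\circ\psi'_t$ is a diffeomorphism of $S^3$ with $\varphi_0=\mathrm{id}$; because each $\psi_t$ and each $\psi'_t$ carries $H$ to the CR contact distribution of $M_t$, the family $\varphi_t$ consists of contact diffeomorphisms of $(S^3,H)$. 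From $\psi'_t=\psi_t\circ\varphi_t$ and the naturality of the induced CR structure one gets $J'_t=\varphi_t^{*}J_t$.

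Next I would differentiate $J'_t=\varphi_t^{*}J_t$ at $t=0$, exactly as in the proof of \cref{lem:contact-reparametrization-Jdot}, to obtain $\dot{J}'=\dot{J}+\mathcal{L}_V J$ with $V=\left.\tfrac{d}{dt}\right|_{t=0}\varphi_t$. Since the flow of $V$ preserves $H$, $V$ is an infinitesimal contact diffeomorphism, so $V=V_f$ for a (real) density $f$ and $\mathcal{L}_V J=-2\DJ f$; hence $\dot{J}'-\dot{J}=-2\DJ f\in\operatorname{im}\DJ$. Now $\dot{Q}$ is the linearization at $J$ of the Cartan curvature along the family, so it depends only on $J_0$ and $\dot{J}$; since $Q$ vanishes identically for the standard CR structure on $S^3$, this linearization is the operator $\RJ$ computed in \cref{subsec:Linearized-Q11-at-sphere}, i.e. $\dot{Q}=\RJ\dot{J}$ and $\dot{Q}'=\RJ\dot{J}'$. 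Because \cref{eqn:DeformationComplex2} is a complex we have $\RJ\DJ=0$, and therefore $\dot{Q}'=\RJ\dot{J}'=\RJ\dot{J}-2\RJ\DJ f=\RJ\dot{J}=\dot{Q}$. Finally, by \cref{eqn:dotO-for-sphere-deformations} we have $\dot{\mathcal{O}}=\tfrac{1}{3}\mathcal{D}^{*}\dot{Q}^{(2,0)}$, an expression depending only on $\dot{Q}$, so $\dot{\mathcal{O}}'=\dot{\mathcal{O}}$ as well.

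The only points requiring care are the bookkeeping ones: checking that $J'_t=\varphi_t^{*}J_t$ holds on the nose (so that the two infinitesimal deformation tensors really differ by $-2\DJ f$), and making precise that $\dot{Q}$ sees the family $J_t$ only through $\dot{J}$ so that the identity $\dot{Q}=\RJ\dot{J}$ from \cref{subsec:Linearized-Q11-at-sphere} applies. Once these are in place the lemma is immediate from $\RJ\DJ=0$ and the formula \cref{eqn:dotO-for-sphere-deformations}; I do not expect any genuine obstacle here.
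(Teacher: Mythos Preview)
Your proposal is correct and follows essentially the same approach as the paper: both reduce the claim to the identity $\RJ\DJ=0$ in the deformation complex, with $\dot{\mathcal{O}}$ then following from $\dot{Q}$ via \cref{eqn:dotO-for-sphere-deformations}. The paper's proof is a two-sentence version of yours; you have simply made explicit the construction of the contact reparametrization $\varphi_t=\psi_t^{-1}\circ\psi'_t$ and the differentiation step (as in \cref{lem:contact-reparametrization-Jdot}) that the paper leaves to the reader.
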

\begin{proof}
The lemma is equivalent to the statement that $\dot{Q}$ (and hence also $\dot{\mathcal{O}}=\frac{1}{3}\mathcal{D}^*\dot{Q}^{(2,0)}$) is unchanged under contact reparametrization of the abstract deformation $(S^3,H,J_t)$. But this was already observed in seeing that \cref{eqn:DeformationComplex2} is a complex, and is equivalent to $\RJ\DJ$ being zero.
\end{proof}
This lemma allows us to work with a convenient choice of contact parametrization. Let $\psi_t:S^3\to \mathbb{C}^2$, $t\in[0,\epsilon)$, be a contact parametrized deformation of the unit sphere such that $f=\btheta(\dot{\psi})$ imaginary, and let $(S^3,H,J_t)$ denote the corresponding family of CR structures on $S^3$. If we let $E = \left.\frac{d}{dt}\right|_{t=0} J_t$, then $E=2(\mathcal{D}f-\bar{\mathcal{D}}f)$. Taking the $(2,0)$ part of $\dot{Q}=\RJ E$ gives
\begin{equation}\label{eqn:Q-20-dot-for-f-imaginary}
\dot{Q}^{(2,0)} = 2(\mathcal{R}^{\natural}\mathcal{D}-\mathcal{R}^+\bar{\mathcal{D}})f = -4\mathcal{R}^+\bar{\mathcal{D}}f
\end{equation}
since $\mathcal{R}^{\natural}\,\mathcal{D}=-\mathcal{R}^+\bar{\mathcal{D}}$. Thus
\begin{equation}\label{eqn:O-dot-for-f-imaginary}
\dot{\mathcal{O}} = -\frac{4}{3}\mathcal{D}^*\,\mathcal{R}^+\bar{\mathcal{D}}f.
\end{equation}
These observations lead us to the following theorem, which implies \cref{thm:FormalDeformations} for the case where $\mathcal{O}_t$ vanishes to second order in the deformation parameter $t$.
\begin{theorem}\label{thm:obstruction-dot-zero-implies-Q-dot-zero}
Let $\psi_t:S^3\to \mathbb{C}^2$, $t\in[0,\epsilon)$, be a contact parametrized deformation of the unit sphere $S^3\subset \mathbb{C}^2$, and let $(S^3,H,J_t)$ denote the corresponding family of CR structures on $S^3$. If $\dot{\mathcal{O}}=0$, then $\dot{Q}=0$.
\end{theorem}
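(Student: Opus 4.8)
The plan is as follows. By \cref{lem:contact-parametrization-independence} and \cref{lem:making-psi-dot-imaginary} there is no loss of generality in assuming the variational potential $f=\btheta(\dot\psi)$ is purely imaginary; with this normalization $E=\dot J=2(\mathcal{D}f-\bar{\mathcal{D}}f)$, so \cref{eqn:Q-20-dot-for-f-imaginary} and \cref{eqn:O-dot-for-f-imaginary} give $\dot{Q}^{(2,0)}=-4\,\mathcal{R}^+\bar{\mathcal{D}}f$ and $\dot{\mathcal{O}}=-\tfrac43\,\mathcal{D}^*\mathcal{R}^+\bar{\mathcal{D}}f$. Since $Q$ vanishes identically on the unit sphere, $\dot{Q}$ is a genuine section of $\mathcal{C}urv(M)$ and hence real, so $\dot{Q}^{(0,2)}=\overline{\dot{Q}^{(2,0)}}$; it therefore suffices to show $\dot{Q}^{(2,0)}=0$, and the theorem is equivalent to the statement that $\mathcal{D}^*$ is injective on the image of $\mathcal{R}^+\bar{\mathcal{D}}$, regarded as an operator on complex CR densities of weight $(1,1)$ on $S^3$.

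To prove this injectivity I would pass to the homogeneous model $S^3=\mathrm{SU}(2,1)/P$, on which every operator occurring in the bigraded complex \cref{eqn:bigraded-deformation-complex} is $\mathrm{SU}(2,1)$-equivariant. Decomposing the relevant spaces of sections into isotypic components for a maximal compact subgroup of the CR automorphism group together with the Reeb action — each occurring with finite multiplicity — Schur's lemma shows that each of $\mathcal{D},\bar{\mathcal{D}},\mathcal{D}^*,\bar{\mathcal{D}}^*,\mathcal{R}^{\natural},\bar{\mathcal{R}}^{\natural},\mathcal{R}^+,\mathcal{R}^-$ acts by a scalar on each component, so that the claim reduces to a numerical one: on every component on which $\mathcal{R}^+\bar{\mathcal{D}}$ is nonzero, $\mathcal{D}^*$ is nonzero as well. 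I would use the exactness of \cref{eqn:DeformationComplex2} — it is the Bernstein--Gelfand--Gelfand resolution attached to the adjoint representation of $\mathrm{SU}(2,1)$, with cohomology $H^*(S^3,\mathbb{R})\otimes\mathfrak{su}(2,1)$, so in particular $\ker\RJ=\operatorname{im}\DJ$ and $\ker\DJad=\operatorname{im}\RJ$ — first to describe $\ker(\mathcal{R}^+\bar{\mathcal{D}})$: if $\mathcal{R}^+\bar{\mathcal{D}}f=0$ then, using the reality of $\dot{Q}=-4\,\RJ(\bar{\mathcal{D}}f)$, also $\RJ(\bar{\mathcal{D}}f)=0$, whence $\bar{\mathcal{D}}f\in\operatorname{im}\DJ$; since $\bar{\mathcal{D}}f$ is of pure type this forces $\mathcal{D}g=0$ and $\bar{\mathcal{D}}(f-g)=0$ for some density $g$, identifying $\ker(\mathcal{R}^+\bar{\mathcal{D}})$ with a finite-dimensional space built from the kernels of $\mathcal{D}$ and $\bar{\mathcal{D}}$. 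It then remains to check, via a direct computation with the explicit formulas \cref{eqn:calD-formula}, \cref{eqn:calD-adjoint-formula} and \cref{eqn:calR-plus} on the standard pseudohermitian sphere — on which $R$ is constant and the torsion vanishes — that on each further isotypic component of $\operatorname{im}(\mathcal{R}^+\bar{\mathcal{D}})$ the scalar by which $\mathcal{D}^*$ acts is nonzero.

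I expect the main obstacle to be exactly this last step: ruling out a ``spurious'' zero of $\mathcal{D}^*$ along $\operatorname{im}(\mathcal{R}^+\bar{\mathcal{D}})$, i.e., showing that no nonzero element of $\operatorname{im}(\mathcal{R}^+\bar{\mathcal{D}})$ lies in $\ker\mathcal{D}^*$. This is where the positivity of the Webster curvature of $S^3$ and the precise structure of \cref{eqn:DeformationComplex2} enter in an essential way, and it is also the point at which the embeddability hypothesis is indispensable, consistent with the failure of the analogue of \cref{thm:FormalDeformations} for general abstract deformations. An alternative to the isotypic bookkeeping would be an integration-by-parts argument: pair the relation $\mathcal{D}^*\mathcal{R}^+\bar{\mathcal{D}}f=0$ against a well-chosen density and use the formal self-adjointness of $\mathcal{R}^+$ and $\mathcal{R}^-$ (which follows from that of $\RJ$) together with the operator identities \cref{eqn:bigraded-complex-meaning} to try to produce a sum of squared $L^2$-norms; the difficulty there is that the density weights obstruct the naive pairings, so the Levi form must be inserted carefully, and one still needs the positivity of the sphere's curvature to conclude definiteness.
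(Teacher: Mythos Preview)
Your reduction to $f=\btheta(\dot\psi)$ imaginary and the formulas $\dot Q^{(2,0)}=-4\,\mathcal{R}^+\bar{\mathcal{D}}f$, $\dot{\mathcal{O}}=-\tfrac43\,\mathcal{D}^*\mathcal{R}^+\bar{\mathcal{D}}f$ are exactly the paper's starting point. From there, however, the paper takes the route you list only as an ``alternative'': a direct integration by parts, and it is much simpler than you anticipate.

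On the standard $S^3$ one has $h_{1\oneb}=1$ and $A_{11}=0$, so by \cref{eqn:calD-formula}, \cref{eqn:calR-plus}, \cref{eqn:calD-adjoint-formula} the operator $\mathcal{D}^*\mathcal{R}^+\bar{\mathcal{D}}$ is literally a string of eight $\nabla_1,\nabla_{\oneb}$ derivatives: $\mathcal{D}^*\mathcal{R}^+\bar{\mathcal{D}}f=0$ reads $f_{\oneb\oneb}{}^{\oneb\oneb}{}_{11}{}^{11}=0$, and the $(1,1)$-component of $\dot Q^{(2,0)}$ is $\tfrac13 f_{\oneb\oneb}{}^{\oneb\oneb}{}_{11}$. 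The paper pairs the conjugate of the vanishing expression with $f^{11}{}_{11}$, integrates by parts twice, and uses $h_{1\oneb}=1$ (so $\nabla^{\oneb}=\nabla_1$, $\nabla^1=\nabla_{\oneb}$) to recognize $f^{11}{}_{11}{}^{\oneb\oneb}=f_{\oneb\oneb}{}^{\oneb\oneb}{}_{11}$. This yields $\int_M |f_{\oneb\oneb}{}^{\oneb\oneb}{}_{11}|^2=0$ and hence $\dot Q^{(2,0)}=0$. There is no weight obstruction (raising and lowering with $h_{1\oneb}=1$ takes care of everything), no curvature positivity enters, and no self-adjointness of $\mathcal{R}^{\pm}$ is used (indeed $\mathcal{R}^+$ and $\mathcal{R}^-$ are conjugate, not self-adjoint).

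Your primary proposal---isotypic decomposition under the CR automorphism group and a case-by-case check that $\mathcal{D}^*$ never vanishes on $\operatorname{im}(\mathcal{R}^+\bar{\mathcal{D}})$---is a plausible strategy, but as you acknowledge, the decisive step is not carried out, so it remains a plan rather than a proof. Two smaller issues: your expression ``$\dot Q=-4\,\RJ(\bar{\mathcal{D}}f)$'' is not well formed, since $\RJ$ is the real operator on $\mathcal{D}ef(S^3)$ and $\bar{\mathcal{D}}f$ is of pure type $(0,2)$; for the same reason the argument you sketch for $\ker(\mathcal{R}^+\bar{\mathcal{D}})$ via exactness at $\mathcal{D}ef(S^3)$ does not go through as written. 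The paper's integration-by-parts argument bypasses all of this in three lines.
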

\begin{proof}
By \cref{lem:contact-parametrization-independence} and \cref{lem:making-psi-dot-imaginary} it is no loss of generality to assume $f=\btheta(\dot{\psi})$ is imaginary. Let $E = \left.\frac{d}{dt}\right|_{t=0} J_t$, so that $E=2(\mathcal{D}f-\bar{\mathcal{D}}f)$.
By \cref{eqn:O-dot-for-f-imaginary} we then have
\begin{equation}\label{eqn:zigzag-composition-zero-on-f}
\mathcal{D}^*\,\mathcal{R}^+\bar{\mathcal{D}}f =0.
\end{equation}
Let $(\theta,\theta^1,\theta^{\oneb})$ be the standard admissible coframe on $S^3$ (for which $A^1{}_{\oneb}=0$ and $R=2$). As usual we use $\theta$ to trivialize $\cE(1,1)$ and think of $f$ as a function. We define $\varphi_{\oneb}{}^1$ to be $-if^1{}_{\oneb}$ where $f^1{}_{\oneb}=\nabla_{\oneb}\nabla^1 f$, so that
\begin{equation*}
E = 2i\varphi_{1}{}^{\oneb}\theta^{1}\otimes Z_{\oneb} - 2i\varphi_{\oneb}{}^1\theta^{\oneb}\otimes Z_1.
\end{equation*}
Continuing to denote derivatives of $f$ by appending indices, by \cref{eqn:calR-plus} and \cref{eqn:Q-20-dot-for-f-imaginary} we have
\begin{equation}\label{eqn:Q-20-dot-for-proof}
\dot{Q}^{(2,0)} = -4\mathcal{R}^+ \bar{\mathcal{D}}f = -2\mathcal{R}^+(2i\varphi_{\oneb}{}^1\theta^{\oneb}\otimes Z_1 )
= \frac{1}{3} f_{\oneb\oneb}{}^{\oneb\oneb}{}_{1}{}^{\oneb} \theta^{1}\otimes Z_{\oneb}.
\end{equation}
Applying $\bar{\mathcal{D}}^*$ to the above, by \cref{eqn:zigzag-composition-zero-on-f} (or \cref{eqn:dotO-for-sphere-deformations})
\begin{equation*}
f_{\oneb\oneb}{}^{\oneb\oneb}{}_{11}{}^{11}=0.
\end{equation*}
Integrating by parts twice we have
\begin{equation*}
0=\int_M f^{11}{}_{11} \,\overline{f_{\oneb\oneb}{}^{\oneb\oneb}{}_{11}{}^{11}}
= \int_M f^{11}{}_{11}{}^{\oneb\oneb} \, \overline{f_{\oneb\oneb}{}^{\oneb\oneb}{}_{11}}
= \int_M |f_{\oneb\oneb}{}^{\oneb\oneb}{}_{11}|^2,
\end{equation*}
where the integrals are taken with respect to $\theta\wedge\dee\theta$. So $f_{\oneb\oneb}{}^{\oneb\oneb}{}_{11}=0$, and \cref{eqn:Q-20-dot-for-proof} gives $\dot{Q}^{(2,0)}=0$. Since $\dot{Q}$ is real, $\dot{Q}=2\mathrm{Re}\,\dot{Q}^{(2,0)} =0$, as required.
\end{proof}

\subsection{Higher order obstruction flatness and CR flatness of deformations}\label{subsec:higher-order-obstruction-flatness}
Our aim is now to prove the following theorem, which implies \cref{thm:FormalDeformations}.
\begin{theorem}\label{thm:FormalDeformations-v2}
Let $\psi_t:S^3\to \mathbb{C}^2$, $t\in[0,\epsilon)$, be a smooth contact parametrized deformation of the unit sphere $S^3\subset \mathbb{C}^2$, and let $(S^3,H,J_t)$ denote the corresponding family of CR structures on $S^3$. If $\mathcal{O}_t=O(t^k)$, then $Q_t=O(t^k)$.
\end{theorem}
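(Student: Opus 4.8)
The plan is to prove \cref{thm:FormalDeformations-v2} by induction on $k$, peeling off one Taylor coefficient of $Q_t$ at a time: at each order one normalizes the parametrized deformation so that it agrees with the sphere to that order, whereupon the computation reduces to the one underlying \cref{thm:obstruction-dot-zero-implies-Q-dot-zero}. (For $k=1$ there is nothing to prove, since $Q_0=0$ for the sphere.)

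I would first record two preliminary observations. The bookkeeping lemma: reparametrizing $\psi_t$ by a smooth family $\varphi_t$ of contact diffeomorphisms of $S^3$ with $\varphi_0=\mathrm{id}$ yields another contact parametrized deformation of the sphere and replaces $(Q_t,\mathcal{O}_t)$ by their $\varphi_t$-pullbacks; in particular it preserves the vanishing orders of $Q_t$ and of $\mathcal{O}_t$, and --- expanding the pullback by the Leibniz rule and using that $Q_t=O(t^j)$, $\mathcal{O}_t=O(t^k)$ are themselves preserved --- it preserves the leading Taylor coefficients of $Q_t$ and $\mathcal{O}_t$ that the argument tracks. The order-$j$ analogue of \cref{lem:variational-vector}: if $J_t=J+O(t^j)$, then the Taylor coefficients of the component functions of $\psi_t$ up through order $j-1$ are CR functions for the \emph{sphere}, so running the computation of \cref{lem:variational-vector} at order $t^j$ shows that the order-$j$ deformation tensor $E_{(j)}\in\mathcal{D}ef(S^3)$ satisfies $E_{(j)}^{(0,2)}=-2\bar{\mathcal{D}}f_j$ for a complex density $f_j$, hence $E_{(j)}=-2(\mathcal{D}\bar f_j+\bar{\mathcal{D}}f_j)$ since $E_{(j)}$ is real.

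Now fix $k$ and assume $\mathcal{O}_t=O(t^k)$, so the first $k-1$ Taylor coefficients of $\mathcal{O}_t$ vanish. For $j=1,\dots,k-1$ I would show, inductively, that after reparametrizing $\psi_t$ one has $J_t=J+O(t^j)$ with the first $j-1$ Taylor coefficients of $Q_t$ vanishing, and then that the $j$-th vanishes too. Assuming $J_t=J+O(t^j)$: since $Q$ vanishes on the sphere with linearization $\RJ$, the chain rule gives that the $j$-th Taylor coefficient of $Q_t$ equals $\RJ E_{(j)}$, and since the obstruction density is $\tfrac13\mathcal{D}^*$ applied to $Q^{(2,0)}$ on any CR $3$-manifold while $Q_t^{(2,0)}=O(t^j)$, the $j$-th Taylor coefficient of $\mathcal{O}_t$ equals $\tfrac13\mathcal{D}^*(\RJ E_{(j)})^{(2,0)}$ with $\mathcal{D}^*$ now the operator of the sphere. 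Plugging in $E_{(j)}=-2(\mathcal{D}\bar f_j+\bar{\mathcal{D}}f_j)$ and using the identity $\mathcal{R}^{\natural}\mathcal{D}=-\mathcal{R}^+\bar{\mathcal{D}}$ from \cref{eqn:bigraded-complex-meaning}, one finds $(\RJ E_{(j)})^{(2,0)}=2\mathcal{R}^+\bar{\mathcal{D}}(\bar f_j-f_j)$, which depends only on $\mathrm{Im}\,f_j$ and, on $S^3$, is exactly the expression $-4\mathcal{R}^+\bar{\mathcal{D}}f$ occurring in the proof of \cref{thm:obstruction-dot-zero-implies-Q-dot-zero} with $f$ there replaced by the imaginary density $i\,\mathrm{Im}\,f_j$. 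Hence the vanishing of $\tfrac13\mathcal{D}^*(\RJ E_{(j)})^{(2,0)}$ forces, by the integration-by-parts argument of that proof (legitimate since $S^3$ is compact and $\mathrm{Im}\,f_j$ is real), the relevant fourth covariant derivative of $\mathrm{Im}\,f_j$ to vanish, so $(\RJ E_{(j)})^{(2,0)}=0$; as $Q_t$ is real, its $j$-th Taylor coefficient vanishes. It follows that $\RJ E_{(j)}=0$, so by middle exactness \cref{eqn:exactness-in-deformation-complex} $E_{(j)}=\DJ g_j$ for a real density $g_j$; reparametrizing by the time-$t^j$ flow of the contact Hamiltonian vector field with potential $g_j/2$ changes $J_t$ only at order $t^j$, replacing the order-$j$ term by $E_{(j)}+\mathcal{L}_{V_{g_j/2}}J=E_{(j)}-\DJ g_j=0$, so $J_t=J+O(t^{j+1})$. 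Iterating from $j=1$ to $k-1$ gives $Q_t=O(t^k)$, which is \cref{thm:FormalDeformations-v2} and hence \cref{thm:FormalDeformations}.

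The main obstacle is the bookkeeping: verifying that the chain of order-by-order reparametrizations, each of which only straightens a single Taylor coefficient of $J_t$, preserves the particular leading Taylor coefficients of $Q_t$ and $\mathcal{O}_t$ used in the argument. This goes through because the hypotheses $Q_t=O(t^j)$ (valid once $J_t=J+O(t^j)$) and $\mathcal{O}_t=O(t^k)$, together with $Q_0=\mathcal{O}_0=0$ for the sphere, are manifestly invariant and annihilate every cross-term in the Leibniz expansions of the relevant pullbacks. The other delicate point is the order-$j$ extension of \cref{lem:variational-vector}; but once the deformation agrees with the sphere to order $t^j$, the order-$t^j$ computation is word-for-word the first-order one carried out in \cref{subsec:first-order-obstruction-flatness}, so the argument of \cref{thm:obstruction-dot-zero-implies-Q-dot-zero} applies verbatim.
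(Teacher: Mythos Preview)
Your argument is correct and follows the same inductive scheme as the paper: peel off one order of $Q_t$ at a time by normalizing the deformation and then invoking the computation underlying \cref{thm:obstruction-dot-zero-implies-Q-dot-zero}. The difference lies in the normalization step. The paper, at each stage, applies three normalizations---a contact reparametrization killing the leading deformation tensor (\cref{lem:deformation-trivial-at-order-k}), a composition with ambient CR embeddings to force $\psi_t=\psi_0+O(t^{\ell+1})$ (\cref{lem:ambient-CR-normalization}), and a further contact reparametrization making $\btheta(\psi^{(\ell+1)})$ imaginary (\cref{lem:making-psi-k-imaginary})---and then invokes \cref{lem:formulae-for-deformations-trivial-to-higher-order}, whose hypothesis is the strong condition $\psi_t=\psi_0+O(t^\ell)$. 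You carry out only the first of these. Your streamlining rests on two observations: first, that the order-$j$ analogue of \cref{lem:variational-vector} already goes through under the weaker hypothesis $J_t=J+O(t^j)$ (because $\varphi_{\bar 1}{}^1(t)=O(t^j)$ kills every cross term in the Leibniz expansion of $Z^t_{\bar 1}\psi_t^k=0$ at order $j$, regardless of the lower-order Taylor coefficients of $\psi_t$), so there is no need to straighten $\psi_t$ via ambient embeddings; and second, that $(\RJ E_{(j)})^{(2,0)}=2\mathcal R^+\bar{\mathcal D}(\bar f_j-f_j)$ depends only on $\mathrm{Im}\,f_j$, so the imaginarity normalization is unnecessary. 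Both observations are sound, and what you gain is a shorter list of auxiliary lemmas; what the paper's approach buys is a cleaner separation of the geometric normalizations (each packaged as a standalone lemma) from the analytic core.
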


The proof of this theorem requires several results, which straightforwardly generalize the above results for the case of first order deformations. First of all we show that the conditions $\mathcal{O}_t=O(t^k)$ and $Q_t=O(t^k)$ are geometric conditions on the deformation, that is these conditions only depend on the family of strictly pseudoconvex hypersurfaces $M_t\subset \mathbb{C}^2$ (with $M_0$ the unit sphere), and not on the contact parametrization of $M_t$.
\begin{lemma}\label{lem:geometric-invariance-of-higher-order-vanishing}
Let $\psi_t:S^3\to \mathbb{C}^2$, $t\in[0,\epsilon)$, be a smooth contact parametrized deformation of the unit sphere $S^3\subset \mathbb{C}^2$, and let $(S^3,H,J_t)$ denote the corresponding family of CR structures on $S^3$. Suppose $Q_t=O(t^{\ell})$ and $\mathcal{O}_t=O(t^k)$. If $\psi'_t$ is any contact reparametrization of $\psi_t$, inducing the family of CR structures $(S^3,H,J'_t)$ on $S^3$, then $Q'_t =O(t^{\ell})$ and $\mathcal{O}'_t=O(t^k)$.
\end{lemma}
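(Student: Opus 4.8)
The plan is to deduce the invariance of the vanishing orders from the naturality (CR invariance) of the Cartan umbilical tensor $Q$ and the obstruction density $\mathcal{O}$, combined with an elementary fact about families of sections vanishing to finite order at $t=0$. First recall that, by definition, a contact reparametrization $\psi'_t$ of $\psi_t$ comes equipped with a smooth family $\varphi_t$, $t\in[0,\epsilon)$, of contact diffeomorphisms of $(S^3,H)$ satisfying $\varphi_0=\mathrm{id}$ and $\varphi_t^*J_t=J'_t$; in particular $\varphi_t\colon(S^3,H,J'_t)\to(S^3,H,J_t)$ is a CR diffeomorphism for every $t$. Since $Q$ is a natural section of the ($J$-independent) bundle $\mathrm{End}(H)\otimes(TS^3/H)^{-2}$ and $\mathcal{O}$ is a natural section of $\cE_{\mathbb{R}}(-3,-3)=(TS^3/H)^{-3}$, functoriality of their construction (via the CR Cartan connection recalled in \cref{sec:TractorCalculus}) gives $Q'_t=\varphi_t^*Q_t$ and $\mathcal{O}'_t=\varphi_t^*\mathcal{O}_t$, where $\varphi_t^*$ denotes the natural pullback on sections of the respective fixed bundles (using that $\varphi_t$ preserves $H$, hence acts on $\mathrm{End}(H)$ and on $TS^3/H$).

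With these identities in hand, the statement reduces to the following soft observation, which I would isolate as an auxiliary step: if $E\to S^3$ is a fixed vector bundle, $F_t\in\Gamma(E)$, $t\in[0,\epsilon)$, is a smooth family with $F_t=O(t^m)$, and $\Phi_t$ is a smooth family of vector bundle automorphisms of $E$ with $\Phi_0=\mathrm{id}$ covering diffeomorphisms $\varphi_t$ of $S^3$, then $\Phi_t(F_t)=O(t^m)$. The argument is routine: smoothness of $(t,x)\mapsto F_t(x)$ together with $\partial_t^jF_t|_{t=0}=0$ for $0\le j<m$ allows one to write $F_t=t^m\widetilde F_t$ with $(t,x)\mapsto\widetilde F_t(x)$ smooth on $[0,\epsilon)\times S^3$ (Taylor's theorem with integral remainder, applied with the spatial point as a parameter); then $\Phi_t(F_t)=t^m\,\Phi_t(\widetilde F_t)$, and $(t,x)\mapsto\Phi_t(\widetilde F_t)(x)$ is again smooth, being a composition of the smooth maps $(t,x)\mapsto(t,\varphi_t(x))$, $(t,y)\mapsto\widetilde F_t(y)$ and the fibrewise-linear $\Phi_t$, hence bounded on $[0,\epsilon/2]\times S^3$ by compactness of $S^3$, so $\Phi_t(F_t)=O(t^m)$.

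Applying this twice --- once with $E=\mathrm{End}(H)\otimes(TS^3/H)^{-2}$, $F_t=Q_t$, $m=\ell$, and once with $E=(TS^3/H)^{-3}$, $F_t=\mathcal{O}_t$, $m=k$, taking $\Phi_t=\varphi_t^*$ in both cases --- yields $Q'_t=O(t^\ell)$ and $\mathcal{O}'_t=O(t^k)$, which is the claim. The step I expect to require the most care is the naturality identities $Q'_t=\varphi_t^*Q_t$ and $\mathcal{O}'_t=\varphi_t^*\mathcal{O}_t$: these are standard for invariants built from the Cartan connection, but one must bookkeep the density weights correctly. As a fallback one can verify them directly from the local formulae \cref{eqn:Cartan-umbilical-tensor} and \cref{eqn:obstruction-density} by fixing a contact form, pulling it back by $\varphi_t$, and tracking how $h_{1\oneb}$, $\omega_1{}^1$, $A^1{}_{\oneb}$, $R$ and their covariant derivatives transform. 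Everything else is soft and uses nothing about $S^3$ beyond compactness.
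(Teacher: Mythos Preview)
Your proof is correct and follows essentially the same approach as the paper: use naturality of $Q$ and $\mathcal{O}$ under CR diffeomorphisms to get $Q'_t=\varphi_t^*Q_t$ and $\mathcal{O}'_t=\varphi_t^*\mathcal{O}_t$, then observe that pulling back by a smooth family of diffeomorphisms preserves the order of vanishing in $t$. The paper does the second step in one line by writing out the Taylor expansion of $Q_t$ explicitly, whereas you isolate it as a general auxiliary lemma via Taylor's theorem with remainder; the content is the same.
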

\begin{proof}
Let $\psi'_t$ be given by $\psi_t\circ \varphi_t$, where $\varphi_t$ is a smooth family of contact diffeomorphisms of $(S^3,H)$. Then $J'_t = \varphi_t^* J_t$, so by the naturality of the Cartan umbilical tensor we have $Q'_t = \varphi_t^* Q_t$. Since $Q_t=\frac{1}{\ell !}t^{\ell}Q^{(\ell)}+\frac{1}{(\ell+1)!}t^{\ell+1}Q^{(\ell+1)}+O(t^{\ell+2})$ we have
\begin{equation*}
Q'_t = \varphi_t^* \left(\frac{1}{\ell !}t^{\ell}Q^{(\ell)} + O(t^{\ell+1})\right) = \frac{1}{\ell !}t^{\ell}\varphi_t^*Q^{(\ell)} + O(t^{\ell+1}) = O(t^{\ell}),
\end{equation*}
as required. The argument for the CR obstruction density is the same.
\end{proof}
We shall also need the fact that these two conditions are biholomorphically invariant, that is, invariant under composition with a smooth family of ambient biholomorphisms.
\begin{lemma}\label{lem:biholomorphic-invariance-of-order-of-vanishing}
Let $\psi_t:S^3\to \mathbb{C}^2$, $t\in[0,\epsilon)$, be a smooth contact parametrized deformation of the unit sphere $S^3\subset \mathbb{C}^2$, and let $(S^3,H,J_t)$ denote the corresponding family of CR structures on $S^3$. Suppose $Q_t=O(t^{\ell})$ and $\mathcal{O}_t=O(t^k)$. Let $U_t$ be a neighborhood of $\psi_t(S^3)\subset \mathbb{C}^2$ and $\Phi_t:U_t\to\mathbb{C}^2$, $t\in[0,\epsilon)$, a smooth family of biholomorphisms with $\Phi_0=\mathrm{id}$. If $\psi'_t=\Phi_t\circ\psi_t$ induces the family of CR structures $(S^3,H,J'_t)$ on $S^3$, then $Q'_t=O(t^{\ell})$ and $\mathcal{O}'_t=O(t^k)$.
\end{lemma}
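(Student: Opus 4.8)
The plan is to show that composing a contact parametrized deformation with a family of ambient biholomorphisms does not change the induced abstract family of CR structures $(S^3,H,J_t)$ at all, so that the conclusion is an immediate consequence of the CR invariance of $Q$ and $\mathcal{O}$.

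First I would verify that $\psi'_t=\Phi_t\circ\psi_t$ is again a contact parametrized deformation of $S^3$. We have $\psi'_0=\Phi_0\circ\psi_0=\mathrm{id}$, and $M'_t:=\psi'_t(S^3)=\Phi_t(\psi_t(S^3))$ is strictly pseudoconvex because it is the biholomorphic image of the strictly pseudoconvex hypersurface $M_t:=\psi_t(S^3)$. Since each $\Phi_t$ is holomorphic, its differential commutes with the ambient complex structure $J$ on $\mathbb{C}^2$; hence $\Phi_t$ maps the complex tangent bundle $H_{M_t}=TM_t\cap J(TM_t)$ onto $H_{M'_t}=TM'_t\cap J(TM'_t)$ and intertwines the induced partial complex structures, i.e.\ $\Phi_t\colon M_t\to M'_t$ is a CR isomorphism. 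Therefore $\psi'_t$ is a composition of contact diffeomorphisms $(S^3,H)\to (M_t,H_{M_t})\to (M'_t,H_{M'_t})$ and is in particular contact parametrized.

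Next I would record that the CR structure $(S^3,H,J'_t)$ induced by $\psi'_t$ is literally equal to $(S^3,H,J_t)$: chasing pullbacks, $J'_t=(\psi_t)^{-1}_*\circ (\Phi_t)^{-1}_*\circ J\circ (\Phi_t)_*\circ (\psi_t)_*=(\psi_t)^{-1}_*\circ J\circ (\psi_t)_*=J_t$ on $H$, where the middle equality again uses that $d\Phi_t$ commutes with $J$. Since $Q_t$ and $\mathcal{O}_t$ are CR invariants of the abstract structure $(S^3,H,J_t)$, it follows that $Q'_t=Q_t$ and $\mathcal{O}'_t=\mathcal{O}_t$ for every $t$, and in particular $Q'_t=O(t^{\ell})$ and $\mathcal{O}'_t=O(t^{k})$.

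There is no real obstacle in this argument; the single point that deserves care is the observation that a biholomorphism restricts to a CR isomorphism of the hypersurfaces it maps between, which is exactly what forces $\psi'_t$ to remain contact parametrized with contact distribution $H$ unchanged. (Should one instead prefer to allow a Gray-type reparametrization of $\psi'_t$ to a contact parametrization, the argument would produce a contact reparametrization of the abstract deformation $(S^3,H,J_t)$ and one could finish by invoking \cref{lem:geometric-invariance-of-higher-order-vanishing}; as shown above, though, no such reparametrization is actually needed.)
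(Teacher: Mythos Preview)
Your argument is correct and, in fact, slightly more direct than the paper's. The paper's one-line proof invokes the transformation law for $Q$ and $\mathcal{O}$ as \emph{weighted biholomorphic invariants}, recording the explicit formulas $Q'_t=|\det\Phi_t|^{-4/3}Q_t$ and $\mathcal{O}'_t=|\det\Phi_t|^{-2}\mathcal{O}_t$ (with citations to Fefferman and Graham), and then observes that the smooth nonvanishing factors do not affect the order of vanishing in $t$. You instead note that because $\Phi_t$ restricts to a CR isomorphism $M_t\to M'_t$, the pulled-back abstract structures satisfy $J'_t=J_t$ on $(S^3,H)$, so the weighted invariants $Q'_t$ and $\mathcal{O}'_t$ coincide with $Q_t$ and $\mathcal{O}_t$ as sections of the intrinsic density bundles. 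The two statements are compatible: the factors of $|\det\Phi_t|$ in the paper's formulas reflect the change in the embedding-induced trivialization of the density bundles, which your approach never introduces. Your route is more self-contained (no external transformation law is needed), while the paper's route makes the dependence on the embedding explicit; either suffices for the lemma.
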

\begin{proof}
The result is immediate from the fact that the Cartan umbilical tensor and CR obstruction density are weighted biholomorphic invariants (cf.\ \cite{Fefferman1979, Graham1987b}), so that we have $Q'_t = |\det \Phi_t|^{-4/3}Q_t$ and $\mathcal{O}'_t = |\det \Phi_t|^{-2}\mathcal{O}_t$.
\end{proof}

\begin{remark}\label{rem:geometric-invariance}
Clearly, the conclusion of \cref{lem:biholomorphic-invariance-of-order-of-vanishing} holds even if the open subsets $U_t$ only contain $M_t:=\psi_t(S^3)$ in their closures and the biholomorphisms $\Phi_t$ extend smoothly to $M_t$. By Hartog's theorem the same is true if $\Phi_t$ is replaced by a smooth family of CR embeddings $\phi_t:M_t\to\mathbb C^2$.
\end{remark}

In proving \cref{thm:FormalDeformations-v2} our strategy will be to repeatedly normalize the contact parametrized deformation by contact reparametrizations and by ambient biholomorphisms (or CR embeddings), allowing us to argue as in the proof of \cref{thm:obstruction-dot-zero-implies-Q-dot-zero}. For the normalization procedure we need several lemmas. The first is the following straightforward generalization of \cref{lem:contact-reparametrization-Jdot}.
\begin{lemma}\label{lem:deformation-trivial-at-order-k}
Let $(M,H,J_t)$, $t\in[0,\epsilon)$, be a smooth family of CR structures on $M$. If
\begin{equation*}
J_t = J_0 + \frac{1}{k!}t^k E + O(t^{k+1})
\end{equation*}
where $E$ is a trivial infinitesimal deformation tensor, then there exists a smooth family $\varphi_t$, $t\in[0,\epsilon)$, of contact diffeomorphisms of $(M,H)$ such that
\begin{equation*}
\varphi_t^*J_t = J_0 + O(t^{k+1}).
\end{equation*}
\end{lemma}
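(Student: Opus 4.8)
The plan is to generalize the flow argument used for \cref{lem:contact-reparametrization-Jdot} (which is exactly the case $k=1$): we absorb the offending order-$k$ term of $J_t$ into a pullback by the flow of a suitable contact vector field, reparametrized in the deformation parameter so that the correcting diffeomorphisms depart from the identity only at order $t^k$. Since $E$ is trivial it lies in the image of $\DJ$, and because $\DJ$ is linear and $\DJ h=-\tfrac12\mathcal{L}_{V_h}J_0$, we may choose a contact Hamiltonian vector field $W$ on $(M,H)$ with $\mathcal{L}_W J_0=-E$ (take $W=V_h$ with $\DJ h=\tfrac12 E$, which exists as $\tfrac12 E$ is again in the image of the linear operator $\DJ$). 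Since $M$ is compact and $W$ is an infinitesimal contact diffeomorphism, its flow $\Phi_s$ is a globally defined one-parameter group of contact diffeomorphisms of $(M,H)$. I would then set $\varphi_t:=\Phi_{t^k/k!}$, a smooth family of contact diffeomorphisms of $(M,H)$ with $\varphi_0=\mathrm{id}$.

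It then remains to carry out the Taylor bookkeeping verifying $\varphi_t^*J_t=J_0+O(t^{k+1})$. Writing $J_t=J_0+\tfrac1{k!}t^k E+R_t$ with $R_t=O(t^{k+1})$ and using linearity of pullback,
\begin{equation*}
\varphi_t^*J_t=\Phi_{t^k/k!}^*J_0\;+\;\tfrac1{k!}t^k\,\Phi_{t^k/k!}^*E\;+\;\Phi_{t^k/k!}^*R_t.
\end{equation*}
For the first term, $\sigma\mapsto\Phi_\sigma^*J_0$ is smooth with value $J_0$ and $\sigma$-derivative $\mathcal{L}_W J_0=-E$ at $\sigma=0$, so $\Phi_\sigma^*J_0=J_0-\sigma E+O(\sigma^2)$ and hence $\Phi_{t^k/k!}^*J_0=J_0-\tfrac1{k!}t^k E+O(t^{2k})$. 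For the second term, $\Phi_{t^k/k!}^*E=E+O(t^k)$, so it equals $\tfrac1{k!}t^k E+O(t^{2k})$, and the two copies of $\tfrac1{k!}t^k E$ cancel. For the third term, pullback by a smooth $t$-family of diffeomorphisms equal to the identity at $t=0$ preserves the property of being $O(t^{k+1})$ (the first $k$ $t$-derivatives of $R_t$ vanish at $t=0$, hence so do those of $\Phi_{t^k/k!}^*R_t$, by the Leibniz rule). Since $2k\ge k+1$ for every $k\ge1$, summing the three contributions gives $\varphi_t^*J_t=J_0+O(t^{k+1})$, as required; for $k=1$ this recovers \cref{lem:contact-reparametrization-Jdot}.

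I do not anticipate a genuine obstacle: this is the standard ``normalize the lowest nonvanishing order'' maneuver. The only points requiring care are that a contact vector field with $\mathcal{L}_W J_0=-E$ exists, which is immediate from the triviality of $E$ together with the linearity of $\DJ$, and that the reparametrization $s=t^k/k!$ must be chosen precisely so that the flow correction matches the unwanted term to leading order while its quadratic remainder $O(t^{2k})$ is swallowed by the target error $O(t^{k+1})$ — this is where compactness of $M$ (for global existence of $\Phi_s$) and the inequality $2k\ge k+1$ enter.
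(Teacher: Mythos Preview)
Your argument is correct and is precisely the ``straightforward generalization'' of \cref{lem:contact-reparametrization-Jdot} that the paper invokes without further detail: replace the time-$t$ flow of the contact vector field by the time-$t^k/k!$ flow so that the Lie-derivative correction first appears at order $t^k$, and then carry out the Taylor bookkeeping you describe. The only thing the paper leaves implicit that you have made explicit is the verification that the three pulled-back pieces combine to $J_0+O(t^{k+1})$; your treatment of each piece (in particular writing $R_t=t^{k+1}\tilde R_t$ via Taylor's formula and noting that pullback by a smooth family of diffeomorphisms preserves this factorization) is sound.
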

We will also need the following lemma.
\begin{lemma}\label{lem:ambient-CR-normalization}
Let $\psi_t:M\to \mathbb{C}^2$, $t\in[0,\epsilon)$, be a contact parametrized deformation of the compact strictly pseudoconvex hypersurface $M\subset \mathbb{C}^2$ with $\psi_t = \psi_0 + O(t^k)$, and let $(M,H,J_t)$ be the corresponding family of CR structures on $M$. Suppose $J_t = J + O(t^{k+1})$. Then there exists a smooth family of CR embeddings $\phi_t:\psi_t(M)\to\mathbb{C}^2$, $t\in[0,\epsilon)$, with $\phi_0=\mathrm{id}$ such that $\phi_t\circ \psi_t = \psi_0 + O(t^{k+1})$.
\end{lemma}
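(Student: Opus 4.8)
The plan is to read off from the hypothesis $J_t=J+O(t^{k+1})$ that the leading embedded variation of $\psi_t$ is given by a pair of CR functions on $M$, and then to cancel that variation, to order $k+1$, by an ambient holomorphic ``wiggle'' of the domain bounded by $M$, corrected slightly so that it restricts to genuine CR embeddings of the perturbed hypersurfaces. We do not need to first normalize $\psi_t$ as in \cref{lem:making-psi-dot-imaginary}.

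First I would unwind the hypothesis. Write $\psi_t^j=z^j+\tfrac{t^k}{k!}V^j+O(t^{k+1})$, $j=1,2$, where $z^1,z^2$ are the coordinates on $\mathbb{C}^2$ and $V^j=\tfrac{d^k}{dt^k}\big|_{0}\psi_t^j$. Each $\psi_t^j$ is a CR function on $(M,H,J_t)$, i.e. $Z_{\bar1}^t\psi_t^j=0$ with $Z_{\bar1}^t=Z_{\bar1}+\varphi_{\bar1}{}^1(t)Z_1$ a frame for ${}^tT^{0,1}$ (notation as in the proof of \cref{lem:variational-vector}); the assumption $J_t=J+O(t^{k+1})$ says precisely that $\varphi_{\bar1}{}^1(t)=O(t^{k+1})$, hence $Z_{\bar1}\psi_t^j=O(t^{k+1})$. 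Since $Z_{\bar1}z^j=0$, matching the $t^k$-coefficient forces $Z_{\bar1}V^j=0$: thus $V^1,V^2$ are CR functions on $M=\partial\Omega$, the boundary of the bounded strictly pseudoconvex (hence Stein) domain $\Omega\subset\mathbb{C}^2$.

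Next I would build the straightening. Let $\Omega_t$ be the domain with $\partial\Omega_t=M_t:=\psi_t(M)$, and transport $V^j$ to $M_t$: the function $(\psi_t^{-1})^*V^j$ on $M_t$ satisfies $\bar\partial_b\big((\psi_t^{-1})^*V^j\big)=O(t^{k+1})$, because $\psi_t^{-1}\colon(M_t,\mathrm{std})\to(M,H,J_t)$ is CR, $J_t=J+O(t^{k+1})$, and $V^j$ is CR for $J$. Using the closed range and subelliptic estimates for $\bar\partial_b$ on the compact strictly pseudoconvex boundary $M_t=\partial\Omega_t$, its $L^2$-nearest CR function $b^j_t$ exists, depends smoothly on $t$, and satisfies $\psi_t^*b^j_t=V^j+O(t^{k+1})$. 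Let $\hat b^j_t\in\mathcal{O}(\Omega_t)\cap C^\infty(\overline{\Omega_t})$ be its holomorphic extension (Bochner--Hartogs together with boundary regularity), put $W_t=\hat b^1_t\,\partial_{z^1}+\hat b^2_t\,\partial_{z^2}$, and set $\Xi_t:=\mathrm{id}-\tfrac{t^k}{k!}W_t$. For small $t$ this is a biholomorphism of $\Omega_t$ onto its image, smooth up to $M_t$, so $\phi_t:=\Xi_t|_{M_t}$ is a CR embedding of $M_t$ into $\mathbb{C}^2$ with $\phi_0=\mathrm{id}$, depending smoothly on $t$; and for $x\in M$,
\begin{align*}
\phi_t(\psi_t(x)) &= \psi_t(x)-\tfrac{t^k}{k!}\big(b^1_t,b^2_t\big)(\psi_t(x))\\
&= \psi_t(x)-\tfrac{t^k}{k!}\big(V^1(x),V^2(x)\big)+O(t^{k+1}) = x+O(t^{k+1}),
\end{align*}
which is the assertion $\phi_t\circ\psi_t=\psi_0+O(t^{k+1})$.

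The main obstacle is exactly the step producing an \emph{honest} CR embedding rather than a merely approximately CR map: the naive wiggle $\mathrm{id}-\tfrac{t^k}{k!}\big(\text{holomorphic extension of }V\big)$ is holomorphic only on $\Omega$ (respectively $\Omega_t$) and not on a neighborhood of $M$, so it cannot simply be flowed, or inverted on $M_t$, to land back on $M$; getting a genuine family of CR embeddings forces one through the $\bar\partial_b$-solvability input above, with its uniformity in $t$ and smooth dependence on the parameter. By contrast the identification in the second paragraph — that the leading embedded variation consists of boundary values of holomorphic functions — is elementary, and is the real content of the lemma.
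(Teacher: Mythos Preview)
The paper states this lemma without proof (it is one of a sequence of auxiliary lemmas in \S6.5 whose proofs are left to the reader), so there is no argument to compare against directly. Your proof is correct and is the natural one.

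The first step---reading off from $Z_{\bar 1}^t\psi_t^j=0$ with $\varphi_{\bar 1}{}^1(t)=O(t^{k+1})$ that the components $V^j$ of the $k$th-order variation are CR on $M$---is exactly the computation the lemma is built on, and is parallel to the proof of \cref{lem:variational-vector}. Your construction of $\phi_t$ then correctly addresses the only real difficulty, which you identify clearly: the holomorphic extension of $V^j$ lives on $\overline\Omega$, not on a neighborhood of $M_t$, so one must first transport to $M_t$, project onto genuine CR functions there via the Szeg\H{o} projector, and only then extend holomorphically to $\Omega_t$. The analytic ingredients you invoke (closed range of $\bar\partial_b$ on embeddable strictly pseudoconvex $3$-manifolds, subelliptic estimates for the $C^\infty$ bound $\psi_t^*b^j_t=V^j+O(t^{k+1})$, smooth dependence of the Szeg\H{o} projector on $t$, and Bochner--Hartogs extension with boundary regularity) are all standard in this setting and are consistent with the paper's appeal to Hartogs in \cref{rem:geometric-invariance}.

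One small notational quibble: writing $\Xi_t=\mathrm{id}-\tfrac{t^k}{k!}W_t$ with $W_t$ a vector field is slightly abusive; you mean the holomorphic map $z\mapsto z-\tfrac{t^k}{k!}(\hat b^1_t(z),\hat b^2_t(z))$ on $\Omega_t$, and your subsequent computation uses it in exactly this sense.
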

To state the next lemma we need a definition. Let $\psi_t:M\to \mathbb{C}^2$, $t\in[0,\epsilon)$, be a contact parametrized deformation of the strictly pseudoconvex hypersurface $M\subset \mathbb{C}^2$, and let $(M,H,J_t)$ denote the corresponding family of CR structures on $M$. If $\psi_t = \psi_0 + \frac{1}{k!}t^k\xi + O(t^{k+1})$ then we may think of $\xi=\left.\frac{d^k}{dt^k}\right|_{t=0}\psi_t$ as a section of $T\mathbb{C}^2|_M$, and we define $\psi^{(k)}$ to be the corresponding section of $T_{(1,0)}$. When $k=1$ we have $\psi^{(1)}=\dot{\psi}$. The following lemma is a straightforward generalization of \cref{lem:making-psi-dot-imaginary}.
\begin{lemma}\label{lem:making-psi-k-imaginary}
Let $\psi_t:M\to \mathbb{C}^2$, $t\in[0,\epsilon)$, be a contact parametrized deformation of the compact strictly pseudoconvex hypersurface $M\subset \mathbb{C}^2$ with $\psi_t = \psi_0 + O(t^k)$. Then there is a smooth family $\varphi_t$, $t\in[0,\epsilon)$, of contact diffeomorphisms of $M$ which reparametrizes $\psi_t$ to $\psi'_t = \psi_t\circ \varphi_t$ with $\btheta(\psi'^{(k)})$ imaginary.
\end{lemma}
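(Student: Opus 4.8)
The plan is to transcribe the proof of \cref{lem:making-psi-dot-imaginary}, replacing the first-order variation by the $k$-th order one. First I would fix any pseudohermitian structure $\theta$ for the initial CR structure $(M,H,J)$ and set $f:=\btheta(\psi^{(k)})$, regarded as a complex function on $M$ via the trivialization of $\cE_{\mathbb{R}}(1,1)$ induced by $\theta$. Writing $\psi^{(k)}$ as a section of $T\mathbb{C}^2|_M$ under the usual identification with $T_{(1,0)}$, one has $\psi^{(k)}\equiv (\mathrm{Re}\,f)T + (\mathrm{Im}\,f)JT \pmod{\mathbb{C}\otimes H}$, where $J$ is the standard complex structure on $\mathbb{C}^2$. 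The aim is then to precompose $\psi_t$ with a family of contact diffeomorphisms whose own order-$k$ variation is the contact Hamiltonian vector field $V_{-\mathrm{Re}\,f}$, thereby cancelling the real part of $f$ at order $k$.

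Next I would build the reparametrizing family. Let $V:=V_{-\mathrm{Re}\,f}$, let $\{\rho_s\}_{s\in\mathbb{R}}$ be its flow (globally defined since $M$ is compact, and consisting of contact diffeomorphisms of $(M,H)$), and put $\varphi_t:=\rho_{t^k/k!}$. Then $\varphi_t$ is a smooth family of contact diffeomorphisms with $\varphi_0=\mathrm{id}_M$, and by the chain rule $\left.\frac{d^j}{dt^j}\right|_{t=0}\varphi_t=0$ for $1\le j\le k-1$, while $\left.\frac{d^k}{dt^k}\right|_{t=0}\varphi_t=\left.\frac{d}{ds}\right|_{s=0}\rho_s=V$. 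Set $\psi'_t:=\psi_t\circ\varphi_t$. Since $\varphi_t$ is a diffeomorphism of $M$, we have $\psi'_t(M)=\psi_t(M)$, which is strictly pseudoconvex, and $\psi'_t:M\to\psi'_t(M)$ is a composition of contact diffeomorphisms, so $\psi'_t$ is again a contact parametrized deformation.

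Finally I would compute $\psi'^{(k)}$. Because both $\psi_t=\psi_0+O(t^k)$ and $\varphi_t=\mathrm{id}_M+O(t^k)$, all cross terms in the Taylor expansion of $\psi_t\circ\varphi_t$ are $O(t^{2k})$; since $2k>k$ this shows $\psi'_t=\psi_0+O(t^k)$ (so $\psi'^{(k)}$ is defined) and, at order $k$,
\[
\psi'^{(k)}=\psi^{(k)}+V
\]
as sections of $T\mathbb{C}^2|_M$ (the two contributions being, respectively, $\psi^{(k)}$ from the $t$-variation of $\psi_t$ frozen at $x$, and $V$ from the $t$-variation of $\psi_0\circ\varphi_t$). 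Reducing modulo $\mathbb{C}\otimes H$ and using $V\equiv -(\mathrm{Re}\,f)T\pmod{H}$ gives $\psi'^{(k)}\equiv (\mathrm{Im}\,f)JT\pmod{\mathbb{C}\otimes H}$, whence $\btheta(\psi'^{(k)})=i\,\mathrm{Im}\,f$ is imaginary. The only point needing care is this order-$k$ chain rule for the composition — verifying the vanishing of the lower variations and the additive splitting at order $k$ — which is immediate from $O(t^k)\cdot O(t^k)=O(t^{2k})$; everything else is a verbatim repetition of the $k=1$ argument in \cref{lem:making-psi-dot-imaginary}.
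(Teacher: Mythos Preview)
Your proof is correct and is precisely the straightforward generalization the paper intends; the paper does not supply a separate proof for this lemma, merely remarking that it generalizes \cref{lem:making-psi-dot-imaginary}, and your reparametrization $\varphi_t=\rho_{t^k/k!}$ by the flow of $V_{-\mathrm{Re}\,f}$ together with the order-$k$ chain rule argument is exactly what is required.
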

Finally, in order to run the argument we used in the proof of \cref{thm:obstruction-dot-zero-implies-Q-dot-zero} we need another straightforward generalization of our results from the case of first order deformations:
\begin{lemma}\label{lem:formulae-for-deformations-trivial-to-higher-order}
Let $\psi_t:S^3\to \mathbb{C}^2$, $t\in[0,\epsilon)$, be a smooth contact parametrized deformation of the unit sphere $S^3\subset \mathbb{C}^2$, and let $(S^3,H,J_t)$ denote the corresponding family of CR structures on $S^3$. If $\psi_t = \psi_0 + O(t^k)$ and $f=\btheta(\psi^{(k)})$, then
\begin{equation}
J_t = J + \frac{1}{k!}t^k E + O(t^{k+1})
\end{equation}
where $E = -2(\mathcal{D}\bar{f}+\bar{\mathcal{D}}f)$. Moreover,
\begin{equation}\label{eqn:Q-first-term-higher-order-in-t}
Q_t = \frac{1}{k!}t^k \RJ E +  O(t^{k+1}),
\end{equation}
and
\begin{equation}
\mathcal{O}_t = \frac{1}{3k!}t^k \mathcal{D}^*(\RJ E)^{(2,0)}  +  O(t^{k+1}).
\end{equation}
\end{lemma}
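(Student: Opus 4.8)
The plan is to exploit the hypothesis $\psi_t=\psi_0+O(t^k)$: it forces every quantity attached to the deformation to be $O(t^k)$ in $t$, so that the order-$t^k$ Taylor coefficient of everything behaves exactly like a first-order deformation with variational vector field $\psi^{(k)}$, and the first-order computations of \cref{lem:variational-vector} and of \cref{eqn:RJ-formula-part1,eqn:RJ-formula-part2,eqn:dotO-for-sphere-deformations} apply essentially verbatim. To get the formula for $E$: since $\psi_0=\mathrm{id}$ and $\psi_t=\psi_0+O(t^k)$, the induced frame $Z^t_{\oneb}=Z_{\oneb}+\varphi_{\oneb}{}^1(t)Z_1$ for ${}^tT^{0,1}$ has $\varphi_{\oneb}{}^1(t)=O(t^k)$, so $\left.\frac{d^i}{dt^i}\right|_{t=0}\varphi_{\oneb}{}^1(t)=0$ for $i<k$. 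Differentiating the CR-function equations $Z^t_{\oneb}\psi^j_t=0$ exactly $k$ times at $t=0$, the Leibniz rule kills every mixed term and leaves
\[
Z_{\oneb}\xi^j + \varphi_{\oneb}{}^1\, Z_1\psi^j_0 = 0, \qquad \xi^j:=\left.\tfrac{d^k}{dt^k}\right|_{t=0}\psi^j_t,\quad \varphi_{\oneb}{}^1:=\left.\tfrac{d^k}{dt^k}\right|_{t=0}\varphi_{\oneb}{}^1(t),
\]
which is formally identical to \cref{eqn:infinitesimal-deformation-equation}. Hence the argument in the proof of \cref{lem:variational-vector} goes through word for word, giving $\psi^{(k)}=fT+if^1Z_1\;\mathrm{mod}\;T^{0,1}$ with $f=\btheta(\psi^{(k)})$, $f^1=\nabla^1f$, and $\varphi_{\oneb}{}^1=-i(\nabla_{\oneb}\nabla^1f-iA_{\oneb}{}^1f)$, $\varphi_1{}^{\oneb}=\overline{\varphi_{\oneb}{}^1}$. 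Substituting into \cref{eqn:deformation-tensor-local} and comparing with \cref{eqn:calD-formula} yields $E:=\left.\frac{d^k}{dt^k}\right|_{t=0}J_t=-2(\mathcal{D}\bar f+\bar{\mathcal{D}}f)$, i.e.\ $J_t=J+\tfrac{1}{k!}t^kE+O(t^{k+1})$, exactly as in the first-order case.

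For the expansions of $Q_t$ and $\mathcal{O}_t$ I would use naturality together with $Q_0=\mathcal{O}_0=0$ (the sphere). With the fixed contact form $\theta$, the tensor $Q_t$ is a smooth, polynomial-type function of $\varphi_{\oneb}{}^1(t)$ and finitely many of its covariant derivatives, vanishing when $\varphi_{\oneb}{}^1(t)\equiv0$. Taylor-expanding this function about $0$ and inserting $\varphi_{\oneb}{}^1(t)=\tfrac{1}{k!}t^k\varphi_{\oneb}{}^1+O(t^{k+1})$: the linear term contributes $\tfrac{1}{k!}t^k\,\RJ E+O(t^{k+1})$, since $\RJ$ is by definition the linearization of $Q$ at the sphere (cf.\ \cref{eqn:RJ-formula-part1,eqn:RJ-formula-part2}) and $E$ is the deformation tensor built from $\varphi_{\oneb}{}^1$ via \cref{eqn:deformation-tensor-local}, while the quadratic-and-higher terms are $O(t^{2k})=O(t^{k+1})$ since $k\ge1$. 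This gives \cref{eqn:Q-first-term-higher-order-in-t}. Running the same argument for $\mathcal{O}_t$ and using $\dot{\mathcal{O}}=\tfrac{1}{3}\mathcal{D}^*\dot Q^{(2,0)}$ from \cref{eqn:dotO-for-sphere-deformations} to identify the linearization of $\mathcal{O}$ at the sphere gives $\mathcal{O}_t=\tfrac{1}{3k!}t^k\mathcal{D}^*(\RJ E)^{(2,0)}+O(t^{k+1})$; alternatively one takes the $(2,0)$ part of \cref{eqn:Q-first-term-higher-order-in-t}, applies $\mathcal{D}^*=\mathcal{D}^*_{J_t}$, and notes that $\mathcal{D}^*_{J_t}-\mathcal{D}^*_{J_0}=O(t^k)$ contributes only at order $t^{2k}$ on $Q_t^{(2,0)}=O(t^k)$.

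The one step I would want to record explicitly, rather than wave at, is the Taylor-remainder estimate underlying the second paragraph: uniformly in $t$ (here one uses compactness of $S^3$ and smoothness in $t$), the error in linearizing $Q$ and $\mathcal{O}$ about the round sphere is genuinely $O(\|\varphi_{\oneb}{}^1(t)\|^2)$ in the relevant $C^m$ norm. This is immediate from the explicit polynomial shape of \cref{eqn:Cartan-umbilical-tensor} together with the smooth dependence of $R_t$, $A_{11}(t)$ and $\nabla^t$ on $\varphi_{\oneb}{}^1(t)$; it is the only point that uses genuine analysis rather than bookkeeping generalizing the first-order arguments, and is the expected main (though mild) obstacle.
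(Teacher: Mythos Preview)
Your proposal is correct and follows exactly the approach the paper has in mind: the paper states this lemma without proof, calling it ``another straightforward generalization of our results from the case of first order deformations,'' and your argument fills in precisely those details by reducing the $k$th-order computation to the first-order one via the Leibniz rule and the vanishing of all intermediate derivatives. The Taylor-remainder point you flag is exactly the right place to be careful, and your justification via the explicit polynomial form of \cref{eqn:Cartan-umbilical-tensor} and smooth dependence on $\varphi_{\oneb}{}^1(t)$ is adequate.
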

Combining these lemmas we may now prove \cref{thm:FormalDeformations-v2}.
\begin{proof}[Proof of \cref{thm:FormalDeformations-v2}]
We note first that the case $k=1$ is trivial, and $k=2$ is the content of \cref{thm:obstruction-dot-zero-implies-Q-dot-zero}.  Now, fix a $k\geq 3$. \cref{thm:obstruction-dot-zero-implies-Q-dot-zero} implies $\dot{Q}=\RJ E = 0$, where $E = \dot J= -2(\mathcal{D}\bar{f}+\bar{\mathcal{D}}f)$ and $f=\btheta(\dot \psi)$. Since \cref{eqn:DeformationComplex2} is exact at $\mathcal{D}ef(S^3)$, it follows that $E$ is a trivial infinitesimal deformation tensor. By \cref{lem:deformation-trivial-at-order-k} (or \cref{lem:contact-reparametrization-Jdot}), contact reparametrizing $\psi_t$ if necessary, we may assume $E=0$ (i.e. $J_t = J+O(t^2)$). By \cref{lem:ambient-CR-normalization}, composing $\psi_t$ with a smooth family of CR embeddings if necessary, we may assume that $\psi_t=\psi_0+O(t^2)$. By \cref{lem:making-psi-k-imaginary}, further contact reparametrizing $\psi_t$ if necessary, we may assume that $\btheta(\psi^{(2)})$ is imaginary. By \cref{lem:geometric-invariance-of-higher-order-vanishing,lem:biholomorphic-invariance-of-order-of-vanishing} (cf.\ \cref{rem:geometric-invariance}) the conditions $Q_t=O(t^{2})$ and $\mathcal{O}_t=O(t^k)$ are preserved under these normalizations on $\psi_t$. These observations form the base case of an induction. Let $\ell\in \{2,\ldots,k-1\}$ and suppose that $Q_t=O(t^{\ell})$ and that, after contact reparametrizing and composing $\psi_t$ with a smooth family of CR embeddings if necessary, $\psi_t=\psi_0+O(t^{\ell})$ and $\btheta(\psi^{(\ell)})$ is imaginary. Since $f=\btheta(\psi^{(\ell)})$ is imaginary, by \cref{lem:formulae-for-deformations-trivial-to-higher-order} we have $J_t=J+\frac{1}{\ell !}t^\ell E^{(\ell)} + O(t^{\ell+1})$ with $E^{(\ell)} = 2(\mathcal{D}f-\bar{\mathcal{D}}f)$. But then $\mathcal{O}_t = \frac{1}{3\ell !}t^{\ell} \mathcal{D}^*(\RJ E)^{(2,0)}  +  O(t^{\ell+1})$, where
\begin{equation*}
\mathcal{D}^*(\RJ E)^{(2,0)} = \mathcal{D}^*(2\mathcal{R}^{\natural}\,\mathcal{D}f-2\mathcal{R}^+ \bar{\mathcal{D}}f)
= -4\mathcal{D}^*\mathcal{R}^+ \bar{\mathcal{D}}f.
\end{equation*}
Computing as in the proof of \cref{thm:obstruction-dot-zero-implies-Q-dot-zero}, with respect to the standard admissible coframe on the CR $3$-sphere we obtain $\mathcal{D}^*\mathcal{R}^+ \bar{\mathcal{D}}f = \frac{1}{3}f_{\oneb\oneb}{}^{\oneb\oneb}{}_{11}{}^{11}$, where the indices denote covariant derivatives. But since $\ell<k$, we again have $f_{\oneb\oneb}{}^{\oneb\oneb}{}_{11}{}^{11}=0$. By the same argument as in the proof of \cref{thm:obstruction-dot-zero-implies-Q-dot-zero} it follows that $f_{\oneb\oneb}{}^{\oneb\oneb}{}_{11}=0$. But $(\RJ E)^{(2,0)}= \frac{1}{3}f_{\oneb\oneb}{}^{\oneb\oneb}{}_{1}{}^{\oneb}\theta^1\otimes Z_{\oneb}$, so $\RJ E=0$ and by \cref{eqn:Q-first-term-higher-order-in-t} we have $Q_t=O(t^{\ell+1})$. In order to be able to continue the induction we observe $\RJ E=0$ implies $E$ is a trivial infinitesimal deformation tensor so that by \cref{lem:deformation-trivial-at-order-k,lem:ambient-CR-normalization,lem:making-psi-k-imaginary}, after contact reparametrizing and composing $\psi_t$ with a smooth family of CR embeddings if necessary, we obtain $\psi_t=\psi_0+O(t^{\ell+1})$ with $\btheta(\psi^{(\ell+1)})$ imaginary. By \cref{lem:geometric-invariance-of-higher-order-vanishing,lem:biholomorphic-invariance-of-order-of-vanishing} (again cf.\ \cref{rem:geometric-invariance}) the conditions $Q_t=O(t^{\ell+1})$ and $\mathcal{O}_t=O(t^k)$ are preserved under these normalizations on $\psi_t$. The result follows.
\end{proof}

\subsection{An Example: Real Ellipsoids in \texorpdfstring{$\mathbb{C}^2$}{C2}} \label{sec:real-ellipsoids}
Let $S^3$ be the standard CR $3$-sphere. Consider the family of ellipsoids defined by $r_t=0$, where
$$
r_t:= 1 - (|z|^2 + |w|^2) - t \left(A (\mathrm{Re}\,z)^2 + B (\mathrm{Re}\,w)^2 \right), \quad t\geq 0.
$$
In \cite{EbenfeltZaitsev-arxiv2016} it is shown that, after pulling back the corresponding CR structures to $S^3$ one has
$$
Q_t=\frac{1}{2}t^2{Q^{(2)}} +O(t^3),
$$
where $Q^{(2)}\not\equiv 0$ on the sphere $S^3$. Thus, $\dot Q=0$ and therefore  \cref{thm:obstruction-dot-zero-implies-Q-dot-zero} does not give us any information about the order of vanishing (or nonvanishing) of the obstruction. However, by \cref{thm:FormalDeformations-v2}, we conclude that there is $\mathcal O^{(2)}$ not identically zero on $S^3$ such that
$$
\mathcal O_t=\frac{1}{2}t^2{\mathcal{O}^{(2)}} + O(t^3).
$$

\section{The CR tractor calculus}\label{sec:TractorCalculus}

Canonically associated with any CR $3$-manifold $(M,H,J)$ is a (torsion free, normal) Cartan geometry of type $(\mathrm{PU}(2,1),K)$, where the projective unitary group $\mathrm{PU}(2,1)$ acts on the CR $3$-sphere by fractional linear transformations, and $K$ is the stabilizer subgroup of a point, so that $S^3=\mathrm{PU}(2,1)/K$. Moreover, this correspondence induces in a natural way an equivalence of categories \cite{CapSlovak2009}. In particular, this means that infinitesimal symmetries and deformations of CR $3$-manifolds may equivalently be described in terms of infinitesimal symmetries and deformations of the corresponding Cartan geometry. The results of taking this point of view are described in detail for the more general case of parabolic geometries in \cite{Cap2008}. Below we recall in our specific setting a result of \v{C}ap \cite{Cap2008} on the Cartan geometric descripton of infinitesimal symmetries, and use this result to prove \cref{thm:Symmetries}. The result of \cite{Cap2008} we require is formulated in terms of \emph{tractor calculus} \cite{CapGover2002}. The tractor calculus on a (parabolic) Cartan geometry modelled on $G/P$ is the calculus of associated vector bundles induced by representations of $G$ (the so-called \emph{tractor bundles}). We recall below the necessary background on the tractor calculus of CR $3$-manifolds. Rather than first constructing the CR Cartan connection, our approach is to directly construct the standard tractor bundle and connection, specializing the treatment of \cite{GoverGraham2005} to the $3$-dimensional case. The Cartan bundle and connection may be readily recovered from this by passing to an adapted frame bundle. This gives a direct and highly practical approach to the Cartan geometry of CR $3$-manifolds.

As is usual in CR geometry, it will be convenient for us to work with the group $G=\mathrm{SU}(2,1)$, and the stabilizer subgroup $P$ giving $S^3=G/P$, rather than $(\mathrm{PU}(2,1),K)$. The Cartan geometry of type $(\mathrm{PU}(2,1),K)$ corresponding to $(M,H,J)$ may be lifted to a Cartan geometry of type $(G,P)$ if and only if the holomorphic tangent bundle (or equivalently the canonical bundle) admits a cube root (see, e.g., \cite{CapSlovak2009}). For any CR $3$-manifold such a lift always exist locally, and globally for strictly pseudoconvex hypersurfaces in $\mathbb{C}^2$. We will always assume that $(M,H,J)$ admits such a lift, i.e. that the integral first Chern class of $T^{1,0}$ is divisible by $3$. This is also a necessary condition for the global existence of the standard tractor bundle, since this is induced by a $G$-representation that does not descend to a $\mathrm{PU}(2,1)$-representation. The \emph{adjoint tractor bundle}, on the other hand, which is induced by the adjoint representation of $G$, is always globally well defined. These bundles are discussed in detail below.

\subsection{CR densities and holomorphic tangent vectors}\label{subsec:CR-densities}
Let $(M,H,J)$ be a CR $3$-manifold, and let $\Lambda^{1,0}$ denote the complex rank $2$ bundle of $(1,0)$-forms on $M$. The bundle $\Lambda^{2,0}=\Lambda^2(\Lambda^{1,0})$ of $(2,0)$-forms is referred to as the \emph{canonical line bundle} of $M$, and denoted by $\scrK$. We assume throughout that its dual $\scrK^*$ admits a (global) cube root, which we fix and denote by $\cE(1,0)$. (If $M$ is a strictly pseudoconvex hypersurface in $\mathbb{CP}^2$, then we may take the complex line bundle $\cE(1,0)$ to be $\mathcal{O}(1)|_M$ where $\mathcal{O}(1)$ is the hyperplane bundle on $\mathbb{CP}^2$.) We then define the \emph{CR density line bundle} of weight $(w,w')$ to be $\cE(w,w') = \cE(1,0)^w \otimes \overline{\cE(1,0)}^{w'}$, where $w,w'\in \mathbb{C}$ with $w-w'\in \mathbb{Z}$. Note that for $w$ real the bundle $\cE(w,w)$ is invariant under conjugation, and hence contains a real subbundle $\cE_{\mathbb{R}}(w,w)$. The CR density bundles exhaust the so called natural line bundles on CR manifolds \cite{CapSlovak2009}, the upshot of which is that we will be able to naturally identify all the more familiar line bundles on $M$ with one of these bundles. Though we will not do away with the usual bundles, it will be useful to record their weights as CR density bundles. Note that by definition $\cE(3,0)=\scrK^*$, so $\cE(-3,0)=\scrK$.

Trivializing the bundle $TM/H$ determines a contact form on $M$ via the natural map $TM\rightarrow TM/H$.
Similarly, a choice of non-vanishing section $\zeta$ (i.e.\ a trivialization) of $\scrK$ determines canonically a contact form $\theta$ on $M$ by the requirement \cite{Farris1986} (see also \cite{Lee1988}) that
\begin{equation}\label{eq:volume-normalized}
\theta\wedge \dee\theta = i\theta \wedge (T\intprod \zeta)\wedge (T\intprod\overline{\zeta}).
\end{equation}
In this case we say that $\theta$ is \emph{volume normalized} with respect to $\zeta$. Combining these observations, we may realize $TM/H$ as a real CR density line bundle (cf.\ \cref{sec:pseudohermitian}) as follows. A contact form $\theta$ determines canonically a section $|\zeta|^2=\zeta\otimes\overline{\zeta}$ of $\scrK\otimes\overline{\scrK} = \cE(-3,-3)$ by the condition that $\zeta$ satisfy \cref{eq:volume-normalized} ($\zeta$ is only determined up to phase at each point). If we rescale $\theta$ to $\thetah=e^{\Ups}\theta$, with $\Ups\in C^{\infty}(M,\mathbb{R})$, then the corresponding section $|\hat{\zeta}|^2$ equals $e^{3\Ups}|\zeta|^2$. Thus, the map which assigns to a contact form $\theta$ the section $|\zeta|^{2/3}$ of $\cE_{\mathbb{R}}(-1,-1)$ extends to a canonical isomorphism of $H^{\perp}$ with $\cE_{\mathbb{R}}(-1,-1)$. Dually $TM/H$ is canonically isomorphic to $\cE_{\mathbb{R}}(1,1)$, explaining the notation we used earlier. Recall that this identification gives us a tautological $1$-form $\btheta$ of weight $(1,1)$, corresponding to the map $TM\to TM/H=\cE_{\mathbb{R}}(1,1)$.

We define the \emph{CR Levi form} $\Levi:T^{1,0}\otimes T^{0,1} \to \mathbb{C}TM/\mathbb{C}H$ by
\begin{equation*}
\Levi(U,\overline{V}) = 2i[U,\overline{V}]\;\, \mathrm{mod}\; \mathbb{C}H.
\end{equation*}
On a strictly pseudoconvex CR $3$-manifold the CR Levi form is a bundle isomorphism, so we have $T^{1,0}\otimes T^{0,1} \cong \mathbb{C}TM/\mathbb{C}H = \cE(1,1)$. The CR Levi form may be interpreted as a Hermitian bundle metric on $T^{1,0}\otimes\cE(-1,0)$, and we would like to have a more concise notation for bundles like this one. We use the symbol $\cE$ decorated with appropriate indices to denote the tensor bundles constructed from $T^{1,0}$ and $T^{0,1}$. For example, $\cE^1=T^{1,0}$, $\mathcal{E}_{\oneb}=(T^{0,1})^*$, and $\mathcal{E}_{1\oneb}=(T^{1,0})^*\otimes(T^{0,1})^*$. We will now generally use abstract index notation for sections of these bundles. So, for example, $V^1$ may denote a global section of $\cE^1=T^{1,0}$ (previously written locally as $V^1Z_1$). This keeps the notation from getting too heavy, and allows us to globalize our previous local formulas. Generally we denote the tensor product of a complex vector bundle $\mathcal{V}$ on $M$ with $\cE(w,w')$ by appending $(w,w')$, as in $\mathcal{V}(w,w')$. The CR Levi form will be thought of as a section $\bh_{1\oneb}$ of $\cE_{1\oneb}(1,1)$, with inverse $\bh^{1\oneb}$. The Levi form will be used to identify $\cE^{1\oneb}$ with $\cE(1,1)$, and $\cE_{1\oneb}$ with $\cE(-1,-1)$, and to raise and lower indices without comment.

From the general theory \cite{CapSlovak2009} we know that the bundle $\cE^1=T^{1,0}$ can be identified with a density bundle of some weight. Since $\cE^{1\oneb}=\cE(1,1)$ we see that $\cE^1=\cE(w,1-w)$ for some $w$. Recalling that $\Lambda^3$ may be canonically identified with $\cE_{\mathbb{R}}(-2,-2)$, the exact weight can be determined by noting that the wedge product gives a canonical identification $\scrK\otimes (T^{0,1})^* = \mathbb{C}\Lambda^3$, i.e. $\cE(-3,0)\otimes \cE_{\oneb} = \cE(-2,-2)$, so that $\cE_{\oneb} = \cE(1,-2)$ and hence $\cE_1=\cE(-2,1)$ and $\cE^1=\cE(2,-1)$.

\subsection{Weighted pseudohermitian calculus}

In order to construct the standard tractor bundle and connection directly from the standard pseudohermitian calculus, we first observe that the Tanaka-Webster connection $\nabla$ of a pseudohermitian structure $\theta$ extends naturally to act on the CR density bundles, since $\nabla$ acts on the canonical bundle $\scrK$. Since the Tanaka-Webster connection of $\theta$ preserves $\theta$, and also preserves the section $|\zeta|^2$ of $\scrK\otimes\overline{\scrK} = \cE(-3,-3)$ determined by volume normalization, the Tanaka-Webster connection respects the CR invariant identification of $TM/H$ with $\cE_{\mathbb{R}}(1,1)$. Another way of saying this is that $\nabla \btheta = 0$. A similar argument shows that $\nabla$ preserves the CR Levi form, $\nabla \Levi =0$. Hence, the Tanaka-Webster connection of $\theta$ respects all of the CR invariant identifications made in \cref{subsec:CR-densities}. We therefore make use of CR densities whenever convenient.

Given a choice of admissible coframe $(\theta,\theta^1,\theta^{\oneb})$ we now take components of tensors with respect to $(\btheta,\theta^1,\theta^{\oneb})$. This means that if $V$ is a tangent vector, then $V^0$ has weight $(1,1)$, and is globally well defined (and independent of $\theta$). A choice of global contact form allows us to decompose the complexified tangent bundle $\mathbb{C}TM$ as $\cE^1\oplus\cE^{\oneb}\oplus\cE(1,1)$. Using abstract index notation we may therefore decompose $V$ globally as $V\overset{\theta}{=} (V^1,V^{\oneb},V^0)$. If $(\theta,\theta^1,\theta^{\oneb})$ and $\thetah=e^{\Ups}\theta$, then writing $\thetah^1=\theta^1+i\Ups^1\btheta$ where $\Ups^1=\nabla^1\Ups$ it is easy to see that $(\thetah, \thetah^1,\thetah^{\oneb})$ is again an admissible coframe. It follows that if $V\overset{\theta}=(V^1,V^{\oneb},V^0)$, then
\begin{equation}
V\overset{\thetah}{=} (V^1+iV^0\Ups^1,V^{\oneb}-iV^0\Ups^{\oneb},V^0).
\end{equation}
Dually, for a $1$-form $\eta$ with $\eta\overset{\theta}{=}(\eta_1,\eta_{\oneb},\eta_0)$ we have
\begin{equation}\label{1-FormTransformation}
\eta \overset{\thetah}{=} (\eta_1,\eta_{\oneb},\eta_0 - i\Ups^1\eta_1 +i\Ups^{\oneb}\eta_{\oneb}).
\end{equation}

We will need to commute derivatives of weighted tensor fields, for this we need to know the curvature of the CR density bundles. Let $\tau$ be a section of $\cE(w,w')$. From \cref{eqn:Ricci-identity,eqn:Tanaka-Webster-commuting-10-derivatives} one easily obtains that
\begin{align}
\label{eqn:commuting-11bar-derivatives-on-densities}
\nabla_1\nabla_{\oneb} \tau -\nabla_{\oneb}\nabla_1 \tau + i \bh_{1\oneb}\nabla_0 \tau & = \frac{w-w'}{3}R \bh_{1\oneb}\, \tau \,; \\
\label{eqn:commuting-10-derivatives-on-densities}
\nabla_1\nabla_{0} \tau -\nabla_{0}\nabla_1 \tau - A^{\oneb}{_{1}}\nabla_{\oneb}\tau & = \frac{w-w'}{3}(\nabla_{\oneb} A^{\oneb}{}_1) \tau,
\end{align}
cf.\ \cite[Proposition 2.2]{GoverGraham2005}. These formulae can be interpreted globally, using the abstract index formalism.

In order to directly check the CR invariance of the CR tractor connection, expressed with respect to a pseudohermitian structure, we need the following transformation laws for the Tanaka-Webster connection. If $\tau$ is a section of $\cE(w,w)$ and $\thetah=e^{\Ups}\theta$ then \cite[Proposition 2.3]{GoverGraham2005} 
\begin{align}
\label{eqn:TW1-transform-densities}\nablah_{1}\tau & =\nabla_{1}\tau +w\Ups_{1}\tau \,; \\
\label{eqn:TW1bar-transform-densities}\nablah_{\oneb}\tau & =\nabla_{\oneb}\tau+w'\Ups_{\oneb}\tau \,;\\
\label{eqn:TW0-transform-densities}\nablah_{0}\tau & =\nabla_{0}\tau-i\Ups^{1}\nabla_{1}\tau+i\Ups^{\oneb}\nabla_{\oneb}\tau\\
\nonumber & \quad+\tfrac{1}{3}\left[(w+w')\Ups_{0}+iw\Ups^{1}{_{1}}-iw'\Ups^{\oneb}{_{\oneb}}+i(w'-w)\Ups^{1}\Ups_{1}\right]\tau
\end{align}
where indices attached to $\Ups$ denote covariant derivatives, so, e.g., $\Ups_{1}{}^{1}=\nabla^1\nabla_1\Ups$. Note that in \cref{eqn:TW0-transform-densities} one of $\Ups_0$, $\Ups^{1}{_{1}}$, $\Ups^{\oneb}{_{\oneb}}$ can be eliminated by using that $\Ups_0=i(\Ups^1{_1}-\Ups^{\oneb}{_{\oneb}})$. Now either by direct calculation, or by noting that $\cE^1=\cE(2,-1)$ and using the above, we have
\begin{align}
\label{eqn:TW1-V1-transform} \nablah_{1}V^1 & =\nabla_{1}V^1+2\Upsilon_{1}V^1 \,;\\
\label{eqn:TW1bar-V1-transform} \nablah_{\oneb}V^1 & =\nabla_{\oneb}V^1 - \Upsilon_{\oneb}V^1 \,;\\
\label{eqn:TW0-V1-transform} \nablah_{0}V^1 & =\nabla_{0}V^1 -i\Upsilon^{1}\nabla_{1}V^1+i\Upsilon^{\oneb}\nabla_{\oneb}V^1+i(\Upsilon^{1}{_{1}}-\Upsilon^{1}\Upsilon_{1})V^1
\end{align}
for any section $V^1$ of $\cE^1$.
We also need the results of \cite[Lemma 2.4]{Lee1988} that if $\thetah=e^{\Ups}\theta$ then
\begin{align}
\hat{R} &= R - 2(\Ups_{1}{}^1+\Ups^1{}_{1} + \Ups^1\Ups_1) \,; \\
\hat{A}_{11} &= A_{11} + \Ups_{11} - \Ups_1\Ups_1
\end{align}
where $R$ and $A_{11}$ are interpreted as densities of respective weights $(-1,-1)$ and $(1,1)$.
We also to introduce the following higher order curvature quantities, which arise as components of the Ricci tensor of the Fefferman metric \cite{Lee1988,GoverGraham2005},
\begin{equation}\label{eqn:T_1}
T_1 = \frac{1}{12}(\nabla_1 R - 4i\nabla^1A_{11}),
\end{equation}
a section of $\cE_1(-1,-1)$, and the real $(-2,-2)$ density
\begin{equation}\label{eqn:S}
S = -(\nabla^1T_1 + \nabla^{\oneb}T_{\oneb} + \frac{1}{16}R^2 - A^{11}A_{11}).
\end{equation}
The reason these arise in the formula for the CR tractor connection below can be seen from the intimate relation between the CR tractor connection and the conformal tractor connection of the Fefferman metric \cite{CapGover2008} (cf.\ \cite{NurowskiSparling2003}). These higher order curvature quantities transform according to \cite{GoverGraham2005}
\begin{align*}
\hat{T}_1 &= T_1 +\frac{i}{2}\Ups_{01} + \frac{1}{4}R\Ups_1 - iA_{11}\Ups^1 + \frac{1}{2}\Ups_{11}\Ups^1 - \frac{1}{2}\Ups_{1\oneb}\Ups^{\oneb} - \frac{1}{2}(\Ups_1)^2\,\Ups^1\\
\hat{S} &=
S + \frac{1}{2}\Ups_{00} - 3(\Ups^1T_1+\Ups^{\oneb}T_{\oneb}) +i(\Ups_{0\oneb}\Ups^{\oneb} - \Ups_{01}\Ups^1)  + \frac{3i}{2}(A_{11}\Ups^1\Ups^1 - A_{\oneb\oneb}\Ups^{\oneb} \Ups^{\oneb})\\
& \phantom{=} -\frac{1}{4}(\Ups_0)^2  - \frac{3}{4}R\Ups_1\Ups^1 -\frac{1}{2}(\Ups_{11}\Ups^1\Ups^1 + \Ups_{\oneb\oneb}\Ups^{\oneb}\Ups^{\oneb}) + \frac{1}{2}(\Ups_{1\oneb}+\Ups_{\oneb 1})\Ups^1\Ups^{\oneb} + \frac{3}{4}(\Ups_1\Ups^1)^2.
\end{align*}

\subsection{Tractor calculus}

Let $\mathbb{C}^{2,1}$ denote the defining representation of $\mathrm{SU}(2,1)$. Let $P$ be the subgroup of $G=\mathrm{SU}(2,1)$ stabilizing a fixed isotropic line $\ell$ in $\mathbb{C}^{2,1}$. Let $(M,H,J)$ be a CR $3$-manifold and let $(\mathcal{G}\to M,\omega)$ be the canonical Cartan geometry of type $(G,P)$ corresponding to the CR structure on $M$. If $\mathbb{V}$ is an irreducible representation of $\mathrm{SU}(2,1)$ then the bundle $\mathcal{V}=\mathcal{G}\times_P \mathbb{V}$ is called a \emph{CR tractor bundle}. Every irreducible representation $\mathbb{V}$ of $\mathrm{SU}(2,1)$ is contained in some tensor representation constructed from $\mathbb{C}^{2,1}$ and $(\mathbb{C}^{2,1})^*$ as a subspace of tensors satisfying certain symmetries and the trace-free condition. It follows that knowledge of the so called \emph{(CR) standard tractor bundle} $\mathcal{T}=\mathcal{G}\times_P \mathbb{C}^{2,1}$ is sufficient to recover all of the tractor bundles. The standard tractor bundle $\mathcal{T}\to M$ should be thought of as a $P$-vector bundle, which is equivalent to saying that it is canonically equipped with a signature $(2,1)$ Hermitian bundle metric (since $P\subset \mathrm{SU}(2,1)$) and that the fibers of $\mathcal{T}$ are canonically filtered vector spaces
\begin{equation*}
\mathcal{T}_x^1 \subset \mathcal{T}_x^0 \subset  \mathcal{T}_x, \qquad x\in M 
\end{equation*}
where $\mathcal{T}_x^1$ is an isotropic line and $\mathcal{T}_x^0 = (\mathcal{T}_x^1)^{\perp}$ (since $P$ preserves the filtration $\ell \subset \ell^{\perp} \subset \mathbb{C}^{2,1}$). The $P$-principal Cartan bundle $\mathcal{G}\to M$ may readily be recovered from the standard tractor bundle as the bundle of $P$-adapted frames, that is, frames where the first frame vector is chosen from $\mathcal{T}^1$, the second from $\mathcal{T}^0$, and the frame is normalized so that the signature $(2,1)$ bundle metric takes the form
\begin{equation*}
\left(\begin{array}{ccc}
0 & 0 & 1\\
0 & 1 & 0\\
1 & 0 & 0
\end{array}\right).
\end{equation*}
Moreover, the canonical CR Cartan connection $\omega$ on $\mathcal{G}\to M$ may equivalently be viewed as a linear connection $\nabla$ on $\mathcal{T}\to M$, called the \emph{tractor connection}, which preserves the bundle metric on $\mathcal{T}\to M$. The tractor connection on the standard tractor bundle induces a linear connection on each tractor bundle in the obvious way. Here we will construct $(\mathcal{T},\nabla)$ without reference to $(\mathcal{G},\omega)$.

Following \cite{GoverGraham2005} we take $\cT$ to be the set of equivalence classes of pairs $(\theta, (\sigma,\mu^1,\rho))$, where $\theta$ is a contact form and $(\sigma,\mu^1,\rho)\in \mathcal{E}(0,1)\oplus\mathcal{E}^{1}(-1,0)\oplus\mathcal{E}(-1,0)$, under the equivalence relation: $(\theta, (\sigma,\mu^1,\rho))\sim (\thetah, (\hat{\sigma},\hat{\mu}^1,\hat{\rho}))$ if $\thetah=e^{\Ups}\theta$ and
\begin{equation}\label{TractorTransformation}
\left(\begin{array}{c}
\hat{\sigma}\\
\hat{\mu}^1\\
\hat{\rho}
\end{array}\right)
=
\left(
\begin{array}{ccc}
1 & 0 & 0\\
\Ups^{1} & 1 & 0 \\
-\frac{1}{2}(\Ups^{1}\Ups_{1}-i\Ups_0) & - \Ups_{1} & 1
\end{array}
\right)\left(\begin{array}{c}
\sigma\\
\mu^{1}\\
\rho
\end{array}\right)
\end{equation}
where $\Ups_1=\nabla_1 \Ups$, $\Ups^1=\bh^{1\oneb}\Ups_{\oneb}$ with $\Ups_{\oneb}=\nabla_{\oneb} \Ups$, and $\Ups_0 = \nabla_0 \Ups$. The canonical filtration of $\mathcal{T}$ is immediately evident, fixing a contact form $\theta$ this is given by
\begin{equation*}
\mathcal{T}^1
=
\left\{\left(\begin{array}{c}
0\\
0\\
*
\end{array}\right)\right\}
\subset
\mathcal{T}^0
=
\left\{\left(\begin{array}{c}
0\\
*\\
*
\end{array}\right)\right\}
\subset
\mathcal{T}.
\end{equation*}
If $(\theta, (\sigma,\mu^1,\rho))\sim (\thetah, (\hat{\sigma},\hat{\mu}^1,\hat{\rho}))$ then one easily checks that
\begin{equation*}
 2\hat{\sigma}\hat{\rho} + \hat{\mu}^1\hat{\mu}_1 = 2\sigma\rho + \mu^1\mu_1,
\end{equation*}
which defines by polarization a signature $(2,1)$ Hermitian bundle metric $h$ on $\cT$.
We will adopt the abstract index notation $\cE^A$ for $\mathcal{T}$, and $\cE^{\Ab}$ for $\overline{\mathcal{T}}$, using capitalized Latin letters from the start of the alphabet for our abstract indices. The Hermitian bundle metric $h$ is then written as $h_{A\Bb}$. Decomposing $\cE^A$ with respect to any choice of contact form $\theta$, we have
\begin{equation*}
h_{A\Bb} = \left(
\begin{array}{ccc}
0 & 0 & 1\\
0 & \bh_{1\oneb} & 0 \\
1 & 0 & 0
\end{array}
\right).
\end{equation*}

The line bundle $\cE(-1,0)$ is naturally included in $\cT$ by the map
\begin{equation*}
\rho \mapsto \left(\begin{array}{c}
0\\
0\\
\rho
\end{array}\right).
\end{equation*}
The map $\cE(-1,0)\to\cE^A$ corresponds to a canonical section $\boldsymbol{Z}^A$ of $\cE^A\otimes\cE(1,0)$, known as the \emph{canonical tractor}. The canonical tractor also induces a canonical projection $\cE^A \to \cE(0,1)$ taking $v^A$ to $\sigma = h_{A\bar{B}}v^A\boldsymbol{Z}^{\bar{B}}$. This corresponds to the obvious projection
\begin{equation*}
\left(\begin{array}{c}
\sigma\\
\mu^1\\
\rho
\end{array}\right)
\mapsto \sigma.
\end{equation*}

If $M$ is a strictly pseudoconvex hypersurface in $\mathbb{CP}^2$, then $\cE(-1,0)$ is the restriction of the tautological line bundle $\mathcal{O}(-1)$ to $M$ and $\cT=\cE^A$ can be identified with the restriction of the tautological rank $3$ complex vector bundle over $\mathbb{CP}^2$ (coming from the projection $\mathbb{C}^3\setminus\{0\}\to\mathbb{CP}^2$) to $M$. The canonical tractor can then be identified with the Euler field on $\mathbb{C}^3$, whence the notation $\boldsymbol{Z}$. From this point of view, however, the origins of the tractor metric $h$ and particularly of the tractor connection are more subtle.

\subsubsection*{The tractor connection}
In order to define the tractor connection, we recall the higher order pseudohermitian curvatures $T_1$ and $S$ from \cref{eqn:T_1} and \cref{eqn:S}. With respect to a choice of contact form $\theta$ the tractor connection on a section $v^A \overset{\theta}{=} (\sigma,\mu^1,\rho)$ is then given by
\begin{equation}\label{TractorConnection1}
\nabla_1 v^{A}\overset{\theta}{=}
\left(\begin{array}{c}
\nabla_1\sigma\\
\nabla_1\mu^1 + \rho + \frac{1}{4} R \sigma\\
\nabla_1\rho-iA_{11}\mu^1-\sigma T_1
\end{array}\right),
\end{equation}
\begin{equation}\label{TractorConnection1b}
\nabla_{\oneb}v^{A}\overset{\theta}{=}
\left(\begin{array}{c}
\nabla_{\oneb}\sigma-\mu_{\oneb}\\
\nabla_{\oneb}\mu^1-iA_{\oneb}{^1}\sigma\\
\nabla_{\oneb}\rho-\frac{1}{4} R\mu_{\oneb}+\sigma T_{\oneb}
\end{array}\right),
\end{equation}
and
\begin{equation}\label{TractorConnection0}
\nabla_{0} v^{A}\overset{\theta}{=}
\left(\begin{array}{c}
\nabla_{0}\sigma-\frac{i}{12}R\sigma+i\rho\\
\nabla_{0}\mu^1+\frac{i}{6}R\mu^1-2i\sigma T^1\\
\nabla_{0}\rho-\frac{i}{12}R\rho-2iT_1\mu^1-iS\sigma
\end{array}\right).
\end{equation}
To verify that these formulae give rise to a well defined connection one needs to check that the right hand sides of \cref{TractorConnection1} and \cref{TractorConnection1b} transform according to \cref{TractorTransformation}. For \cref{TractorConnection0} one also needs to take into account the change in the Reeb direction, see \cref{1-FormTransformation}. From these formulae it is easy to see that the tractor connection preserves the tractor metric $h_{A\bar{B}}$.

The tractor curvature $\kappa$ is a $2$-form valued in (trace free skew-Hermitian) endomorphisms of the standard tractor bundle. Given a choice of contact form $\theta$, $\kappa$ may be decomposed into three components $\kappa_{1\oneb}{}_A{}^B$, $\kappa_{10}{}_A{}^B$, and $\kappa_{\oneb 0}{}_A{}^B$, defined by
\begin{align*}
\nabla_1\nabla_{\oneb} v^B -\nabla_{\oneb}\nabla_1 v^B + i \bh_{1\oneb}\nabla_0 v^B & = \kappa_{1\oneb A}{}^B v^A;\\
\nabla_1\nabla_{0} v^B - \nabla_{0}\nabla_1 v^B - A^{\oneb}{_{1}}\nabla_{\oneb}v^B &= \kappa_{10 A}{}^B v^A;\\
\nabla_{\oneb}\nabla_{0} v^B - \nabla_{0}\nabla_{\oneb} v^B - A^{1}{_{\oneb}}\nabla_{1}v^B &= \kappa_{\oneb 0A }{}^B v^A
\end{align*}
for any section $v^A$ of $\cE^A$ (the tractor connection is coupled with the Tanaka-Webster connection of $\theta$ in order to define the iterated covariant derivatives). By definition the component $\kappa_{1\oneb A}{}^B$ of the tractor curvature is a CR invariant, i.e.\ it does not depend on the choice of $\theta$. However, a straightforward calculation shows that $\kappa_{1\oneb}{}_A{}^B=0$. The vanishing of $\kappa_{1\oneb}{}_A{}^B$ implies, by \cref{1-FormTransformation}, that $\kappa_{10 A}{}^B$ and $\kappa_{\oneb 0 A}{}^B$ are CR invariant (this phenomenon is special to $3$-dimensional CR structures). A straightforward calculation using the above formulae for the tractor connection, the formulae \cref{eqn:commuting-11bar-derivatives-on-densities,eqn:commuting-10-derivatives-on-densities} for the curvature of the density line bundles, and the definitions of $T_1$ and $S$, gives
\begin{equation}\label{eqn:tractor-curvature}
\ka_{10 A}{}^B v^A \overset{\theta}{=} \left(\begin{array}{c}
0\\
0\\
\sigma Y_1 + i\mu^1 Q_{11}
\end{array}\right) = \left(
\begin{array}{ccc}
0 & 0 & 0\\
0 & 0 & 0 \\
Y_1 & iQ_{11} & 0
\end{array}
\right)
\left(\begin{array}{c}
\sigma\\
\mu^1\\
\rho
\end{array}\right)
\end{equation}
where $Q_{11}$ is given by \cref{eqn:Cartan-umbilical-tensor}, and
$$Y_1= -i\nabla_1 S + \nabla_0 T_1 + \frac{i}{2}RT_1-3A_{11}T^1.
$$
The CR invariance of $Q_{11}$ then follows immediately from the CR invariance of $\kappa_{10 A}{}^B$ and the transformation law \cref{TractorTransformation}. On the other hand, $Y_1$ is not CR invariant, rather the transformation law \cref{TractorTransformation} implies that if $\thetah = e^{\Ups}\theta$ then $\hat{Y}_1 = Y_1 - Q_{11}\Upsilon^1$. (The pair $Q_{11}$ and $Y_1$, respectively, are highly analogous to the Weyl curvature and Cotton tensor in $4$-dimensional conformal geometry.) Since the tractor connection preserves the tractor metric we have $\kappa_{\oneb 0 A}{}^B = - h_{A\bar{D}} h^{B\bar{C}} \overline{\kappa_{10 C}{}^D}$, giving
\begin{equation}\label{eqn:tractor-curvature-oneb0}
\ka_{\oneb 0 A}{}^B v^A \overset{\theta}{=} \left(
\begin{array}{ccc}
0 & 0 & 0\\
iQ_{\oneb}{}^1 & 0 & 0 \\
-Y_{\oneb} & 0 & 0
\end{array}
\right)
\left(\begin{array}{c}
\sigma\\
\mu^1\\
\rho
\end{array}\right)
\end{equation}
where $Y_{\oneb} = \overline{Y_1}$.

\subsection{The adjoint tractor bundle and the obstruction as a divergence}
Let $\mathbb{C}^{2,1}$ denote $\mathbb{C}^3$ equipped with the signature $(2,1)$ Hermitian inner product
\begin{equation*}
\langle (z_0,z_1,z_2), (w_0,w_1,w_2)\rangle = z_0\overline{w_2} + z_1\overline{w_1} + z_2\overline{w_0}
\end{equation*}
chosen so that the standard first and last basis vectors are isotropic. Let $G=\mathrm{SU}(2,1)$ be the linear group preserving the inner product, with Lie algebra
\begin{equation*}
\mathfrak{su}(2,1)=\left\{
\left(
\begin{array}{ccc}
a & z & i\phi\\
w & -2i\,\mathrm{Im}\, a  & -\bar{z} \\
i\psi & -\bar{w} & -\bar{a}
\end{array}
\right) : \phi,\psi\in \mathbb{R},\; a,z,w\in \mathbb{C} \right\}.
\end{equation*}

The adjoint tractor bundle is the bundle induced from $\cG$ by the adjoint representation of $G$ on its Lie algebra $\mathbb{V}=\fg$. Since $\fg$ consists of the trace-free skew-Hermitian endomorphisms of $\mathbb{C}^{2,1}$, the adjoint tractor bundle $\cA\to M$ is the subbundle of $\mathrm{End}(\cT)$ consisting of trace-free skew-Hermitian endomorphisms of $\cT$. A section $s\in \Gamma(\cA)$ may be written with respect to a choice of contact form $\theta$ as
\begin{equation*}
 s_A{}^B \overset{\theta}{=}\left(\begin{array}{ccc}
\mu & \upsilon_1 & iu\\
\nu^1 & -2i \mathrm{Im}\mu& -\upsilon^1 \\
i\lambda & -\nu_1 & -\overline{\mu}
\end{array}\right).
\end{equation*}
If $\thetah=e^{\Ups}\theta$ then
\begin{equation*}
 s_A{}^B \overset{\thetah}{=}
\left(\!
\begin{array}{ccc}
1 & 0 & 0\\
\Ups^{1} & 1 & 0 \\
-\frac{1}{2}(\Ups^{1}\Ups_{1}-i\Ups_0) & - \Ups_{1} & 1
\end{array}
\right) \!
 \left(\begin{array}{ccc}
\mu & \upsilon_1 & iu\\
\nu^1 & -2i \mathrm{Im}\mu& -\upsilon^1 \\
i\lambda & -\nu_1 & -\overline{\mu}
\end{array}\right)\!
\left(\!
\begin{array}{ccc}
1 & 0 & 0\\
-\Ups^{1} & 1 & 0 \\
-\frac{1}{2}(\Ups^{1}\Ups_{1}+i\Ups_0) & \Ups_{1} & 1
\end{array}
\right).
\end{equation*}

If a section of $\cA$ is given by
\begin{equation*}
 s_A{}^B \overset{\theta}{=}\left(\begin{array}{ccc}
\mu & \upsilon_1 & iu\\
\nu^1 & *& * \\
i\lambda & * & *
\end{array}\right)
\end{equation*}
then
\begin{equation}\label{eqn:adjoint-tractor-connection-1}
 \nabla_1 s_A{}^B \overset{\theta}{=}\left(\begin{array}{ccc}
\nabla_1\mu - \frac{1}{4}R\ups_1 + iuT_1 & \nabla_1\ups_1 - A_{11}u & i\nabla_1 u - \ups_1\\
\nabla_1\nu^1 + i\lambda + \frac{1}{4}R(2\mu-\overline{\mu})-\ups^1T_1 & * & * \\
i\nabla_1\lambda - iA_{11}\nu^1 +\frac{1}{4}R\nu_1 -(\mu+\overline{\mu})T_1 & * & *
\end{array}\right)
\end{equation}
\begin{equation}\label{eqn:adjoint-tractor-connection-oneb}
 \nabla_{\oneb} s_A{}^B \overset{\theta}{=}\left(\begin{array}{ccc}
\nabla_{\oneb}\mu - \nu_{\oneb} + iA_{\oneb}{}^1\ups_1-iuT_{\oneb} & \nabla_{\oneb}\ups_1 + (2\mu-\overline{\mu})\bh_{1\oneb}+\frac{i}{4}uR\bh_{1\oneb} & i\nabla_{\oneb} u + \ups_{\oneb}\\
\nabla_{\oneb}\nu^1 - i(2\mu-\overline{\mu})A_{\oneb}{}^1 +\ups^1T_{\oneb}& * & * \\
i\nabla_{\oneb}\lambda -\frac{1}{4}R\nu_{\oneb} -iA_{\oneb}{}^1\nu_1 +(\mu+\overline{\mu})T_{\oneb}& * & *
\end{array}\right)
\end{equation}
\begin{equation}
 \nabla_0 s_A{}^B \overset{\theta}{=}\!\left(\!\!\begin{array}{ccc}
\nabla_0\mu -\lambda + 2i \ups_1T^1 - Su & \nabla_0\ups_1 \!-\! \frac{i}{4}R\ups_1 \!-\! i\nu_1 \!-\! 2uT_1 & i\nabla_0 u \!-\! i(\mu\!+\!\overline{\mu})\\
\nabla_0\nu^1 +\frac{i}{4}R\nu^1 - 2i(2\mu-\overline{\mu}) T^1 -iS\ups^1& * & * \\
i\nabla_0\lambda -2i(T_1\nu^1+ T^1\nu_1) - iS(\mu+\overline{\mu})  & * & *
\end{array}\!\!\right)
\end{equation}

The tractor curvature $\kappa$ satisfies the Bianchi identity, $\dee^{\nabla}\kappa=0$, which can be written in terms of the components as $\nabla_{1}\kappa_{\oneb 0A}{}^B - \nabla_{\oneb} \kappa_{10A}{}^B = 0$ (i.e.\  $\nabla^1\kappa_{10A}{}^B = \nabla^{\oneb}\kappa_{\oneb 0A}{}^B$).
\begin{lemma}\label{lem:obstruction-flat-as-divergence}
Let $(M,H,J)$ be a CR $3$-manifold. Then the CR obstruction density $\mathcal{O}$ vanishes if and only if $\nabla^1\kappa_{10A}{}^B=0$ (equivalently $\nabla^{\oneb}\kappa_{\oneb 0A}{}^B=0$).
\end{lemma}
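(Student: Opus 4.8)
The plan is to establish the equivalence by computing the divergence $\nabla^{1}\ka_{10A}{}^{B}$ of the CR tractor curvature in a pseudohermitian frame and identifying it, after simplification, with a fixed nonzero multiple of the obstruction density. Fix a contact form $\theta$ and an admissible coframe $(\theta,\theta^{1},\theta^{\oneb})$. By \eqref{eqn:tractor-curvature} the curvature component $\ka_{10A}{}^{B}$ is, in this frame, the nilpotent endomorphism of $\cT$ sending $v^{A}\overset{\theta}{=}(\sigma,\mu^{1},\rho)$ to $(0,0,\,\sigma Y_{1}+i\mu^{1}Q_{11})$; in particular it maps $\cT$ into the canonical isotropic line $\cT^{1}$, annihilates $\cT^{0}$, and splits into a graded piece carried by $Y_{1}$ (in $\operatorname{Hom}(\cT/\cT^{0},\cT^{1})$) and one carried by $Q_{11}$ (in $\operatorname{Hom}(\cT^{0}/\cT^{1},\cT^{1})$). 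I would then apply the tractor connection to $\ka_{10A}{}^{B}$, using that on $\operatorname{End}(\cT)$ it acts by the commutator with the connection matrix of \eqref{TractorConnection1}--\eqref{TractorConnection0} coupled with the Tanaka--Webster connection on the form indices $1,0$ (equivalently, reading off the appropriate rows of the adjoint-tractor formulae \eqref{eqn:adjoint-tractor-connection-1}--\eqref{eqn:adjoint-tractor-connection-oneb}). Differentiating in the direction $\theta^{\oneb}$, raising with $\bh^{1\oneb}$ and contracting the form index then yields $\nabla^{1}\ka_{10A}{}^{B}$ as an explicit endomorphism whose entries involve $\nabla^{1}Q_{11}$, $\nabla^{1}\nabla^{1}Q_{11}$, $\nabla^{1}Y_{1}$, and algebraic curvature couplings ($A_{11}$, $R$, $T_{1}$, $S$, $Q_{11}$).

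The next step is to collapse this using the pseudohermitian Bianchi identity \eqref{eqn:pseudohermitian-Bianchi}. Substituting the explicit formula $Y_{1}=-i\nabla_{1}S+\nabla_{0}T_{1}+\tfrac{i}{2}RT_{1}-3A_{11}T^{1}$ together with the definitions \eqref{eqn:T_1}, \eqref{eqn:S} of $T_{1}$ and $S$, commuting covariant derivatives on densities by \eqref{eqn:commuting-11bar-derivatives-on-densities}--\eqref{eqn:commuting-10-derivatives-on-densities}, and applying \eqref{eqn:pseudohermitian-Bianchi}, all the $Y_{1}$-dependent contributions and the first-order terms $\nabla^{1}Q_{11}$ cancel, leaving
\[
\nabla^{1}\ka_{10A}{}^{B}=c\bigl(\nabla^{1}\nabla^{1}Q_{11}-iA^{11}Q_{11}\bigr)\,N_{A}{}^{B}
\]
for a nonzero real constant $c$ and a fixed nowhere-vanishing nilpotent section $N_{A}{}^{B}$ of $\operatorname{End}(\cT)$ (the canonical generator of $\operatorname{Hom}(\cT/\cT^{0},\cT^{1})$ up to weight). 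Since $\nabla^{1}\nabla^{1}Q_{11}-iA^{11}Q_{11}=3\mathcal{O}$ by \eqref{eqn:obstruction-density}, it follows that $\nabla^{1}\ka_{10A}{}^{B}$ vanishes identically if and only if $\mathcal{O}$ does. The equivalent statement for $\nabla^{\oneb}\ka_{\oneb0A}{}^{B}$ is the complex conjugate of this (recall $\mathcal{O}$ is real) and also follows from the second Bianchi identity $\nabla^{1}\ka_{10A}{}^{B}=\nabla^{\oneb}\ka_{\oneb0A}{}^{B}$ recorded before the lemma.

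The main obstacle is the cancellation in the second step: one must verify that, once the explicit expression for $Y_{1}$ is inserted, nothing survives in $\nabla^{1}\ka_{10A}{}^{B}$ except the term $\nabla^{1}\nabla^{1}Q_{11}-iA^{11}Q_{11}$. This is the heaviest part of the argument and is the exact CR counterpart of the four-dimensional conformal identity expressing the Bach tensor as a constant multiple of the divergence of the (Cotton-corrected) Weyl curvature, i.e.\ as the Yang--Mills current of the tractor connection; it works because normality of the CR Cartan connection makes $Y_{1}$ a prescribed function of $Q_{11}$ and lower-order pseudohermitian data. Two simplifications make it manageable: first, organizing the computation along the tractor filtration $\cT^{1}\subset\cT^{0}\subset\cT$ shows a priori that $\nabla^{1}\ka_{10A}{}^{B}$ lands in $\operatorname{Hom}(\cT/\cT^{1},\cT^{1})$, and an argument parallel to the one showing $\ka_{10A}{}^{B}$ is CR invariant shows $\nabla^{1}\ka_{10A}{}^{B}$ is CR invariant as well, so that by uniqueness of the obstruction density (Graham) it must have the stated form with only the constant $c$ to be determined; second, that constant can be fixed by evaluating both sides at the origin for a real hypersurface in Chern--Moser normal form, exactly as in Remark \ref{remark:obstruction-formula}, giving $c\neq0$.
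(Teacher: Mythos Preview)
Your overall strategy is sound and would succeed, but the paper finds a much shorter route that you have overlooked. Instead of substituting the explicit formula $Y_{1}=-i\nabla_{1}S+\nabla_{0}T_{1}+\tfrac{i}{2}RT_{1}-3A_{11}T^{1}$ and grinding through the pseudohermitian Bianchi identity \eqref{eqn:pseudohermitian-Bianchi}, the paper first uses the \emph{tractor} Bianchi identity $\nabla_{1}\kappa_{\oneb0A}{}^{B}-\nabla_{\oneb}\kappa_{10A}{}^{B}=0$. Computing both sides from \eqref{eqn:tractor-curvature}, \eqref{eqn:tractor-curvature-oneb0} and \eqref{eqn:adjoint-tractor-connection-1}--\eqref{eqn:adjoint-tractor-connection-oneb} immediately yields the single scalar identity $Y_{1}=-i\nabla^{1}Q_{11}$. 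With this in hand, the divergence computation is essentially two lines: the $(3,2)$ entry of $\nabla^{1}\kappa_{10A}{}^{B}$ is $i\nabla^{1}Q_{11}+Y_{1}=0$, and the $(3,1)$ entry is $\nabla^{1}Y_{1}-A^{11}Q_{11}=-i(\nabla^{1}\nabla^{1}Q_{11}-iA^{11}Q_{11})=-3i\mathcal{O}$.

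Your description of the cancellation is also slightly off: the $Y_{1}$-dependent terms do not simply cancel against lower-order data, they survive as $\nabla^{1}Y_{1}$ in the bottom-left slot and become $-i\nabla^{1}\nabla^{1}Q_{11}$ precisely via $Y_{1}=-i\nabla^{1}Q_{11}$. Your proposed expansion through $S$, $T_{1}$ and \eqref{eqn:pseudohermitian-Bianchi} would ultimately reprove this identity the hard way. The alternative route you sketch via CR invariance and Graham's uniqueness is a legitimate shortcut, but note that to reduce to a single scalar invariant you must first know that $\nabla^{1}\kappa_{10A}{}^{B}$ lands in $\operatorname{Hom}(\cT/\cT^{0},\cT^{1})$ rather than merely $\operatorname{Hom}(\cT/\cT^{1},\cT^{1})$, i.e.\ that the $(3,2)$ entry vanishes --- and for that you again need $Y_{1}=-i\nabla^{1}Q_{11}$.
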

\begin{proof}
Fix a background contact form $\theta$. By a straightforward calculation using \cref{eqn:tractor-curvature}, \cref{eqn:tractor-curvature-oneb0}, \cref{eqn:adjoint-tractor-connection-1} and \cref{eqn:adjoint-tractor-connection-oneb}, from the Bianchi identity $\nabla_{1}\kappa_{\oneb 0A}{}^B - \nabla_{\oneb} \kappa_{10A}{}^B = 0$ one obtains that $Y_1 = -i\nabla^1 Q_{11}$. By the same calculation, using this identity, one obtains that 
\begin{equation*}
\nabla^1\kappa_{10A}{}^B \overset{\theta}{=} 
\left(\begin{array}{ccc}
0 & 0 & 0\\
0 & 0 & 0 \\
-i(\nabla^1\nabla^1Q_{11} - iA^{11}Q_{11}) & 0 & 0
\end{array}\right).
\end{equation*}
The lemma follows immediately by \cref{eqn:obstruction-density}.
\end{proof}

\subsection{Proof of \texorpdfstring{\cref{thm:Symmetries}}{Theorem 1.3}}

The proof of \cref{thm:Symmetries} makes use of a universal prolongation formula for the infinitesimal automorphism equation in a parabolic geometry, which puts infinitesimal automorphisms in one to one correspondence with nontrivial sections of the adjoint tractor bundle satisfying a first order (prolonged) equation \cite{Cap2008}. In the statement of this result for the $3$-dimensional CR case (\cref{lem:Cap08}) we refer to the operator $L$ given by the following proposition, an example of a so called \emph{BGG-splitting operator}.
\begin{proposition}[\cite{Cap2008,CE2018-symmetries-and-deformations}]
The BGG-splitting operator $L:\cE(1,1)\to\cA$ is given by
\begin{equation*}
L u \overset{\theta}{=}\left(\begin{array}{ccc}
\mu & \upsilon_1 & iu\\
\nu^1 & *& * \\
i\lambda & * & *
\end{array}\right)
\end{equation*}
where $\ups_1=i\nabla_1 u$, $\mu=\frac{1}{3}(\nabla_0 u - \nabla^1\ups_1 -\frac{i}{4}uR )$, $\nu^1=\frac{1}{3}(i\nabla_0\ups^1 +2\nabla^1\mu-\nabla^1\overline{\mu}+2iA^{11}\ups_1-3iuT^1)$, and $\lambda=\frac{1}{4i}(2i\nabla_0\mathrm{Re}\mu \,\!+\! \nabla^1\nu_1\!-\!\nabla_1\nu^1 \!-\!\frac{3i}{2}R\,\mathrm{Im}\mu \!+\! 3(\ups^1T_1\!-\!\ups_1T^1) \!-\! 2iSu)$.
\end{proposition}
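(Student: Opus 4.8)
The plan is to derive the formulas by running the standard BGG prolongation for the adjoint tractor bundle. Recall that the first BGG-splitting operator $L\colon\mathcal{H}_0\to\cA$ is the unique differential operator satisfying $\pi\circ L=\mathrm{id}$ and $\partial^*\big(\nabla^{\cA}(Lu)\big)=0$ for all $u$, where $\nabla^{\cA}$ is the adjoint tractor connection of \cref{eqn:adjoint-tractor-connection-1} and \cref{eqn:adjoint-tractor-connection-oneb} (together with its $\nabla_0$ analogue), $\partial^*$ is the Kostant codifferential on $\cA$-valued $1$-forms, and $\pi\colon\cA\to\mathcal{H}_0$ is the projection onto the bottom piece of the Lie algebra homology. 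For the adjoint representation of $\mathrm{SU}(2,1)$ one has $\mathcal{H}_0=\cA/\mathrm{im}\big(\partial^*\colon T^*M\otimes\cA\to\cA\big)\cong\cE(1,1)$; in the matrix presentation of a section $s_A{}^B$ of $\cA$ the projection $\pi$ is the extraction of the $iu$-entry, because $\mathrm{im}\,\partial^*$ corresponds algebraically to $[\mathfrak{p}_+,\mathfrak{su}(2,1)]=\mathfrak{g}_{-1}\oplus\mathfrak{g}_0\oplus\mathfrak{g}_1\oplus\mathfrak{g}_2$, the complement of $\mathfrak{g}_{-2}\cong\cE_{\mathbb{R}}(1,1)$.

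First I would make $\partial^*$ explicit: with respect to a contact form an $\cA$-valued $1$-form may be written $\Phi_1\theta^1+\Phi_{\oneb}\theta^{\oneb}+\Phi_0\btheta$ with each $\Phi_\bullet\in\Gamma(\cA)$, and $\partial^*$ acts by bracketing the form index, viewed via the Levi form as an element of $\mathfrak{p}_+$, into the $\cA$-value; thus $\partial^*\Phi$ is a fixed linear combination of $[\boldsymbol{e}_1,\Phi_1]$ and $[\boldsymbol{e}_{\oneb},\Phi_{\oneb}]$ (with $\boldsymbol{e}_1,\boldsymbol{e}_{\oneb}\in\mathfrak{g}_1^{\mathbb{C}}$) and $[\boldsymbol{e}_0,\Phi_0]$ (with $\boldsymbol{e}_0\in\mathfrak{g}_2^{\mathbb{C}}$). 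Next I would substitute the matrix ansatz of the statement, with $u$ prescribed and $\upsilon_1,\mu,\nu^1,\lambda$ (and hence, via the skew-Hermitian trace-free structure of $\cA$, the remaining entries) regarded as unknowns, compute $\nabla^{\cA}(Lu)$ from the three connection formulas, apply $\partial^*$, and impose $\partial^*\nabla^{\cA}(Lu)=0$. Decomposing this identity into its $\mathfrak{g}_j$-components gives a triangular linear system, solved in order of increasing $j$: the $\mathfrak{g}_{-1}$-component forces $\upsilon_1=i\nabla_1u$; the $\mathfrak{g}_0$-component then determines $\mu$ in terms of $\upsilon_1$ and $u$, the overall factor $\tfrac13$ and the term $-\tfrac{i}{4}uR$ arising from the $\tfrac14 R$ and $\bh_{1\oneb}$ terms in \cref{eqn:adjoint-tractor-connection-1} and \cref{eqn:adjoint-tractor-connection-oneb} and in the $\nabla_0$ formula, together with the Kostant normalisation on $\mathfrak{g}_0$; the $\mathfrak{g}_1$-component then determines $\nu^1$ in terms of $\mu$, $\upsilon_1$ and $u$; and the $\mathfrak{g}_2$-component determines $\lambda$. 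Standard BGG theory \cite{Cap2008} guarantees a priori that this system is consistent and has a unique solution, so collecting terms yields precisely the stated $\upsilon_1$, $\mu$, $\nu^1$ and $\lambda$.

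The difficulty is entirely in the bookkeeping rather than in any conceptual point: one must carry the pseudohermitian curvature quantities $R$, $A_{11}$, $T_1$ and $S$ through all three covariant directions, repeatedly commute covariant derivatives of weighted densities and holomorphic vectors using \cref{eqn:commuting-11bar-derivatives-on-densities} and \cref{eqn:commuting-10-derivatives-on-densities} (and the Bianchi consequence $Y_1=-i\nabla^1Q_{11}$ recorded in the proof of \cref{lem:obstruction-flat-as-divergence}), reconstruct the entries marked $*$ in \cref{eqn:adjoint-tractor-connection-1} and \cref{eqn:adjoint-tractor-connection-oneb} from the reality conditions, and pin down the Kostant normalisation constants (the $\tfrac13$ and the $\tfrac1{4i}$). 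As a cross-check, and as an alternative derivation, one may instead specialise the universal prolongation formula for parabolic geometries from \cite{Cap2008} to the $3$-dimensional CR case, as is carried out in \cite{CE2018-symmetries-and-deformations}.
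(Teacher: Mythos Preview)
The paper does not give its own proof of this proposition; it is simply stated with attribution to \cite{Cap2008,CE2018-symmetries-and-deformations} and then used. Your sketch is the correct and standard way to derive such formulas, and is essentially what one would find (or carry out) in the cited references: impose $\partial^*\big(\nabla^{\cA}Lu\big)=0$ with the top slot $iu$ prescribed, and solve the resulting triangular system in order of increasing homogeneity to determine $\upsilon_1$, $\mu$, $\nu^1$, $\lambda$ successively.

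One small correction: the identity $Y_1=-i\nabla^1Q_{11}$ is not needed in this computation. The splitting operator is determined by the purely algebraic condition $\partial^*\nabla^{\cA}(Lu)=0$, which involves only the connection coefficients $R$, $A_{11}$, $T_1$, $S$ already present in the formulas for $\nabla^{\cA}$; the tractor curvature $\kappa$ and its Bianchi identity do not enter. Each step expresses the new unknown as a combination of first derivatives of the previously determined entries together with lower-order curvature terms, so no commutation of covariant derivatives (and certainly no curvature identity) is required to reach the stated formulas. The identity $Y_1=-i\nabla^1Q_{11}$ becomes relevant only later, in \cref{lem:obstruction-flat-as-divergence}, where one computes $\nabla^1\kappa_{10A}{}^B$.
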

The following lemma, which is part (1) of the proposition of section 3.2 of \cite{Cap2008}, gives the prolonged system corresponding to the CR infinitesimal automorphism equation.
\begin{lemma}\label{lem:Cap08}
Let $(M,H,J)$ be a CR $3$-manifold. The vector field $X$ is an infinitesimal CR symmetry if and only if $\nabla s = -X\,\hook\, \kappa$, where $s=L u$ with $u=\btheta(X)$.
\end{lemma}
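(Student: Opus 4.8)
The plan is to obtain \cref{lem:Cap08} as the $3$-dimensional CR specialisation of \v{C}ap's general description of infinitesimal automorphisms of parabolic geometries \cite{Cap2008}, translated into the tractor calculus set up above; I would organise it in three steps. First, I would invoke the equivalence of categories between CR $3$-manifolds and normal parabolic geometries of type $(G,P)$ \cite{CapSlovak2009}: under this equivalence, infinitesimal CR symmetries of $(M,H,J)$ correspond bijectively to infinitesimal automorphisms of the canonical Cartan geometry $(\cG\to M,\omega)$, i.e.\ to vector fields $X$ on $M$ whose lift $\tilde X$ to $\cG$ satisfies $\mathcal{L}_{\tilde X}\omega=0$.

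Next, using $\omega$ to trivialise $T\cG$, I would identify a $P$-invariant vector field $\tilde X$ on $\cG$ with a section $s=\omega(\tilde X)$ of the adjoint tractor bundle $\cA$, its underlying vector field being $X=\Pi(s)$ under $\cA\to\cA/\cA^0=TM$. Expanding $\mathcal{L}_{\tilde X}\omega$ by Cartan's formula and using the structure equation $K=\dee\omega+\omega\wedge\omega$ then turns $\mathcal{L}_{\tilde X}\omega=0$ into the first order equation $\nabla s=-\iota_{\Pi(s)}\kappa=-X\,\hook\,\kappa$ on $\cA$, with $\nabla$ the adjoint tractor connection. Composing $\Pi\colon\cA\to TM$ with the tautological map $\btheta\colon TM\to TM/H=\cE_{\mathbb{R}}(1,1)$ identifies the projection of $s$ onto the first BGG bundle with $u:=\btheta(X)$.

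It would then remain to see that the section $s$ attached to an infinitesimal automorphism is exactly $Lu$ for the BGG-splitting operator $L$ of the preceding proposition; by the characterising property of $L$ this reduces to checking $\partial^*(\nabla s)=0$, where $\partial^*$ is the Kostant codifferential on $\cA$-valued forms, after which uniqueness of the splitting forces $s=Lu$ and the equation reads $\nabla(Lu)=-X\,\hook\,\kappa$, as asserted; conversely, reading off one component of $\nabla(Lu)=-X\,\hook\,\kappa$ recovers the CR infinitesimal automorphism equation for $X$. I expect this third step to be the main obstacle: it is where normality of $\omega$ enters, through $\partial^*\kappa=0$, and since $\partial^*$ does not commute with $\iota_X$ one must run \v{C}ap's homogeneity argument \cite[\S3.2]{Cap2008}, exploiting the fact that in the $3$-dimensional CR case the full curvature reduces to the single component $\ka_{10 A}{}^B$ of \cref{eqn:tractor-curvature}, concentrated in the bottom filtration slot.

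Alternatively — since the adjoint tractor connection (\cref{eqn:adjoint-tractor-connection-1,eqn:adjoint-tractor-connection-oneb} and its $\nabla_0$ analogue), the tractor curvature \cref{eqn:tractor-curvature,eqn:tractor-curvature-oneb0}, and the operator $L$ have all been written out explicitly above — one can verify \cref{lem:Cap08} by a direct component computation. Writing $X=V_u=uT+iu^1Z_1-iu^{\oneb}Z_{\oneb}$, one finds that $-X\,\hook\,\kappa$ has components proportional to $u\,\ka_{10 A}{}^B$, $u\,\ka_{\oneb 0 A}{}^B$ and $-iu^1\ka_{10 A}{}^B+iu^{\oneb}\ka_{\oneb 0 A}{}^B$ along $\theta^1,\theta^{\oneb}$ and $\theta$ respectively; comparing these with the components of $\nabla(Lu)$, and using the commutation formulae \cref{eqn:commuting-11bar-derivatives-on-densities,eqn:commuting-10-derivatives-on-densities}, one checks that the system $\nabla(Lu)=-X\,\hook\,\kappa$ collapses to the single scalar equation $\DJ u=0$, that is, to $X$ being an infinitesimal CR symmetry. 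This route is elementary but hides the conceptual reason for the statement.
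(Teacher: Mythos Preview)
Your proposal is correct, and in fact goes well beyond what the paper does: the paper gives no proof at all of \cref{lem:Cap08}, but simply states it as ``part (1) of the proposition of section 3.2 of \cite{Cap2008}'' specialised to the $3$-dimensional CR case. Your first three steps are an accurate outline of how \v{C}ap's general argument runs, and your identification of the third step (showing $\partial^*(\nabla s)=0$ so that $s=Lu$) as the point where normality enters is exactly right. Your alternative direct computation is also viable and is in the spirit of the explicit presentation of tractor calculus in this section, though the paper does not pursue it; either route would constitute a genuine proof where the paper only gives a reference.
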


\begin{proof}[Proof of \cref{thm:Symmetries}]
Suppose $\mathcal{O}=0$. Fix a contact form $\theta$ and let $\nabla$ denote the Tanaka-Webster connection of $\theta$ coupled with the CR tractor connection. By \cref{lem:obstruction-flat-as-divergence}, since $\mathcal{O}=0$ we have $\nabla^{\oneb}\kappa_{\oneb0A}{}^B=0$. If $X$ is an infinitesimal CR symmetry then by \cref{lem:Cap08}, since $\kappa_{1\oneb A}{}^B=0$, we have $\nabla_1 s_A{}^B = u\kappa_{10A}{}^B$, where $u = \btheta(X)$. Note that $s_A{}^{C}s_B{}^A\,\nabla^{\oneb}\kappa_{\oneb 0C}{}^B = s_A{}^C \,(\nabla^{\oneb}\kappa_{\oneb 0B}{}^A) \,s_C{}^B$ is a density of weight $(-2,-2)$ so can be invariantly integrated. Integrating by parts we obtain
\begin{align}\label{eqn:int-by-parts}
0=\int s_A{}^C \,(\nabla^{\oneb}\kappa_{\oneb 0B}{}^A) \,s_C{}^B & = - \int \left( (\nabla^{\oneb}s_A{}^C)\,\kappa_{\oneb 0B}{}^A \,s_C{}^B + s_A{}^C \,\kappa_{\oneb 0B}{}^A \,\nabla^{\oneb} s_C{}^B \right)\\
\nonumber &= - \int u h^{1\oneb} \left( \kappa_{10A}{}^C \,\kappa_{\oneb 0B}{}^A\, \,s_C{}^B + s_A{}^C \,\kappa_{\oneb 0B}{}^A \,\kappa_{10C}{}^B \right).
\end{align}
Now by \cref{eqn:tractor-curvature} and \cref{eqn:tractor-curvature-oneb0} we have
\begin{equation*}
h^{1\oneb}\kappa_{\oneb 0B}{}^A \,\kappa_{10C}{}^B  \overset{\theta}{=}
\left(
\begin{array}{ccc}
0 & 0 & 0\\
iQ_{\oneb}{}^1 & 0 & 0 \\
-Y_{\oneb} & 0 & 0
\end{array}
\right)
\left(
\begin{array}{ccc}
0 & 0 & 0\\
0 & 0 & 0 \\
Y^{\oneb} & iQ_{1}{}^{\oneb} & 0
\end{array}
\right)
=\left(
\begin{array}{ccc}
0 & 0 & 0\\
0 & 0 & 0 \\
0 & 0 & 0
\end{array}
\right)
\end{equation*}
and 
\begin{equation*}
h^{1\oneb}\kappa_{10A}{}^C \,\kappa_{\oneb 0B}{}^A  \overset{\theta}{=}
\left(
\begin{array}{ccc}
0 & 0 & 0\\
0 & 0 & 0 \\
Y^{\oneb} & iQ_{1}{}^{\oneb} & 0
\end{array}
\right)
\left(
\begin{array}{ccc}
0 & 0 & 0\\
iQ_{\oneb}{}^1 & 0 & 0 \\
-Y_{\oneb} & 0 & 0
\end{array}
\right)
=\left(
\begin{array}{ccc}
0 & 0 & 0\\
0 & 0 & 0 \\
- |Q|^2 & 0 & 0
\end{array}
\right)
\end{equation*}
where $|Q|^2= Q_{11}Q^{\oneb\oneb}$. Hence \cref{eqn:int-by-parts} simplifies to
\begin{equation*}
\int u h^{1\oneb}\kappa_{10A}{}^C \,\kappa_{\oneb 0B}{}^A\, \,s_C{}^B = 0
\end{equation*}
and we have
\begin{align*}
h^{1\oneb}\kappa_{10A}{}^C \,\kappa_{\oneb 0B}{}^A\, \,s_C{}^B  \,&\overset{\theta}{=}\,
\mathrm{tr} \, \left(
\begin{array}{ccc}
0 & 0 & 0\\
0 & 0 & 0 \\
- |Q|^2 & 0 & 0
\end{array}
\right) \left(\begin{array}{ccc}
\mu & \upsilon_1 & iu\\
\nu^1 & -2i \mathrm{Im}\mu& -\upsilon^1 \\
i\lambda & -\nu_1 & -\overline{\mu}
\end{array}\right) \\
&=\, \mathrm{tr} \left(
\begin{array}{ccc}
0 & 0 & 0\\
0 & 0 & 0 \\
-\mu |Q|^2  & -\ups_1|Q|^2 & -iu|Q|^2
\end{array}
\right) 
= -iu|Q|^2.
\end{align*}
We conclude that $\int u^2|Q|^2 =0$. Since $u$ cannot vanish on an open set, the result follows.
\end{proof}

\bibliographystyle{abbrv}

\newcommand{\noopsort}[1]{}

\end{document}